\documentclass[preprint,12pt]{elsarticle}

\usepackage{amsfonts, amsmath, amscd}
\usepackage[psamsfonts]{amssymb}

\usepackage{amssymb}





\newtheorem{theorem}{Theorem}
\newtheorem{lemma}[theorem]{Lemma}
\newdefinition{rmk}{Remark}
\newproof{proof}{Proof}
\newtheorem{pro}[theorem]{Proposition}
\newtheorem{defin}[theorem]{Definition}
\newtheorem{cor}[theorem]{Corollary}
\newtheorem{problem}[theorem]{Problem}
\newdefinition{exa}{Example}

\numberwithin{equation}{section}
\numberwithin{theorem}{section}
\numberwithin{exa}{section}

\begin{document}

\begin{frontmatter}

\title{Pontryagin duality for Abelian $s$- and $sb$-groups\tnoteref{label1}}\tnotetext[label1]{The author was partially supported  by Israel Ministry of Immigrant Absorption}

\author{S.S. Gabriyelyan}
\ead{saak@math.bgu.ac.il}

\address{Department of Mathematics, Ben-Gurion University of the
Negev, Beer-Sheva, P.O. 653, Israel}

\begin{abstract}
The main goal of the article is to study the Pontryagin duality for Abelian $s$- and $sb$-groups.  Let $G$ be an infinite Abelian group and $X$ be the dual group of the discrete group $G_d$. We show that a dense subgroup $H$ of $X$ is $\mathfrak{g}$-closed iff $H$ algebraically is the dual group of $G$ endowed with  some maximally almost periodic $s$-topology. Every reflexive Polish Abelian group is $\mathfrak{g}$-closed in its Bohr compactification. If a $s$-topology $\tau$ on a countably infinite Abelian group $G$ is generated by a countable set of convergent sequences, then the dual group of $(G,\tau)$ is Polish. A non-trivial Hausdorff Abelian topological group is a $s$-group iff it is a quotient group of the $s$-sum of a family of copies of $(\mathbb{Z}^\mathbb{N}_0, \mathbf{e})$.

\end{abstract}

\begin{keyword}

$T$-sequence\sep  $TB$-sequence\sep  Abelian group\sep  $s$-group\sep  $sb$-group\sep  dual group\sep  $\mathfrak{g}$-closed subgroup\sep  sequentially covering map

\MSC[2008] 22A10 \sep 22A35 \sep 43A05 \sep 43A40
\end{keyword}

\end{frontmatter}

\section{Introduction}

{\bf I. Notations and preliminaries result.}  A group $G$ with the discrete topology is denoted by $G_{d}$. The subgroup generated by a subset $A$ of $G$ is denoted by $\langle A\rangle$. Let $X$ be an Abelian topological group. A basis of open neighborhoods at zero of  $X$ is denoted by $\mathcal{U}_X$.  The group of all continuous characters on $X$ is denoted by $\widehat{X}$. $\widehat{X}$ endowed with the compact-open topology is denoted by $X^{\wedge}$.  Denote by $\mathbf{n}(X) = \cap_{\chi\in \widehat{X}} {\rm ker} \chi$ the von Neumann radical of $X$. If $\mathbf{n}(X) = \{ 0\}$, $X$ is called maximally almost periodic ($MAP$).

Let $X$ be an Abelian topological group and $\mathbf{u} =\{ u_n \}$ be a sequence of elements of $\widehat{X}$. Following Dikranjan et al. \cite{DMT}, we denote by $s_{\mathbf{u}} (X)$ the set of all $x\in X$ such that $(u_n , x)\to 1$. Let $H$ be a subgroup of $X$. If $H=s_{\mathbf{u}} (X)$ we say that $\mathbf{u}$ {\it characterizes} $H$ and that $H$ is {\it characterized} (by $\mathbf{u}$) \cite{DMT}. Let $X$ be a metrizable compact Abelian group. By \cite[Corollary 1]{Ga1}, each characterized subgroup $H= s_{\mathbf{u}} (X)$ admits a locally quasi-convex Polish group topology. We denote the group $H=s_{\mathbf{u}} (X)$  with this topology by $H_{\mathbf{u}}$.

Let $H$ be a subgroup of an Abelian topological group $X$. Following \cite{DMT}, the closure operator $\mathfrak{g}_X$ is defined as follows
\[
\mathfrak{g} (H)= \mathfrak{g}_X (H) := \bigcap_{ \mathbf{u} \in \widehat{X}^\mathbb{N} } \left\{ s_{\mathbf{u}} (X) : \; H \leq s_{\mathbf{u}} (X) \right\},
\]
and we say that $H$ is $\mathfrak{g}$-{\it closed} if $H=\mathfrak{g}(H)$. For an arbitrary subset $S$ of $\widehat{X}^\mathbb{N}$, one puts
\[
s_S (X) := \bigcap_{\mathbf{u}\in S} s_\mathbf{u} (X).
\]

Let $\mathbf{u}=\{ u_n\}$ be a non-trivial sequence in an Abelian  group $G$. The following  very important questions has been studied by many authors as  Graev \cite{Gra}, Nienhuys \cite{Nie}, and others:
\begin{problem} \label{prob1}
{\it Is there a Hausdorff group topology $\tau$ on $G$ such that $u_n \to 0$ in $(G,\tau)$}?
\end{problem}
Protasov and  Zelenyuk \cite{ZP1, ZP2} obtained a criterion that gives the complete answer to this question. Following  \cite{ZP1}, we say that a sequence $\mathbf{u} =\{ u_n \}$ in a group $G$ is a $T$-{\it sequence} if there is a Hausdorff group topology on $G$ in which $u_n $ converges to $0$. The group $G$ equipped with the finest Hausdorff group topology $\tau_\mathbf{u}$ with this property is denoted by $(G, \mathbf{u})$.  A $T$-sequence $\mathbf{u}=\{ u_n\}$ is called trivial if there is $n_0$ such that $u_n =0$ for every $n\geq n_0$.

Let $G$ be a countably infinite Abelian group, $X=G^\wedge_d$, $\mathbf{u} =\{ u_n \}$ be a $T$-sequence in $G$ and $H=s_\mathbf{u} (X)$. There is a simple dual connection between the groups $(G, \mathbf{u})$ and $H_\mathbf{u}$, and, moreover, we can compute the von Neumann radical $\mathbf{n} (G,\mathbf{u})$ of $(G,\mathbf{u})$ as follows:
\begin{theorem} \label{t01} {\rm \cite{Ga1}}
 $(G, \mathbf{u})^{\wedge} = H_{\mathbf{u}}$ and, algebraically, $\mathbf{n} (G,\mathbf{u}) = H^\perp$.
\end{theorem}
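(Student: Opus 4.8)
The plan has two layers: first pin down the abstract group $\widehat{(G,\mathbf{u})}$, and then identify its compact‑open topology with that of $H_{\mathbf{u}}$. For the algebraic part I would exploit the Protasov--Zelenyuk description of $\tau_{\mathbf{u}}$. Since $\mathbf{u}$ is a $T$-sequence, the supremum $\tau^{*}$ of \emph{all} group topologies on $G$ in which $u_n\to 0$ is again such a topology (convergence of a fixed sequence is inherited by a supremum of topologies), and it is Hausdorff because it refines at least one Hausdorff topology with that property; hence $\tau^{*}=\tau_{\mathbf{u}}$. Now every character of the topological group $(G,\mathbf{u})$ is, in particular, a character of $G_d$, i.e. an element of $X=\widehat{G_d}$. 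If $\chi\in\widehat{(G,\mathbf{u})}$ then $(u_n,\chi)\to 1$ because $u_n\to 0$ in $(G,\mathbf{u})$, so $\chi\in s_{\mathbf{u}}(X)=H$; conversely, if $\chi\in H$ then the initial group topology $\tau_\chi$ that $\chi$ induces on $G$ satisfies $u_n\to 0$, so $\tau_\chi\le\tau^{*}=\tau_{\mathbf{u}}$ and $\chi$ is continuous on $(G,\mathbf{u})$. Thus $\widehat{(G,\mathbf{u})}=H$ as abstract groups, and, identifying $G$ with its canonical image in $X^{\wedge}=(G_d^{\wedge})^{\wedge}=G_d$,
\[
\mathbf{n}(G,\mathbf{u})=\bigcap_{\chi\in\widehat{(G,\mathbf{u})}}\ker\chi=\bigcap_{\chi\in H}\{\,g\in G:\chi(g)=1\,\}=H^{\perp}.
\]

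For the topological part, recall (this is essentially the content of \cite[Corollary 1]{Ga1}, applicable since $X=G_d^{\wedge}$ is compact metrizable) that the Polish topology of $H_{\mathbf{u}}$ is the initial topology on $H$ with respect to the inclusion $\iota\colon H\hookrightarrow X$ and the homomorphism $\phi\colon H\to c_{0}(\mathbb{T})$, $\phi(x)=((u_n,x))_{n}$, where $c_{0}(\mathbb{T})=\{(t_n)\in\mathbb{T}^{\mathbb N}:t_n\to 1\}$ carries the metric of uniform convergence. The easy half of the identification is that the compact‑open topology on $\widehat{(G,\mathbf{u})}$ is at least as fine as this: $\iota$ is compact‑open continuous because $X$ carries the topology of pointwise convergence on $G$, which is coarser; and $\phi$ is compact‑open continuous because $K_{0}:=\{0\}\cup\{u_n:n\in\mathbb N\}$ is compact in $(G,\mathbf{u})$ (as $u_n\to 0$), and for $\varepsilon>0$ the set $\{\chi\in H:\sup_n|(u_n,\chi)-1|<\varepsilon\}$ — the $\phi$-preimage of the uniform $\varepsilon$-ball about the constant sequence $1$ — is exactly the compact‑open neighbourhood of $0$ given by the characters carrying $K_{0}$ into that $\varepsilon$-arc.

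The reverse inequality is the main obstacle: one must show that for every compact $K\subseteq(G,\mathbf{u})$ the polar $\{\chi\in H:(k,\chi)\in B_{\varepsilon}\text{ for all }k\in K\}$ contains an $H_{\mathbf{u}}$-neighbourhood of $0$. The plan is to reduce this to the single sequence $\mathbf{u}$ by invoking the structure of compact subsets of Protasov--Zelenyuk groups (\cite{ZP1,ZP2}): each compact $K\subseteq(G,\mathbf{u})$ is contained in a finite union of translates of a bounded finite‑sum set $A_{m,k}:=\{0\}\cup\{u_{n_1}+\cdots+u_{n_j}:m\le n_1<\cdots<n_j,\ j\le k\}$. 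For such a set, if $\sup_{n\ge m}|(u_n,\chi)-1|<\varepsilon/k$ then $|(u_{n_1}+\cdots+u_{n_j},\chi)-1|<\varepsilon$ for all admissible tuples, so the $H_{\mathbf{u}}$-neighbourhood $\phi^{-1}\{(t_n):\sup_{n\ge m}|t_n-1|<\varepsilon/k\}$ lies in the polar of $A_{m,k}$; the finitely many translates and the continuity of $\iota$ already established then handle the general compact set. The delicate point is precisely the compact‑set description — that the sets $A_{m,k}$ are genuinely compact and that no further compact sets arise — which is where I expect the real work to be; granting it, combining the two halves yields $(G,\mathbf{u})^{\wedge}=H_{\mathbf{u}}$ as topological groups (and, as a by‑product, that $(G,\mathbf{u})^{\wedge}$ is Polish).
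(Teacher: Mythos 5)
This theorem is not proved in the paper at all: it is imported verbatim from \cite{Ga1}, so there is no internal proof to compare against. Judged on its own terms, your reconstruction is correct and uses exactly the ingredients the paper itself trades on. The algebraic half is sound: identifying $\tau_{\mathbf u}$ with the supremum of all group topologies in which $u_n\to 0$ (Hausdorff because it refines one Hausdorff member) legitimately gives both inclusions $\widehat{(G,\mathbf u)}\subseteq s_{\mathbf u}(X)$ and, via the initial topology of a single character, the converse; the computation $\mathbf n(G,\mathbf u)=H^\perp$ then follows. For the topological half, your description of the $H_{\mathbf u}$-topology as the initial topology of $\iota\colon H\hookrightarrow X$ and $\phi\colon H\to c_0(\mathbb T)$ is the definition from \cite{Ga1} (available since $X=G_d^\wedge$ is compact metrizable for countable $G$), and the one genuinely nontrivial input you invoke for the reverse inequality --- that every compact subset of $(G,\mathbf u)$ with $\langle\mathbf u\rangle=G$ sits inside some bounded finite-sum set $A(m,0)$, plus openness of $\langle\mathbf u\rangle$ to reduce to finitely many translates --- is precisely the Zelenyuk--Protasov theorem that this paper itself uses as \cite[Theorem 2.10]{Ga3} in the proof of Proposition \ref{p31}. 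Two cosmetic points: the sets $A(m,0)$ allow arbitrary integer coefficients with $|a_1|+\dots+|a_s|\le m+1$, not just $\pm1$ coefficients with at most $k$ summands, but your estimate $|(\sum a_iu_{n_i},\chi)-1|\le\sum|a_i|\,|(u_{n_i},\chi)-1|$ covers that case unchanged; and you do not need the $A(m,0)$ themselves to be compact for your direction of the argument, only that they absorb compact sets. So the only load-bearing step you defer is a correctly cited, genuine theorem, and the rest of the argument closes.
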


The counterpart of Problem \ref{prob1} for {\it precompact} group topologies on $\mathbb{Z}$ is studied by Raczkowski \cite{Rac}. Following \cite{BDM} and motivated by \cite{Rac}, we say that a sequence $\mathbf{u} =\{ u_n\}$ is a $TB$-{\it sequence} in an Abelian group $G$ if there is a {\it precompact} Hausdorff group topology on $G$ in which  $u_n \to 0$.  The group $G$ equipped with the finest precompact Hausdorff group topology $\tau_{b\mathbf{u}}$ with this property is denoted by $(G, b\mathbf{u})$.

For an Abelian group $G$ and an arbitrary subgroup $H\leq G_d^\wedge$,  let $T_H$ be the weakest topology on $G$ such that all characters of $H$ are continuous with respect to $T_H$. One can easily show \cite{CoR} that $T_H$ is a totally bounded group topology on $G$, and it is Hausdorff iff $H$ is dense in $G_d^\wedge$.

A subset $A$ of a topological space $\Omega$ is called {\it sequentially open} if whenever a sequence $\{ u_n\} $ converges to a point of $A$, then  all but finitely many of the members $u_n$ are contained in $A$. The space $\Omega$ is called {\it sequential} if any subset $A$ is open if and only if $A$ is sequentially open.  Franklin \cite{Fra} gave  the following characterization of sequential spaces:
\begin{theorem} \label{t001} {\rm \cite{Fra}}
A topological space is sequential if and only if it is a quotient of a metric space.
\end{theorem}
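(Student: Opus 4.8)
The plan is to prove the two implications separately. The direction ``quotient of a metric space $\Rightarrow$ sequential'' is routine, so I would dispatch it first; the substance lies in building, for an arbitrary sequential space, a metric space that maps onto it by a quotient map.

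Suppose first that $q\colon M\to X$ is a quotient map with $M$ metrizable. To prove $X$ sequential I would take a sequentially open $A\subseteq X$ and show $A$ is open; by the definition of a quotient map this reduces to showing $q^{-1}(A)$ is open in $M$, and since metric spaces are themselves sequential it suffices to check that $q^{-1}(A)$ is sequentially open. But if $m_n\to m$ in $M$ with $m\in q^{-1}(A)$, continuity of $q$ gives $q(m_n)\to q(m)\in A$, so $q(m_n)\in A$ for all large $n$, i.e.\ $m_n\in q^{-1}(A)$ for all large $n$. Hence $q^{-1}(A)$ is open and $A$ is open.

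For the converse, let $X$ be sequential. Let $S\subseteq X^{\mathbb N}$ be the set of all sequences in $X$ that converge, together with all constant sequences (the latter only to make the coming map surjective). For $s=(x^s_n)_n\in S$ with limit $x^s$, let $C_s$ be a homeomorphic copy of the convergent-sequence space $\{0\}\cup\{1/n:n\in\mathbb N\}\subseteq\mathbb R$, with $1/n$ corresponding to the $n$-th term and $0$ to the limit. Set $M:=\bigoplus_{s\in S}C_s$ with the topological-sum topology, and define $q\colon M\to X$ by sending, on each summand $C_s$, the term-point $1/n$ to $x^s_n$ and the limit-point $0$ to $x^s$. Then $M$ is metrizable: give each $C_s$ a metric bounded by $1$ and declare any two points lying in distinct summands to be at distance $1$; one checks this is a metric whose topology is the sum topology (each $C_s$ is clopen, since balls of radius $<1$ stay inside a single summand). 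The map $q$ is onto because of the constant sequences, and it is continuous because its restriction to each $C_s$ is continuous — on the isolated term-points this is automatic, and continuity at the limit-point is exactly the statement that $x^s_n\to x^s$.

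It remains to verify that $q$ is a quotient map, and this is where sequentiality enters. Take $A\subseteq X$ with $q^{-1}(A)$ open in $M$; I must show $A$ open, for which (as $X$ is sequential) it is enough to show $A$ sequentially open. So let $x_n\to x$ with $x\in A$. Then $s:=(x_n)_n\in S$, so $C_s$ is a summand of $M$ on which $q$ sends the limit-point to $x\in A$; since $q^{-1}(A)$ is open and the sets $\{0\}\cup\{1/k:k\ge N\}$ form a neighbourhood base at $0$ in $C_s$, we conclude that $q$ sends $1/n$ into $A$ for all large $n$, that is, $x_n\in A$ for all large $n$. Thus $A$ is sequentially open, hence open, so $q$ is a quotient map and $X$ is a quotient of the metric space $M$. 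I do not expect a serious obstacle here: the only points needing care are the metrizability of the possibly uncountable topological sum and the bookkeeping in this last step, while the real idea — taking the domain to be the disjoint sum of \emph{all} convergent sequences of $X$ — reduces every verification to a direct unwinding of the definitions of ``sequential'', ``sequentially open'' and ``quotient map''.
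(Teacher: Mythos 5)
This is a quoted classical result; the paper gives no proof of it and simply cites Franklin, so there is nothing to compare against except Franklin's own argument --- and your proof is exactly that standard argument, and it is correct: the easy direction rests on metric spaces being sequential and on sequential openness being preserved under preimages of continuous maps, while the converse uses the metric topological sum of copies of the convergent-sequence space $\{0\}\cup\{1/n:n\in\mathbb{N}\}$, one for each convergent sequence of $X$. The single bookkeeping point worth flagging is that in a non-Hausdorff space a sequence may converge to several limits, so the summands should be indexed by \emph{pairs} (sequence, chosen limit) rather than by sequences alone; otherwise the final step, which requires a summand whose limit point maps to the given $x\in A$, need not be available. In the Hausdorff setting of this paper the distinction is immaterial.
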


The following natural generalization of Problem \ref{prob1}
was considered in \cite{Ga3}:
\begin{problem} \label{prob2}
{\it Let $G$ be a group and $S$ be a set of sequences in $G$. Is there a (resp. precompact) Hausdorff group topology $\tau$ on $G$ in which every sequence of $S$ converges to zero}?
\end{problem}

By analogy with $T$- and $TB$-sequences, we define \cite{Ga3}:
\begin{defin} \label{d01}
{\it Let $G$ be an Abelian group and $S$ be a set of sequences in $G$. The set $S$ is called a $TS$-set (resp. $TBS$-set) of sequences if there is a Hausdorff (resp. precompact Hausdorff) group topology on $G$ in which all sequences of $S$ converge to zero. The finest Hausdorff (resp. precompact Hausdorff) group topology with this property is denoted by $\tau_S$ (resp. $\tau_{bS}$). }
\end{defin}

The set of all $TS$-sets (resp. $TBS$-sets) of sequences of a group $G$ we denote by $\mathcal{TS} (G)$ (resp. $\mathcal{TBS} (G)$).
It is clear that, if $S\in \mathcal{TS}(G)$ (resp. $S\in \mathcal{TBS}(G)$), then $S' \in \mathcal{TS}(G)$ (resp. $S' \in \mathcal{TBS}(G)$) for every nonempty subset $S'$ of $S$ and every sequence $\mathbf{u}\in S$ is a $T$-sequence (resp. $\mathbf{u}$ is a $TB$-sequence). Evidently, $\tau_S \subseteq \tau_{S'}$ (resp. $\tau_{bS} \subseteq \tau_{bS'}$). Also, if $S$ contains only trivial $T$-sequences, then $S\in \mathcal{ST}(G)$ and $\tau_S$ is discrete.

By definition, $\tau_\mathbf{u}$ is finer than $\tau_S$ (resp. $\tau_{b\mathbf{u}}$ is finer than $\tau_{bS}$) for every $\mathbf{u} \in S$. Thus, if $U$ is open (resp. closed) in $\tau_S$, then it is open (resp. closed) in $\tau_\mathbf{u}$ for every $\mathbf{u}\in S$.
So, by definition, we obtain that $\tau_S \subseteq \bigwedge_{\mathbf{u}\in S} \tau_\mathbf{u}$ (resp. $\tau_{bS} \subseteq \bigwedge_{\mathbf{u}\in S} \tau_{b\mathbf{u}}$).

The following class of topological groups is defined in \cite{Ga3}:
\begin{defin}
{\it A Hausdorff Abelian topological group $(G, \tau)$ is called a $s$-group (resp. a $bs$-group) and the topology $\tau$ is called a $s$-topology (resp. a $bs$-topology) on $G$ if there is $S\in \mathcal{TS}(G)$ (resp. $S\in \mathcal{TBS}(G)$) such that $\tau =\tau_S$ (resp. $\tau =\tau_{bS}$). }
\end{defin}
In other words, $s$-groups are those topological groups whose topology can be described by a set of convergent sequences. The family of all  Abelian $s$-group is denoted by  $\mathbf{SA}$.

One of the most natural way how to find $TS$-sets of sequences is as follows. Let $(G, \tau)$ be a Hausdorff Abelian topological group. We denote the set of all  sequences of $(G, \tau)$ converging to zero  by $S(G,\tau)$:
\[
S(G,\tau) =\left\{ \mathbf{u} =\{ u_n \} \subset G : \; u_n \to 0 \mbox{ in } \tau \right\}.
\]
It is clear, that $S(G,\tau)\in \mathcal{TS}(G)$ and $\tau \subseteq \tau_{S(G,\tau)}$. The group $\mathbf{s}(G,\tau):= (G, \tau_{S(G,\tau)})$ is called the $s$-refinement of $(G, \tau)$ \cite{Ga3}.

In \cite{Ga3} it is proved that the class $\mathbf{SA}$ is closed under taking of quotient and it is finitely multiplicative. It is natural that this class contains all sequential groups \cite{Ga3}. For every countable $TS$-set of sequences in an Abelian group $G$ the space $(G,\tau_S)$ is complete and sequential (see \cite{Ga3}). Another non-trivial examples of sequential Hausdorff Abelian groups see \cite{ChMPT}. A complete description of Abelian $s$-groups is given in \cite{Ga3}.

Let $X$ and $Y$ be topological groups. Following Siwiec \cite{Siw}, a continuous homomorphism $p: X\to Y$ is called {\it sequence-covering} if and only if it is surjective and for every sequence $\{ y_n \}$ converging to the unit $e_Y$ there is a sequence $\{ x_n \}$ converging to $e_X$ such that $p(x_n) = y_n$.

{\bf II. Main results.} The main goal of the article is to study the Pontryagin duality for Abelian $s$- and $sb$-groups.  We  give a simple dual connection between the $MAP$ $s$-topologies on an infinite Abelian group $G$ and the dense $\mathfrak{g}$-closed subgroups of the compact group $G_d^\wedge$. Also we describe all $bs$-topologies on $G$.

The article is organized as follows.
In Section \ref{sec1} we study  the dual groups of  Abelian $s$- and $sb$-groups  and prove the following generalization  of the algebraic part of Theorem \ref{t01}:
\begin{theorem} \label{t1}
Let $S\in \mathcal{TS}(G)$ for an  infinite Abelian group $G$ and $i_S : G_d \to (G, \tau_S), i_S (g)=g,$ be the natural continuous isomorphism. Then
\begin{enumerate}
\item[{\rm 1)}] $i_S^\wedge \left((G, \tau_S)^\wedge \right) =s_S (G_d^\wedge)$;
\item[{\rm 2)}] $\mathbf{n} (G,\tau_S) =\left[  s_S (G^\wedge_d ) \right]^\perp$ algebraically.
\end{enumerate}
\end{theorem}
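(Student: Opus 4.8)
The plan is to unwind the definitions so that both statements become assertions about which characters of $G_d$ survive the passage to the coarser topology $\tau_S$. For part 1), the key observation is that a character $\chi \in \widehat{G_d}$ is continuous on $(G,\tau_S)$ if and only if $\chi$ is continuous on $(G,\tau_\mathbf{u})$ for every $\mathbf{u} \in S$; this uses the relation $\tau_S \subseteq \bigwedge_{\mathbf{u}\in S}\tau_\mathbf{u}$ together with the standard fact that on $(G,\tau_S)$ a character is continuous iff it maps every $\tau_S$-null sequence into a neighbourhood of $1$, and the sequences of $S$ generate $\tau_S$. (Concretely: if $\chi$ is continuous on each $(G,\tau_\mathbf{u})$ then $\ker\chi$ is $\tau_\mathbf{u}$-open for all $\mathbf{u}$, hence is $\tau_S$-sequentially open, hence $\tau_S$-open since $(G,\tau_S)$ is sequential for countable $S$ — and for general $S$ one argues directly that the preimage of a basic neighbourhood of $1\in\mathbb T$ is a $\tau_S$-neighbourhood of $0$, because the $\tau_S$-neighbourhood filter at $0$ is built precisely from the requirement that all sequences of $S$ converge.) Now for a fixed $\mathbf{u}=\{u_n\}\in S$, Theorem \ref{t01} (or rather its character-continuity content) tells us that $\chi$ is $\tau_\mathbf{u}$-continuous exactly when $\chi \in s_\mathbf{u}(G_d^\wedge)$, i.e. $(u_n,\chi)\to 1$. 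Intersecting over $\mathbf{u}\in S$ gives that $i_S^\wedge\big((G,\tau_S)^\wedge\big)$, viewed inside $\widehat{G_d}=G_d^\wedge$, equals $\bigcap_{\mathbf{u}\in S} s_\mathbf{u}(G_d^\wedge) = s_S(G_d^\wedge)$, which is exactly 1). One must also check that $i_S^\wedge$ is injective, so that this identification is legitimate; but $i_S$ has dense range (it is the identity on the underlying set), hence $i_S^\wedge$ is injective, and a continuous character of $(G,\tau_S)$ is determined by its values on $G$.

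For part 2), recall $\mathbf{n}(G,\tau_S)=\bigcap_{\chi\in(G,\tau_S)^\wedge}\ker\chi$. By 1), the characters of $(G,\tau_S)$ are, as functions on $G$, precisely the elements of $H:=s_S(G_d^\wedge)\leq G_d^\wedge$. Therefore
\[
\mathbf{n}(G,\tau_S)=\bigcap_{\chi\in H}\ker\chi = \{\, g\in G : (\,g,\chi)=1 \text{ for all }\chi\in H\,\} = H^\perp,
\]
where the annihilator is taken inside $G=(G_d^\wedge)^\wedge$ via the canonical evaluation, which is an isomorphism $G_d \cong (G_d^\wedge)^\wedge$ since $G_d$ is discrete (Pontryagin duality for discrete/compact groups). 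This is the claimed algebraic identity $\mathbf{n}(G,\tau_S)=\big[s_S(G_d^\wedge)\big]^\perp$.

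The main obstacle is the first continuity equivalence in part 1), specifically the direction showing that a character continuous on every $(G,\tau_\mathbf{u})$ is already continuous on $(G,\tau_S)$ when $S$ is uncountable (so $(G,\tau_S)$ need not be sequential). Here one cannot simply invoke sequentiality; instead one must go back to the explicit construction of $\tau_S$ as the finest group topology in which all sequences of $S$ converge to $0$, and verify that $\ker\chi$ — being a subgroup which is $\tau_\mathbf{u}$-open for each $\mathbf{u}$, hence contains a $\tau_\mathbf{u}$-neighbourhood of $0$ for each $\mathbf{u}$, hence "absorbs" every sequence of $S$ eventually — actually belongs to the neighbourhood filter defining $\tau_S$. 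This amounts to checking that the family of subgroups of $G$ that eventually contain every sequence of $S$ is exactly (a base of) the $\tau_S$-neighbourhoods of $0$ among subgroups, which follows from the Protasov–Zelenyuk-type description of $\tau_S$; once this is granted, everything else is a routine bookkeeping of annihilators and the duality $G_d\cong(G_d^\wedge)^\wedge$.
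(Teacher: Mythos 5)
Your overall route is the same as the paper's: the inclusion $i_S^\wedge\left((G,\tau_S)^\wedge\right)\subseteq s_S(G_d^\wedge)$ comes from factoring $i_S$ through each $(G,\mathbf{u})$ and using the single-sequence identity $i_\mathbf{u}^\wedge\left((G,\mathbf{u})^\wedge\right)=s_\mathbf{u}(G_d^\wedge)$ (Proposition~\ref{p1}(ii)); the reverse inclusion reduces to showing that an algebraic character $x$ with $(u_n,x)\to 1$ for every $\mathbf{u}\in S$ is $\tau_S$-continuous; and part 2) is the same one-line computation of $\mathbf{n}(G,\tau_S)$ as an annihilator. The paper disposes of the crucial continuity step by citing \cite[Theorem 2.4]{Ga3} (a homomorphism of $(G,\tau_S)$ into a topological group is continuous iff it sends every sequence of $S$ to a null sequence); you correctly isolate this as the only genuine difficulty.

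However, your own justification of that step contains a real error. You assert that if $\chi$ is continuous on $(G,\tau_\mathbf{u})$ then $\ker\chi$ is $\tau_\mathbf{u}$-open. This is false for $\mathbb{T}$-valued characters: a continuous character may be injective with dense image. For instance, take $G=\mathbb{Z}$ and $\mathbf{u}=\{n!\}$; the point $x=e\ (\mathrm{mod}\ 1)$ satisfies $n!x\to 0$ in $\mathbb{T}$ and is irrational, so the corresponding character of $(\mathbb{Z},\mathbf{u})$ is continuous and has kernel $\{0\}$, which is not open since $\tau_\mathbf{u}$ is non-discrete. Consequently your reduction to ``subgroups that eventually contain every sequence of $S$'' does not apply to $\ker\chi$, and the sequentially-open detour is unavailable anyway for uncountable $S$, as you yourself note. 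The clean repair is a maximality argument on topologies rather than on subgroups: let $\tau'$ be the supremum of $\tau_S$ and the initial topology induced on $G$ by $\chi$. Then $\tau'$ is a Hausdorff group topology, finer than $\tau_S$, in which every $\mathbf{u}\in S$ still converges to $0$ (because $u_n\to 0$ in $\tau_S$ and $(u_n,x)\to 1$); since $\tau_S$ is by definition the finest such topology, $\tau'=\tau_S$ and $\chi$ is $\tau_S$-continuous. With this repair (or simply with the citation of \cite[Theorem 2.4]{Ga3}, as in the paper) the remainder of your argument, including part 2), is correct.
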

Also, in this section, we describe all  $sb$-topologies on an infinite Abelian group $G$. In \cite{DMT}, it was pointed out that $\tau_{b\mathbf{u}} = T_{s_\mathbf{u} (G_d^\wedge)}$ for one $TB$-sequence $\mathbf{u}$. The following theorem generalizes this fact:
\begin{theorem} \label{t2}
{\it Let $S\in \mathcal{TBS}(G)$ for an  infinite Abelian group $G$ and $j_S : G_d \to (G, \tau_{bS}), j_S (g)=g,$ be the natural continuous isomorphism. Then
\begin{enumerate}
\item[{\rm 1)}] $j_S^\wedge \left((G, \tau_S)^\wedge \right) =s_S (G_d^\wedge)$;
\item[{\rm 2)}] $\tau_{bS} = T_{s_S (G_d^\wedge)}$.
\end{enumerate} }
\end{theorem}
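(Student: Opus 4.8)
The plan is to reduce the statement to the Comfort--Ross description of precompact group topologies. Recall (see \cite{CoR}) that a precompact Hausdorff group topology $\tau$ on $G$ is precisely one of the form $T_H$ for a dense subgroup $H\le G_d^\wedge$; moreover $(G,T_H)^\wedge = H$, the dual $(G,\tau)^\wedge$ is dense in $G_d^\wedge$, $\tau = T_{(G,\tau)^\wedge}$, and a neighbourhood base at $0$ of $T_H$ is formed by the sets $V(\chi_1,\dots,\chi_k;\varepsilon) = \{ g\in G : |\chi_i(g)-1| < \varepsilon,\ i=1,\dots,k \}$ with $\chi_1,\dots,\chi_k\in H$ and $\varepsilon>0$. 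I would also record the elementary fact that $s_{\mathbf{u}}(G_d^\wedge)$ is a subgroup of $G_d^\wedge$ for every sequence $\mathbf{u}$, so that $s_S(G_d^\wedge) = \bigcap_{\mathbf{u}\in S} s_{\mathbf{u}}(G_d^\wedge)$ is again a subgroup of $G_d^\wedge$.

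The heart of the matter is the following equivalence, which I would prove first: \emph{for any precompact Hausdorff group topology $\tau$ on $G$, every sequence of $S$ converges to $0$ in $\tau$ if and only if $(G,\tau)^\wedge \subseteq s_S(G_d^\wedge)$.} The forward direction is immediate: if $\mathbf{u}=\{u_n\}\in S$ tends to $0$ in $\tau$ and $\chi\in(G,\tau)^\wedge$, then $(u_n,\chi)=\chi(u_n)\to 1$, so $\chi\in s_{\mathbf{u}}(G_d^\wedge)$; as $\mathbf{u}\in S$ was arbitrary, $\chi\in s_S(G_d^\wedge)$. For the converse, write $\tau = T_H$ with $H=(G,\tau)^\wedge\subseteq s_S(G_d^\wedge)$; given a basic neighbourhood $V(\chi_1,\dots,\chi_k;\varepsilon)$ and a sequence $\mathbf{u}=\{u_n\}\in S$, each $\chi_i$ lies in $s_{\mathbf{u}}(G_d^\wedge)$, so $\chi_i(u_n)\to 1$ and hence $u_n\in V(\chi_1,\dots,\chi_k;\varepsilon)$ for all large $n$; thus $u_n\to 0$ in $\tau$.

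With this in hand I would prove part 2) in three moves. First, since $S\in\mathcal{TBS}(G)$ there is a precompact Hausdorff group topology $\tau$ on $G$ in which all sequences of $S$ converge to $0$; by the equivalence $(G,\tau)^\wedge\subseteq s_S(G_d^\wedge)$, and since $(G,\tau)^\wedge$ is dense in $G_d^\wedge$ so is $s_S(G_d^\wedge)$. Hence $T_{s_S(G_d^\wedge)}$ is a precompact Hausdorff group topology on $G$, and because $(G,T_{s_S(G_d^\wedge)})^\wedge = s_S(G_d^\wedge)$, the equivalence shows that every sequence of $S$ converges to $0$ in $T_{s_S(G_d^\wedge)}$; by the maximality of $\tau_{bS}$ this gives $T_{s_S(G_d^\wedge)}\subseteq\tau_{bS}$. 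Conversely, $\tau_{bS}$ is itself precompact, Hausdorff, and all sequences of $S$ converge to $0$ in it, so $(G,\tau_{bS})^\wedge\subseteq s_S(G_d^\wedge)$ by the equivalence; using $\tau_{bS} = T_{(G,\tau_{bS})^\wedge}$ together with the monotonicity $H'\subseteq H\Rightarrow T_{H'}\subseteq T_H$ yields $\tau_{bS}\subseteq T_{s_S(G_d^\wedge)}$, so the two topologies coincide. Part 1) then drops out: $j_S$ is a continuous isomorphism, hence $j_S^\wedge$ is injective and carries $(G,\tau_{bS})^\wedge$ onto itself viewed inside $G_d^\wedge$, which by part 2) equals $(G,T_{s_S(G_d^\wedge)})^\wedge = s_S(G_d^\wedge)$.

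The only genuinely nontrivial ingredient is the Comfort--Ross machinery — above all that $T_H$ acquires no continuous characters beyond $H$ and that every precompact Hausdorff topology is realised as some $T_H$ — so the hard part is essentially imported; the remainder is bookkeeping with the neighbourhood base of $T_H$. The one point that demands a little care is that the density of $s_S(G_d^\wedge)$ in $G_d^\wedge$ cannot simply be assumed but must be bootstrapped from the hypothesis $S\in\mathcal{TBS}(G)$, as in the first move above.
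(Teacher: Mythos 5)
Your proof is correct, and it takes a recognizably different route from the paper's. The paper first invokes Theorem \ref{t1}(1) to identify $s_S(G_d^\wedge)$ with $i_S^\wedge\left((G,\tau_S)^\wedge\right)$ --- which ultimately rests on the nontrivial continuity criterion for characters of $s$-groups from \cite{Ga3} --- and then sandwiches $Y=j_S^\wedge\left((G,\tau_{bS})^\wedge\right)$ between that group and itself via Lemma \ref{l11} and \cite{CoR}, deducing $\tau_{bS}=T_{s_S(G_d^\wedge)}$ at the very end. You instead stay entirely inside the Comfort--Ross precompact framework: your equivalence (``all sequences of $S$ converge to $0$ in a precompact Hausdorff $\tau$ iff $(G,\tau)^\wedge\subseteq s_S(G_d^\wedge)$'') is exactly Lemma \ref{l11} upgraded to sets of sequences, and from it you get part 2) first by playing the maximality of $\tau_{bS}$ against the identity $\tau_{bS}=T_{(G,\tau_{bS})^\wedge}$, with part 1) falling out as the equality $(G,T_{s_S(G_d^\wedge)})^\wedge=s_S(G_d^\wedge)$. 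What your version buys is self-containedness: it never touches the $s$-topology $\tau_S$ or Theorem \ref{t1}, so the precompact half of the duality is established independently of the $s$-group machinery. What the paper's version buys is that it simultaneously exhibits the relation between $(G,\tau_S)^\wedge$ and $(G,\tau_{bS})^\wedge$, which is what feeds Corollary \ref{c01}. One small point you handled correctly: the ``$(G,\tau_S)^\wedge$'' in the published statement of part 1) must be read as $(G,\tau_{bS})^\wedge$ (otherwise $j_S^\wedge$ is applied to the wrong group), and your bootstrap of the density of $s_S(G_d^\wedge)$ from $S\in\mathcal{TBS}(G)$ is exactly the care the converse direction needs.
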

As an immediate corollary of Theorems \ref{t1} and \ref{t2} we obtain:
\begin{cor} \label{c01}
{\it  Let $S\in \mathcal{TBS}(G)$ for an  infinite Abelian group $G$ and $j : (G, \tau_{S}) \to (G, \tau_{bS}), j(g)=g,$ be the natural continuous isomorphism. Then its conjugate homomorphism $j^\wedge : (G, \tau_{bS})^\wedge \to (G, \tau_{S})^\wedge$ is a continuous isomorphism.}
\end{cor}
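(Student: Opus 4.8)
The plan is to derive Corollary \ref{c01} directly from Theorems \ref{t1} and \ref{t2} by comparing the two descriptions of the dual groups. First I would fix $S \in \mathcal{TBS}(G)$; note that since $\tau_{bS}$ is a precompact (hence Hausdorff) group topology in which all sequences of $S$ converge to zero, $S$ is in particular a $TS$-set, so $\tau_S$ is defined and $\tau_{bS} \subseteq \tau_S$. Thus the map $j : (G,\tau_S) \to (G,\tau_{bS})$, $j(g)=g$, is a well-defined continuous isomorphism, and we may factor the natural continuous isomorphism $i_S : G_d \to (G,\tau_S)$ and its totally bounded analogue $j_S : G_d \to (G,\tau_{bS})$ as $j_S = j \circ i_S$.

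Next I would pass to conjugate homomorphisms. Applying the contravariant functor ${}^\wedge$ to $j_S = j \circ i_S$ gives $j_S^\wedge = i_S^\wedge \circ j^\wedge$, where $j^\wedge : (G,\tau_{bS})^\wedge \to (G,\tau_S)^\wedge$ and $i_S^\wedge : (G,\tau_S)^\wedge \to G_d^\wedge$. By Theorem \ref{t1}(1), $i_S^\wedge$ is injective with image $s_S(G_d^\wedge)$; by Theorem \ref{t2}(1), $j_S^\wedge$ is injective with the same image $s_S(G_d^\wedge)$. Injectivity of $i_S^\wedge$ immediately forces $j^\wedge$ to be injective as well (a composite $i_S^\wedge \circ j^\wedge$ that is injective has injective right factor). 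For surjectivity of $j^\wedge$: given $\chi \in (G,\tau_S)^\wedge$, its image $i_S^\wedge(\chi) \in s_S(G_d^\wedge) = \mathrm{Im}(j_S^\wedge)$, so there is $\psi \in (G,\tau_{bS})^\wedge$ with $j_S^\wedge(\psi) = i_S^\wedge(\chi)$, i.e. $i_S^\wedge(j^\wedge(\psi)) = i_S^\wedge(\chi)$; injectivity of $i_S^\wedge$ then yields $j^\wedge(\psi) = \chi$. Hence $j^\wedge$ is a bijective homomorphism, i.e. an algebraic isomorphism, and it is continuous because conjugates of continuous homomorphisms are always continuous for the compact-open topologies.

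The only genuinely delicate point is making sure the bookkeeping in the paragraph above is honest about what each theorem actually supplies: Theorems \ref{t1} and \ref{t2} assert that $i_S^\wedge$ and $j_S^\wedge$ are injective with equal image $s_S(G_d^\wedge)$, and this is exactly the hypothesis needed for the ``two-out-of-three'' style argument on the factorization $j_S^\wedge = i_S^\wedge \circ j^\wedge$ to go through. Once that is in place the argument is purely formal: continuity of $j^\wedge$ is automatic, injectivity is inherited from $i_S^\wedge$, and surjectivity is the image computation. I would therefore expect no real obstacle here — the corollary is a clean formal consequence — provided one is careful to invoke Theorem \ref{t2}(1) for the totally bounded side rather than re-proving the image identity; the substance of the work has already been done in Theorems \ref{t1} and \ref{t2}.

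\begin{proof}
Since $\tau_{bS}$ is a Hausdorff group topology in which every sequence of $S$ converges to zero, $S \in \mathcal{TS}(G)$ and $\tau_{bS} \subseteq \tau_S$, so $j : (G,\tau_S) \to (G,\tau_{bS})$ is a well-defined continuous isomorphism and $j_S = j \circ i_S$. Passing to conjugates, $j_S^\wedge = i_S^\wedge \circ j^\wedge$. By Theorem \ref{t1}(1) the homomorphism $i_S^\wedge$ is injective with image $s_S(G_d^\wedge)$, and by Theorem \ref{t2}(1) the homomorphism $j_S^\wedge$ is injective with the same image $s_S(G_d^\wedge)$. From injectivity of $j_S^\wedge = i_S^\wedge \circ j^\wedge$ we get that $j^\wedge$ is injective. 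For surjectivity, let $\chi \in (G,\tau_S)^\wedge$; then $i_S^\wedge(\chi) \in s_S(G_d^\wedge) = j_S^\wedge\bigl((G,\tau_{bS})^\wedge\bigr)$, so there is $\psi \in (G,\tau_{bS})^\wedge$ with $i_S^\wedge\bigl(j^\wedge(\psi)\bigr) = j_S^\wedge(\psi) = i_S^\wedge(\chi)$, whence $j^\wedge(\psi) = \chi$ by injectivity of $i_S^\wedge$. Thus $j^\wedge$ is an algebraic isomorphism, and it is continuous as the conjugate of a continuous homomorphism.
\end{proof}
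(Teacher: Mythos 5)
Your proposal is correct and takes essentially the same route as the paper, which states the corollary as an immediate consequence of Theorems \ref{t1} and \ref{t2}: the factorization $j_S^\wedge = i_S^\wedge \circ j^\wedge$ and the identification of both images with $s_S(G_d^\wedge)$ are exactly the ingredients already established in the proof of Theorem \ref{t2}. Your write-up merely makes the formal two-out-of-three deduction explicit.
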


Using Theorem \ref{t1} we obtain  the following dual connection between  dual groups  of $s$-groups and $\mathfrak{g}$-closed subgroups of  compact Abelian groups:
\begin{theorem} \label{t12}
Let $G$ be an infinite Abelian group. Set $X= G_d^{\wedge}$.
\begin{enumerate}
\item[{\rm (i)}] If $S\in \mathcal{TS}(G)$ and $i_S : G_d \to (G,\tau_S)$ is the natural continuous isomorphism, then $i^\wedge_S \left( (G, \tau_S )^\wedge \right)$ is a $\mathfrak{g}$-closed subgroup of $X$.
\item[{\rm (ii)}] If $H$ is a $\mathfrak{g}$-closed subgroup of $X$, then there is $S\in \mathcal{TBS}\left( ({\rm cl} H)^\wedge \right)$ such that $H=\left( ({\rm cl} H)^\wedge, \tau_S \right)^\wedge$ algebraically.
\end{enumerate}
\end{theorem}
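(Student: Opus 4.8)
The plan is to deduce both parts from Theorem \ref{t1} together with the characterization of $\mathfrak{g}$-closed subgroups via the operators $s_{\mathbf u}$. For part (i), let $S\in\mathcal{TS}(G)$. By Theorem \ref{t1}(1) we have $i_S^\wedge\bigl((G,\tau_S)^\wedge\bigr)=s_S(X)=\bigcap_{\mathbf u\in S}s_{\mathbf u}(X)$. Each set $s_{\mathbf u}(X)$ with $\mathbf u\in\widehat X^{\mathbb N}$ is, by the very definition of the operator $\mathfrak g_X$, a $\mathfrak g$-closed subgroup of $X$ (indeed it contains itself and hence equals its $\mathfrak g$-closure, since $\mathfrak g(H)=\bigcap\{s_{\mathbf v}(X):H\le s_{\mathbf v}(X)\}\subseteq s_{\mathbf u}(X)$ whenever $H=s_{\mathbf u}(X)$). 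An arbitrary intersection of $\mathfrak g$-closed subgroups is again $\mathfrak g$-closed, because the closure operator $\mathfrak g_X$ is, by construction, an intersection of the point-closure operators $H\mapsto s_{\mathbf u}(X)$ and therefore commutes with intersections in the usual way: if $H=\bigcap_\alpha H_\alpha$ with each $H_\alpha$ $\mathfrak g$-closed, then $\mathfrak g(H)\subseteq\mathfrak g(H_\alpha)=H_\alpha$ for all $\alpha$, so $\mathfrak g(H)\subseteq H$, giving $\mathfrak g(H)=H$. Hence $i_S^\wedge\bigl((G,\tau_S)^\wedge\bigr)=s_S(X)$ is $\mathfrak g$-closed in $X$.

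For part (ii), suppose $H$ is a $\mathfrak g$-closed subgroup of $X=G_d^\wedge$. Set $K={\rm cl}\,H$, a closed, hence compact, subgroup of $X$, and put $G_1=K^\wedge$. Since $H$ is dense in $K$, the restriction map identifies $H$ with a dense subgroup of $K^{\wedge\wedge}=G_1^\wedge{}_d{}^{\wedge}$... more carefully: by Pontryagin duality for the compact group $K$, we have $K=(G_1)_d^\wedge$ up to the canonical identification, so $H$ sits inside $(G_1)_d^\wedge$ as a subgroup. Being $\mathfrak g$-closed in $X$, and since $\mathfrak g$-closedness passes to the closed subgroup $K$ (restriction of sequences $\mathbf u\in\widehat X^{\mathbb N}$ to $K$, together with the fact that characters of $K$ extend to $X$, shows $\mathfrak g_K(H)=\mathfrak g_X(H)\cap K=H$), $H$ is $\mathfrak g$-closed in $(G_1)_d^\wedge$ as well. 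Now the key point: because $H$ is $\mathfrak g$-closed in $(G_1)_d^\wedge$, it is an intersection $\bigcap_{\mathbf u\in S'}s_{\mathbf u}\bigl((G_1)_d^\wedge\bigr)$ for a suitable family $S'\subseteq\widehat{(G_1)_d^\wedge}{}^{\mathbb N}=G_1^{\mathbb N}$; identifying $\widehat{(G_1)_d^\wedge}$ with $G_1$, each such $\mathbf u$ is a sequence in $G_1$, and the condition defining $s_{\mathbf u}$ says precisely that the characters in $\mathbf u$ converge to zero in the weak topology $T_H$, so each $\mathbf u\in S'$ is a $TB$-sequence in $G_1$. Thus $S'\in\mathcal{TBS}(G_1)$, and by Theorem \ref{t2}(1) we get $j_{S'}^\wedge\bigl((G_1,\tau_{S'})^\wedge\bigr)=s_{S'}\bigl((G_1)_d^\wedge\bigr)=H$, which is the desired conclusion with $S=S'$.

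The main obstacle I anticipate is the bookkeeping in part (ii): one must be careful that passing to the closed subgroup $K={\rm cl}\,H$ and dualizing does not lose information, i.e. that $\mathfrak g$-closedness is inherited by $K$ and that the resulting sequences in $G_1=K^\wedge$ genuinely form a $TBS$-set rather than merely a set of individual $TB$-sequences. The inheritance of $\mathfrak g$-closedness along closed subgroups should follow from the compatibility of the operator $s_{\mathbf u}$ with the inclusion $K\hookrightarrow X$ (every character of $X$ restricts to one of $K$, and conversely every character of $K$ extends to $X$ since $K$ is compact, hence dually split off), while the $TBS$-property is automatic once one observes that $T_H$ itself is a precompact Hausdorff topology (Hausdorff because $H$ is dense in $(G_1)_d^\wedge$) in which every $\mathbf u\in S'$ converges to zero — so $S'\in\mathcal{TBS}(G_1)$ with $\tau_{bS'}\supseteq T_H$, and Theorem \ref{t2}(2) then pins down $\tau_{bS'}=T_{s_{S'}((G_1)_d^\wedge)}=T_H$. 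A secondary routine check is that the algebraic identification in Theorem \ref{t1}/\ref{t2} is exactly the map $i_S^\wedge$ (resp. $j_S^\wedge$), so that "$H=((\operatorname{cl}H)^\wedge,\tau_S)^\wedge$ algebraically" is literally the statement of Theorem \ref{t2}(1) transported along $\widehat{(G_1)_d^\wedge}\cong G_1$.
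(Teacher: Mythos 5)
Your proposal is correct and follows essentially the same route as the paper: part (i) is the observation that $i_S^\wedge\bigl((G,\tau_S)^\wedge\bigr)=s_S(X)$ (Theorem \ref{t1}(1)) is an intersection of sets $s_{\mathbf u}(X)$ and hence $\mathfrak g$-closed, and part (ii) takes for $S$ the family of all sequences in $({\rm cl}\,H)^\wedge$ witnessing the $\mathfrak g$-closedness of $H$ in ${\rm cl}\,H$, checks it is a $TBS$-set via density of $H$ (your inline argument with $T_H$ is exactly the content of Corollary \ref{c11}, $3)\Rightarrow 1)$, which the paper cites instead), and concludes by Theorem \ref{t1}(1)/\ref{t2}(1). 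Your explicit verification that $\mathfrak g$-closedness in $X$ passes to ${\rm cl}\,H$ by restricting characters is a point the paper declares ``clear,'' so no substantive difference remains.
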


The following two statements are immediately corollaries of Theorem \ref{t12} and the fact that every sequential group is a $s$-group \cite{Ga3}:
\begin{cor} \label{c02}
A dense subgroup $H$ of an infinite compact Abelian group $X$  is $\mathfrak{g}$-closed if and only if $H$ algebraically is the dual group of $\widehat{X}$ endowed with some $MAP$ $s$-topology.
\end{cor}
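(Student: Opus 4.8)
The plan is to derive Corollary \ref{c02} directly from Theorem \ref{t12} together with the fact (proved in \cite{Ga3}) that every sequential Hausdorff Abelian group is an $s$-group, applied with $G$ replaced by $\widehat{X}$. Write $G = \widehat{X}$; since $X$ is compact, $X \cong (G_d)^\wedge$ by Pontryagin duality, so we are in the setting of Theorem \ref{t12} with the compact group there being $X$ itself.

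First I would prove the "if" direction. Suppose $H$ carries a $MAP$ $s$-topology $\tau = \tau_S$ for some $S \in \mathcal{TS}(\widehat{X})$, and $H$ is (algebraically) the dual group of $(\widehat{X}, \tau_S)$ via the embedding $i_S^\wedge$. By Theorem \ref{t12}(i), $i_S^\wedge\bigl((\widehat{X},\tau_S)^\wedge\bigr) = s_S(X)$ is $\mathfrak{g}$-closed in $X$; so $H$ is $\mathfrak{g}$-closed. One should also check $H$ is dense: by Theorem \ref{t1}(1), $i_S^\wedge\bigl((\widehat{X},\tau_S)^\wedge\bigr) = s_S(X)$, and density of $s_S(X)$ in $X$ is exactly the statement that the totally bounded topology $T_{s_S(X)}$ on $\widehat{X}$ is Hausdorff, equivalently that $(\widehat{X},\tau_S)$ is $MAP$ (its Bohr modification is Hausdorff) — this uses $\mathbf{n}(\widehat{X},\tau_S) = [s_S(X)]^\perp = \{0\}$ from Theorem \ref{t1}(2) together with the elementary fact recalled in the introduction that $T_H$ is Hausdorff iff $H$ is dense in $G_d^\wedge$.

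Next, the "only if" direction. Let $H$ be a dense $\mathfrak{g}$-closed subgroup of $X$. Since $H$ is dense, $\mathrm{cl}\, H = X$, so Theorem \ref{t12}(ii) yields $S \in \mathcal{TBS}(\widehat{X})$ with $H = (\widehat{X}, \tau_S)^\wedge$ algebraically. It remains to note that $S$, being a $TBS$-set, is in particular a $TS$-set, so $\tau_S$ is an $s$-topology on $\widehat{X}$; and it is $MAP$ because, again by Theorem \ref{t1}(2), $\mathbf{n}(\widehat{X},\tau_S) = [s_S(X)]^\perp = H^\perp$, which is $\{0\}$ precisely because $H$ is dense in $X$. (Here I use that $H^\perp = \{0\}$ for a subgroup $H \leq X$ iff $H$ separates points of $\widehat{X}$ iff $H$ is dense in $X = (\widehat{X})^\wedge_d{}^\wedge$; this is standard Pontryagin duality for the compact group $X$.) Thus $H$ is the dual of $\widehat{X}$ with the $MAP$ $s$-topology $\tau_S$, completing the proof.

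I do not expect a serious obstacle here: the corollary is essentially a repackaging of Theorem \ref{t12} once one observes that "dense" on the subgroup side corresponds to "$MAP$" on the topological-group side, via the perfect pairing between $s_S(X)$-type subgroups and von Neumann radicals supplied by Theorem \ref{t1}(2). The only point requiring a little care is the bookkeeping between $\mathcal{TS}$ and $\mathcal{TBS}$ in the two directions — the forward direction produces only a $TBS$-set but that suffices since $\mathcal{TBS}(G) \subseteq \mathcal{TS}(G)$ — and making sure that the algebraic identification $H = (\widehat{X},\tau_S)^\wedge$ is compatible with the evaluation/embedding maps so that the phrase "algebraically is the dual group" is literally the conclusion of Theorem \ref{t12}.
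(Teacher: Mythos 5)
Your argument is correct and is exactly the derivation the paper intends: the corollary is stated as an immediate consequence of Theorem \ref{t12}, and you supply the right bookkeeping, namely $G=\widehat{X}$ with $G_d^\wedge\cong X$, Theorem \ref{t1} to identify the image of $i_S^\wedge$ with $s_S(X)$ and to translate $MAP$ into $[s_S(X)]^\perp=\{0\}$ (equivalently density, via Corollary \ref{c11}), and the inclusion $\mathcal{TBS}(\widehat{X})\subseteq\mathcal{TS}(\widehat{X})$ in the converse direction. Only trivial quibbles: the opening phrase ``$H$ carries a $MAP$ $s$-topology'' should read ``$\widehat{X}$ carries \dots'', and the fact that sequential groups are $s$-groups is needed for Corollary \ref{c03}, not here.
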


\begin{cor} \label{c03}
The dual group of a sequential group $(G,\tau)$ is a $\mathfrak{g}$-closed subgroup of the compact group $G_d^\wedge$.
\end{cor}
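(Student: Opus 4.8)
The plan is to realize $(G,\tau)$ as a $s$-group and then apply part (i) of Theorem \ref{t12}. First, since $(G,\tau)$ is sequential, by the result cited from \cite{Ga3} (every sequential group is a $s$-group) there is a set $S\in\mathcal{TS}(G)$ with $\tau=\tau_S$; concretely one may simply take $S=S(G,\tau)$, the set of all $\tau$-null sequences, so that $(G,\tau)=\mathbf{s}(G,\tau)$. Then the identity map $i_S:G_d\to(G,\tau_S)$ is the natural continuous isomorphism appearing in Theorem \ref{t12}, and its conjugate $i_S^\wedge:(G,\tau_S)^\wedge\to G_d^\wedge$ is a continuous injective homomorphism whose image is a subgroup of $X:=G_d^\wedge$.

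Next, I would invoke Theorem \ref{t12}(i) directly: it asserts that $i_S^\wedge\bigl((G,\tau_S)^\wedge\bigr)$ is a $\mathfrak{g}$-closed subgroup of $X$. Since $\tau=\tau_S$, this image is precisely (the isomorphic copy inside $X$ of) the dual group $(G,\tau)^\wedge$. Thus $(G,\tau)^\wedge$, viewed as a subgroup of $G_d^\wedge$ via $i_S^\wedge$, is $\mathfrak{g}$-closed, which is exactly the assertion of the corollary.

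There is essentially no obstacle here beyond bookkeeping: the whole content has been front-loaded into Theorem \ref{t12} and into the cited fact that sequential groups are $s$-groups. The only point deserving a word of care is the identification of $i_S^\wedge\bigl((G,\tau_S)^\wedge\bigr)$ with the dual of $(G,\tau)$ — i.e. checking that $i_S^\wedge$ is injective so that it is a group isomorphism onto its image. This is automatic because $i_S$ has dense range (indeed it is a continuous isomorphism of the underlying groups), so dually $i_S^\wedge$ is injective; hence $(G,\tau)^\wedge\cong i_S^\wedge\bigl((G,\tau)^\wedge\bigr)$ as abstract groups, and the latter is $\mathfrak{g}$-closed in $G_d^\wedge$ by Theorem \ref{t12}(i). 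No separate argument about the compact-open topology is needed, since $\mathfrak{g}$-closedness is a purely algebraic-set-theoretic property of the subgroup of $X$.
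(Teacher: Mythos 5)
Your proposal is correct and follows exactly the route the paper intends: the paper presents Corollary \ref{c03} as an immediate consequence of Theorem \ref{t12}(i) together with the fact from \cite{Ga3} that every sequential group is a $s$-group, which is precisely your argument (with $S=S(G,\tau)$ and the standard identification of $(G,\tau)^\wedge$ with its image under the injective map $i_S^\wedge$). No discrepancies to report.
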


For  an Abelian topological group $(G,\tau)$, $bG = \widehat{G}_d^\wedge$ denotes its Bohr compactification. We shall identify $G$, if it is $MAP$, and $G^{\wedge\wedge}$ with their images in $bG$ (see below Section \ref{sec1}). From Corollary \ref{c02} we obtain:
\begin{cor} \label{c04}
Let an Abelian topological group $(G,\tau)$ be such that $G^\wedge$ is a $s$-group. Then $G^{\wedge\wedge}$ is a dense $\mathfrak{g}$-closed subgroup of $bG$.
\end{cor}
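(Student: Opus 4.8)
The plan is to deduce this directly from Corollary \ref{c02} applied to the compact group $X := bG = \widehat{G}_d^\wedge$. First I would fix notation: write $K = G^\wedge$, which by hypothesis carries a $s$-topology, hence is a $MAP$ $s$-group. The Bohr compactification of $G$ is by definition $bG = \widehat{G}_d^\wedge = K_d^\wedge$ (here $\widehat{G}_d = \widehat{(G_d)} = K$ as abstract groups, since continuous characters of $G$ are precisely the elements of $G^\wedge$, and once we pass to the discrete topology this is just $K_d$). So $bG = K_d^\wedge$ is an infinite compact Abelian group, and $\widehat{bG} = K_d$ algebraically, i.e. $\widehat{bG} = K$ as an abstract group.

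Next I would identify $G^{\wedge\wedge}$ as a subgroup of $bG$. The canonical evaluation map $\alpha_G : G \to G^{\wedge\wedge}$ has image $G^{\wedge\wedge}$, and since $G^{\wedge\wedge} = (G^\wedge)^\wedge = K^\wedge$, the natural continuous isomorphism $i : K_d \to K$ dualizes to $i^\wedge : K^\wedge \to K_d^\wedge = bG$, an injective continuous homomorphism whose image is exactly the copy of $G^{\wedge\wedge}$ sitting inside $bG$ (this is the identification announced in the excerpt just before the statement). Thus, as an abstract subgroup of $bG = K_d^\wedge$, we have $G^{\wedge\wedge} = i^\wedge\big( (K, \tau)^\wedge \big)$, where $\tau$ is the given $s$-topology on $K$.

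Now I would invoke Corollary \ref{c02} with the compact group $X = bG$: its dual $\widehat{X} = K$ (as an abstract group) carries the $MAP$ $s$-topology $\tau$, and $i^\wedge\big( (K,\tau)^\wedge \big)$ is by definition "the dual group of $\widehat{X}$ endowed with some $MAP$ $s$-topology," viewed inside $X$. Hence Corollary \ref{c02} tells us this subgroup — namely $G^{\wedge\wedge}$ — is a dense $\mathfrak{g}$-closed subgroup of $X = bG$, which is exactly the assertion. Alternatively one can run this through Corollary \ref{c03}: if additionally $K$ is sequential the density and $\mathfrak{g}$-closedness follow, but since a $s$-group need not be sequential I would use Corollary \ref{c02} (equivalently Theorem \ref{t12}(i) together with the standard fact that the dual of a $MAP$ group is dense in the Bohr compactification) rather than Corollary \ref{c03}.

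The only genuine point requiring care — the potential obstacle — is the bookkeeping of the two identifications: checking that the "$G^{\wedge\wedge} \hookrightarrow bG$" embedding promised in Section \ref{sec1} really is the map $i^\wedge$ above, and that under the abstract isomorphism $\widehat{bG} \cong K$ the $MAP$ $s$-topology supplied to Corollary \ref{c02} is the originally given topology on $G^\wedge$. Once these identifications are pinned down, the density of $G^{\wedge\wedge}$ in $bG$ is the classical fact that $\alpha_G(G)$ — and a fortiori $G^{\wedge\wedge} \supseteq \alpha_G(G)$ — is dense in the Bohr compactification, and $\mathfrak{g}$-closedness is handed to us verbatim by Corollary \ref{c02}.
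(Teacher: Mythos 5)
Your proposal is correct and follows the paper's own (unstated but evident) route: the paper obtains Corollary \ref{c04} by applying Corollary \ref{c02} to $X=bG$, with $\widehat{bG}=G^\wedge$ algebraically carrying its given $MAP$ $s$-topology and with density supplied by the standard density of $\mathfrak{b}\circ\alpha(G)$ in $bG$, exactly as you do. Two harmless slips worth noting: the identity $\widehat{G}_d=\widehat{(G_d)}$ in your parenthetical is false in general (the paper's $\widehat{G}_d$ means $(\widehat{G})_d=K_d$, which is what you actually use), and the $MAP$ property of $K=G^\wedge$ does not follow from its being a $s$-group but from its being a dual group, since the evaluation characters $\alpha(g)$ separate its points.
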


Further, we show:
\begin{pro} \label{p01}
Every reflexive Polish Abelian group (in particular, every  separable locally convex Banach space or separable metrizable locally compact Abelian group) is $\mathfrak{g}$-closed in its Bohr compactification.
\end{pro}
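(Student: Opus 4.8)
The plan is to deduce Proposition \ref{p01} from Corollary \ref{c04} by showing that for a reflexive Polish Abelian group $(G,\tau)$, the double dual $G^{\wedge\wedge}$ is canonically $G$ itself (by reflexivity), and that $G^\wedge$ is a $s$-group. Once we know $G^\wedge$ is a $s$-group, Corollary \ref{c04} immediately gives that $G^{\wedge\wedge} = G$ is a dense $\mathfrak{g}$-closed subgroup of $bG$, which is exactly the assertion. So the whole proposition reduces to one fact: \emph{the dual group of a reflexive Polish Abelian group is a $s$-group}.

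To get that, I would use the quoted observation (from \cite{Ga3}) that every sequential topological group is a $s$-group, together with the standard duality fact that the dual of a metrizable (in particular Polish) Abelian group, taken with the compact-open topology, is a $k_\omega$-space, hence sequential. Concretely: if $G$ is metrizable with a countable base $\{U_n\}$ at zero, then $\widehat{G}$ is the union of the polars $U_n^\triangleright$, each of which is compact (equicontinuous and closed in the product topology on characters), and $G^\wedge$ carries the topology of this countable ascending union of compacta; a $k_\omega$-space is sequential. Therefore $G^\wedge$ is sequential, hence a $s$-group, and Corollary \ref{c04} applies. The parenthetical cases --- separable Banach spaces and separable metrizable locally compact Abelian groups --- are then covered because these are all reflexive Polish groups (separable Banach spaces are reflexive as topological groups by the Hahn--Banach theorem; second-countable LCA groups are reflexive by classical Pontryagin duality and Polish by metrizability and completeness).

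I would organize the write-up as: (1) recall $G$ reflexive means the canonical map $\alpha_G : G \to G^{\wedge\wedge}$ is a topological isomorphism, so it suffices to show $G^{\wedge\wedge}$ is $\mathfrak{g}$-closed in $bG = \widehat{G^\wedge}^\wedge_d{}^\wedge$ — wait, more precisely $bG$ is the Bohr compactification and the identification of $G^{\wedge\wedge}$ inside $bG$ is the one fixed in Section \ref{sec1}; (2) show $G^\wedge$ is sequential using the $k_\omega$ structure coming from metrizability of $G$; (3) invoke \cite{Ga3} to conclude $G^\wedge \in \mathbf{SA}$; (4) apply Corollary \ref{c04} with the roles set so that the group ``$G$'' of that corollary is our $G^\wedge$, giving that $(G^\wedge)^{\wedge\wedge} = G^{\wedge\wedge}$ is a dense $\mathfrak{g}$-closed subgroup of $b(G^\wedge)$; (5) match $b(G^\wedge)$ with $bG$ and $G^{\wedge\wedge}$ with $G$ via reflexivity to finish.

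The main obstacle I anticipate is purely bookkeeping about which group plays which role and matching the various Bohr compactifications: Corollary \ref{c04} is phrased for a group whose \emph{dual} is a $s$-group, so I must apply it to $G^\wedge$ (whose dual $G^{\wedge\wedge} \cong G$ is Polish, but that is not what I need — I need $(G^\wedge)^\wedge = G^{\wedge\wedge}$ to embed in $b(G^\wedge)$), and then use reflexivity to identify $b(G^\wedge) = \widehat{(G^\wedge)}_d{}^\wedge$ with $bG$ and the image of $G^{\wedge\wedge}$ with the canonical image of $G$. The one genuinely substantive point is the sequentiality (equivalently, the $k_\omega$-property) of the dual of a Polish group; if one prefers to avoid invoking general $k_\omega$ theory, one can instead note directly that $G^\wedge$ is a countable ascending union of metrizable compact subsets $U_n^\triangleright$ with the inductive-limit topology, which is sequential by Theorem \ref{t001} since such a space is a quotient of the topological sum $\bigsqcup_n U_n^\triangleright$, a metric space.
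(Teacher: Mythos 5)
Your plan is essentially the paper's own proof: both arguments reduce the proposition to the single fact that $G^\wedge$ is sequential (hence a $s$-group by \cite{Ga3}), and then invoke the duality machinery of Theorem~\ref{t12} together with reflexivity to conclude that the image of $G=G^{\wedge\wedge}$ in $bG$ is $\mathfrak{g}$-closed. The paper obtains sequentiality of $G^\wedge$ by citing \cite[Theorem 2.4]{ChMPT} (the dual of a separable metrizable Abelian group is sequential) and handles the identification of $G$ with $G^{\wedge\wedge}$ through Proposition~\ref{p12} (reflexivity gives $\tau_w=\tau_{w^\ast}$, hence $\mathfrak{g}_{bG}(\mathfrak{b}\circ\alpha(G))=\mathfrak{g}_{bG}(\mathfrak{b}(G^{\wedge\wedge}))$); you instead prove sequentiality directly from the hemicompact $k$-space structure of the dual of a metrizable group and identify $G$ with $G^{\wedge\wedge}$ via surjectivity of $\alpha$. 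Both routes are legitimate, and your direct argument is essentially what lies behind the cited theorem.

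Two points need tightening. First, ``a $k_\omega$-space is sequential'' is false in general (a compact non-sequential space such as $\beta\mathbb{N}$ is a $k_\omega$-space); what you actually need is that the polars $U_n^{\triangleright}$ are \emph{metrizable} compacta, which is where separability of $G$ enters (pointwise convergence on a countable dense subset metrizes each equicontinuous polar). Your closing remark, which realizes $G^\wedge$ as a quotient of the metric space $\bigsqcup_n U_n^{\triangleright}$ and invokes Theorem~\ref{t001}, is the correct version — keep that and drop the $k_\omega$ slogan. Note also that the assertion that $G^\wedge$ carries the inductive-limit topology of the polars is itself a nontrivial theorem (Au{\ss}enhofer, Chasco), so this step still amounts to a citation. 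Second, in step (4) you apply Corollary~\ref{c04} with the role of its group played by $G^\wedge$; that is the wrong instantiation — it would require $G^{\wedge\wedge}$ to be a $s$-group and would place $(G^\wedge)^{\wedge\wedge}=G^{\wedge\wedge\wedge}$ inside $b(G^\wedge)$ rather than $G^{\wedge\wedge}$ inside $bG$. The corollary must be applied to $G$ itself, whose dual is the $s$-group; your opening paragraph already states the correct application, so this is only a bookkeeping correction, not a gap in the method.
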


In \cite{Ga3}, a general criterion to be a $s$-group is given. In Section \ref{sec2} we obtain another  analog of Franklin's theorem \ref{t001} for  Abelian $s$-groups.
Let $\{ G_i \}_{i\in I}$, where $I$ is a non-empty set of indices, be a family of Abelian groups. The direct sum of  $G_i$ is denoted by
\[
\sum_{i\in I} G_i :=\left\{ (g_i)_{i\in I} \in \prod_{i\in I} G_i : \; g_i = 0 \mbox{ for almost all } i \right\}.
\]
We denote by $j_k $ the natural including of $G_k$ into $\sum_{i\in I} G_i$, i.e.:
\[
j_k (g)=(g_i)\in \sum_{i\in I} G_i, \mbox{ where } g_i =g  \mbox{ if } i=k \mbox{ and } g_i =0  \mbox{ if } i\not= k.
\]
Let $G_i =(G_i, \tau_i)$ be an Abelian $s$-group for every $i\in I$. It is easy to show that the set $\bigcup_{i\in I} j_i \left( S(G_i, \tau_i)\right)$
is a $TS$-set of sequences in $\sum_{i\in I} G_i$ (see Section \ref{sec3}).
\begin{defin} \label{d04}
{\it Let $\{ (G_i, \tau_i)\}_{i\in I}$ be a non-empty family of Abelian $s$-groups. The group $\sum_{i\in I} G_i$ endowed with the finest Hausdorff group topology $\tau^s$ in which every sequence of $\bigcup_{i\in I} j_i \left( S(G_i, \tau_i)\right)$ converges to zero is called the {\bf $s$-sum} of $G_i$ and it is denoted by ${s-\sum}_{i\in I} G_i$.}
\end{defin}
By definition, the $s$-sum of $s$-groups is a $s$-group either. Note that the $s$-sum of $s$-groups can be defined also for non-Abelian $s$-groups.

Set $\mathbb{Z}^{\mathbb{N}}_0 =\{ (n_1,\dots, n_k, 0,\dots) | n_j \in \mathbb{Z} \}$ and $\mathbf{e} =\{ e_n\} \in\mathbb{Z}_0^{\mathbb{N}}$, where $e_1 =(1,0,0,\dots), e_2 = (0,1,0,\dots), \dots$. Then $\mathbf{e}$ is a $T$-sequence in $\mathbb{Z}^{\mathbb{N}}_0 $. The following theorem  gives a characterization of Abelian $s$-groups and it can be considered as a natural analog of Franklin's theorem \ref{t001}:
\begin{theorem} \label{t06}
Let $(X,\tau)$ be a non-discrete Hausdorff Abelian topological group. The following statements are equivalent:
\begin{enumerate}
\item[{\rm (i)}] $(X,\tau)$ is a $s$-group;
\item[{\rm (ii)}] $(X,\tau)$ is a quotient group of the $s$-sum of a non-empty family of copies of $(\mathbb{Z}_0^\mathbb{N} , \mathbf{e})$. Moreover, a quotient map may be chosen to be sequence-covering.
\end{enumerate}
\end{theorem}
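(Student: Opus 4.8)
The plan is to prove the two implications of Theorem \ref{t06} separately, with the bulk of the work going into (i)$\Rightarrow$(ii).

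\textbf{Direction (ii)$\Rightarrow$(i).} This is the easy half. The group $(\mathbb{Z}_0^\mathbb{N}, \mathbf{e})$ is a $s$-group by definition (it carries the finest group topology in which the single $T$-sequence $\mathbf{e}$ converges to zero, so it is $\tau_S$ for $S = \{\mathbf{e}\}$). By Definition \ref{d04} the $s$-sum of any family of copies of it is again a $s$-group. Finally, the class $\mathbf{SA}$ of Abelian $s$-groups is closed under taking quotients (stated in the excerpt, from \cite{Ga3}), so any quotient of such an $s$-sum is a $s$-group. Hence (ii) implies (i).

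\textbf{Direction (i)$\Rightarrow$(ii).} Suppose $(X,\tau) = (X, \tau_S)$ for some $S \in \mathcal{TS}(X)$. For each sequence $\mathbf{u} = \{u_n\} \in S$ I would define a homomorphism $\varphi_\mathbf{u} : \mathbb{Z}_0^\mathbb{N} \to X$ sending the basis element $e_n$ to $u_n$; since $u_n \to 0$ in $\tau_S$ and $(\mathbb{Z}_0^\mathbb{N}, \mathbf{e})$ carries the \emph{finest} group topology making $e_n \to 0$, each $\varphi_\mathbf{u}$ is continuous. Assembling these over all $\mathbf{u} \in S$ gives, by the universal property of the $s$-sum, a single continuous homomorphism
\[
\Phi : P := {s\text{-}\sum}_{\mathbf{u}\in S} (\mathbb{Z}_0^\mathbb{N}, \mathbf{e}) \longrightarrow (X, \tau_S),
\]
which is surjective because $S$ generates $X$ as a group (every element of $X$ is a finite $\mathbb{Z}$-combination of elements appearing in sequences of $S$ — one should first enlarge $S$, harmlessly, so that this holds, e.g. by adding the trivial sequences $\{g, 0, 0, \dots\}$ for $g \in X$, which does not change $\tau_S$). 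The crux is then to show two things: that $\Phi$ is a quotient map, and that it is sequence-covering. For the quotient property, one uses that $\tau_S$ is by construction the finest group topology on $X$ in which all sequences of (the enlarged) $S$ converge to zero: if $U \subseteq X$ has $\Phi^{-1}(U)$ open in $P$, then pulling back along each $\varphi_\mathbf{u}$ shows $\varphi_\mathbf{u}^{-1}(U)$ is open, which forces $u_n \in U$ eventually for every $\mathbf{u} \in S$; combined with the maximality defining $\tau_S$ this should give that $U$ is $\tau_S$-open. For sequence-covering, given $y_k \to 0$ in $(X,\tau_S)$, the sequence $\{y_k\}$ itself lies in $S(X,\tau_S) \supseteq S$ after the enlargement (or at least in the $S$-generated structure), so it appears as the image under $\Phi$ of a coordinate copy of $\mathbf{e}$, and one reads off a preimage sequence converging to $0$ in $P$.

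\textbf{Main obstacle.} The delicate point is verifying that $\Phi$ is genuinely a \emph{quotient} map rather than merely a continuous surjection — i.e. that the quotient topology $P/\ker\Phi$ coincides with $\tau_S$ and not something strictly finer. This requires matching the universal ("finest group topology with prescribed convergent sequences") property defining $\tau_S$ on $X$ against the corresponding property on $P$, passed through the quotient; one must check that a sequence converging to $0$ in the quotient $P/\ker\Phi$ lifts (at least along a subsequence, which suffices for the finest-topology argument via \cite{ZP1, ZP2}-type lemmas) to a sequence converging to $0$ in $P$, and that such lifted sequences are exactly accounted for by $S$. A secondary technical nuisance is the harmless enlargement of $S$ to a set closed enough under the relevant operations (adding trivial sequences, perhaps subsequences and sums) to make both surjectivity and the sequence-covering lifting transparent, while invoking the earlier-noted fact that such operations do not alter $\tau_S$. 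Once the quotient claim is secured, sequence-covering follows with little extra effort, and Theorem \ref{t001} (Franklin) is the structural template one is imitating: metric spaces are replaced by the concrete ``free'' $s$-group $(\mathbb{Z}_0^\mathbb{N}, \mathbf{e})$, and arbitrary quotients of disjoint sums of them by arbitrary quotients of $s$-sums.
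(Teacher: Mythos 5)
Your overall strategy (index copies of $(\mathbb{Z}_0^\mathbb{N},\mathbf{e})$ by null sequences, send $e_n\mapsto u_n$, assemble via the $s$-sum) is the same as the paper's, and the direction (ii)$\Rightarrow$(i) is fine. But in (i)$\Rightarrow$(ii) there are two genuine gaps, one of which is a step that fails as stated. First, your argument for the quotient property is a non sequitur: from ``$\Phi^{-1}(U)$ open, hence $u_n\in U$ eventually for every $\mathbf{u}\in S$'' you cannot conclude that $U$ is $\tau_S$-open, even invoking maximality --- $\tau_S$ is the finest \emph{group} topology in which the sequences of $S$ are null, not the topology whose opens are the sets eventually absorbing those sequences (the latter collection is in general strictly finer and need not be a group topology). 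The correct route is different: the quotient topology $\tau_q$ on $X\cong P/\ker\Phi$ is a Hausdorff group topology ($\ker\Phi$ is closed since $\Phi$ is continuous into the Hausdorff $(X,\tau_S)$) in which every $\mathbf{u}\in S$ converges to zero, so $\tau_q\subseteq\tau_S$ by maximality, while continuity of $\Phi$ gives $\tau_S\subseteq\tau_q$. The paper packages exactly this as \cite[Theorem 1.11]{Ga3} ($G/\ker p\cong(X,\tau_{p(S)})$) and you need that statement, not the sequentially-open argument.

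Second, your sequence-covering step only works if the index set contains \emph{every} null sequence of $(X,\tau_S)$, and the enlargement you actually describe (adding trivial sequences $\{g,0,0,\dots\}$) buys surjectivity but nothing more: $S(X,\tau_S)$ is in general much larger than $S$ together with trivial sequences, so an arbitrary null sequence $\{y_k\}$ need not appear as a coordinate, and then you have no lift. The fix is to index by all of $I=S(X,\tau)$ from the start --- which is what the paper does in Theorem \ref{t41} --- and to invoke the fact that $\tau_{S(X,\tau)}=\tau$ for an $s$-group (Proposition 1.7 of \cite{Ga3}); then the lift of $\{y_k\}$ is simply $j_{\mathbf{y}}(e_k)$ in the coordinate indexed by $\mathbf{y}=\{y_k\}$. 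Without that, you would need a structural description of the null sequences of $\tau_S$ in terms of $S$ (the analogue of Proposition \ref{p31} for arbitrary $TS$-sets), which is a substantial theorem you have not supplied. Note also that the paper interposes the groups $(\langle\mathbf{u}\rangle,\mathbf{u})$ and splits the work into Theorem \ref{t02} (each $(\langle\mathbf{u}\rangle,\mathbf{u})$ is a sequence-covering quotient of $(\mathbb{Z}_0^\mathbb{N},\mathbf{e})$, which is where the hard Proposition \ref{p31} is used), Theorem \ref{t41}, and Proposition \ref{p42}; your one-step composite is workable in principle, but only after both gaps above are closed.
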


Let $G$ be an infinite Abelian group.  In Section \ref{sec3} we consider the case of countable $S\in \mathcal{TS}(G)$. In this case,  the topology $\tau_S$ has a simple description (see Proposition \ref{p1}). The main result of the section is the following:
\begin{theorem} \label{t4}
{\it Let $G$ be a countably infinite Abelian group and let  $S=\{ \mathbf{u}_n\}_{n\in\omega} \in \mathcal{TS}(G)$.  Then $(G,\tau_S)^\wedge$ is a Polish group. More precisely, $(G,\tau_S)^\wedge$ embeds onto a closed subgroup of the Polish group $\prod_{n\in\omega} (G,\mathbf{u}_n)^\wedge$. }
\end{theorem}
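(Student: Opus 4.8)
The plan is to realise $(G,\tau_S)^\wedge$ as a closed subgroup of the product $P:=\prod_{n\in\omega}(G,\mathbf{u}_n)^\wedge$ via the natural ``diagonal'' homomorphism, and then to invoke the fact that a closed subgroup of a Polish group is Polish. Since $G$ is countably infinite, $X:=G_d^\wedge$ is a metrizable compact Abelian group, so by Theorem \ref{t01} (together with \cite[Corollary 1]{Ga1}) each $(G,\mathbf{u}_n)^\wedge$ is, as a topological group, the characterized subgroup $\bigl(s_{\mathbf{u}_n}(X)\bigr)_{\mathbf{u}_n}$ and hence a Polish group; therefore $P$, being a countable product of Polish groups, is Polish, and it suffices to produce the asserted closed embedding.

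I would first build the homomorphism. For every $n$ the topology $\tau_S$ is coarser than $\tau_{\mathbf{u}_n}$ (both make the sequence $\mathbf{u}_n$ converge to $0$, and $\tau_{\mathbf{u}_n}$ is the finest group topology with this property), so the identity map $(G,\tau_{\mathbf{u}_n})\to(G,\tau_S)$ is a continuous isomorphism; let $r_n\colon(G,\tau_S)^\wedge\to(G,\mathbf{u}_n)^\wedge$ be its dual, which simply regards a continuous character of $(G,\tau_S)$ as a continuous character of the finer group $(G,\mathbf{u}_n)$. Each $r_n$ is a continuous monomorphism, hence so is $\Phi\colon(G,\tau_S)^\wedge\to P$, $\Phi(\chi)=\bigl(r_n(\chi)\bigr)_{n\in\omega}$. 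Its image equals the diagonal $D:=\{(\psi_n)_n\in P:\psi_i=\psi_j\text{ for all }i,j\}$, equality being taken inside $X=\mathrm{Hom}(G,\mathbb{T})$: the inclusion $\Phi\bigl((G,\tau_S)^\wedge\bigr)\subseteq D$ is immediate, while if $(\psi_n)\in D$ has common value $\psi$, then $\psi\in(G,\mathbf{u}_n)^\wedge=s_{\mathbf{u}_n}(X)$ for every $n$, whence $\psi\in\bigcap_n s_{\mathbf{u}_n}(X)=s_S(X)=i_S^\wedge\bigl((G,\tau_S)^\wedge\bigr)$ by Theorem \ref{t1}; picking $\chi\in(G,\tau_S)^\wedge$ with $i_S^\wedge(\chi)=\psi$ gives $\Phi(\chi)=(\psi_n)_n$. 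Next I would check that $D$ is closed in $P$: composing the $n$-th coordinate projection $P\to(G,\mathbf{u}_n)^\wedge$ with the continuous inclusion $(G,\mathbf{u}_n)^\wedge=\bigl(s_{\mathbf{u}_n}(X)\bigr)_{\mathbf{u}_n}\hookrightarrow X$ (the dual of $G_d\to(G,\mathbf{u}_n)$) yields a continuous map $P\to X$, and since $X$ is Hausdorff each set $\{(\psi_n)\in P:\psi_i=\psi_j\}$ is closed; $D$ is their intersection over all pairs $i,j$.

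The one substantial point left is that $\Phi$ is open onto $D$, i.e.\ that the compact-open topology of $(G,\tau_S)^\wedge$ is already generated by the sets $r_n^{-1}(W)$ with $W$ open in $(G,\mathbf{u}_n)^\wedge$, $n\in\omega$. As the sets $W(K,V)=\{\chi:\chi(K)\subseteq V\}$, with $K\subseteq G$ a $\tau_S$-compact subset and $V$ a neighbourhood of $1$ in $\mathbb{T}$, form a neighbourhood base at $0$ of $(G,\tau_S)^\wedge$, and since $r_n^{-1}\bigl(W(K_n,V)\bigr)=\{\chi\in(G,\tau_S)^\wedge:\chi(K_n)\subseteq V\}$ for any $\tau_{\mathbf{u}_n}$-compact $K_n$, this reduces to the key assertion that \emph{every $\tau_S$-compact subset $K$ of $G$ lies in a finite union $\bigcup_{n\in F}K_n$ with each $K_n$ compact in $(G,\mathbf{u}_n)$} --- indeed then $\bigcap_{n\in F}r_n^{-1}\bigl(W(K_n,V)\bigr)\subseteq W(K,V)$. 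I expect this assertion to be the genuine obstacle. To prove it I would combine the explicit description of $\tau_S$ for a countable $S$ furnished by Proposition \ref{p1} with the fact that $(G,\tau_S)$ is sequential and complete: first show that a $\tau_S$-null sequence becomes, after discarding finitely many terms, a $\tau_{\mathbf{u}_n}$-null sequence for some single $n$, and then, using that in the sequential group $(G,\tau_S)$ compactness is detected by convergent sequences, conclude that a $\tau_S$-compact set cannot evade the compacta supplied by finitely many of the groups $(G,\mathbf{u}_n)$. Once the key assertion is available, $\Phi$ is a topological isomorphism of $(G,\tau_S)^\wedge$ onto the closed subgroup $D$ of the Polish group $P$, so $(G,\tau_S)^\wedge$ is Polish and embeds onto a closed subgroup of $\prod_{n\in\omega}(G,\mathbf{u}_n)^\wedge$, as required.
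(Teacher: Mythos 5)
Your overall architecture is, up to notation, the paper's proof: the paper realizes $P=\prod_{n\in\omega}(G,\mathbf{u}_n)^\wedge$ as the dual of the $s$-sum $G'={s-\sum}_{n\in\omega}(G,\tau_{\mathbf{u}_n})$, and your diagonal map $\Phi$ is precisely $p^\wedge$ for the summation map $p\colon G'\to G$, your $D$ being the annihilator $(\ker p)^\perp$. The algebraic half of your argument (image of $\Phi$ equals $D$ via Theorem \ref{t1}, $D$ closed in $P$) is correct. The problem is the step you yourself single out as the obstacle: the assertion that every $\tau_S$-compact $K\subseteq G$ lies in a \emph{finite union} $\bigcup_{n\in F}K_n$ with each $K_n$ compact in $(G,\mathbf{u}_n)$ is false in general. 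Take $G=\mathbb{Z}_0^{\mathbb{N}}\oplus\mathbb{Z}_0^{\mathbb{N}}$, $\mathbf{u}_0=\{(e_k,0)\}$, $\mathbf{u}_1=\{(0,e_k)\}$, $S=\{\mathbf{u}_0,\mathbf{u}_1\}$. Then $K=(\mathbf{u}_0\cup\{0\})+(\mathbf{u}_1\cup\{0\})$ is $\tau_S$-compact, but every compact subset of $(G,\mathbf{u}_0)$ meets only finitely many cosets of the open subgroup $\langle\mathbf{u}_0\rangle=\mathbb{Z}_0^{\mathbb{N}}\oplus 0$ (so its second components form a finite set), and symmetrically for $\mathbf{u}_1$; no finite union of such sets can contain all the points $(e_i,e_j)$. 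The same example kills your proposed intermediate step as well: the $\tau_S$-null sequence $\{(e_k,e_k)\}$ is not, even after discarding finitely many terms, null in any single $\tau_{\mathbf{u}_n}$, since it never enters the open subgroup $\langle\mathbf{u}_n\rangle$.

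What is true --- and what the paper uses, quoting \cite[Theorem 2.7]{Ga3} --- is that every $\tau_S$-compact set is contained in a finite \emph{sum} of compacta of the form $\bigcup_{i=0}^n\left(\mathbf{u}_i\cup(-\mathbf{u}_i)\right)$. This weaker covering statement still suffices for openness of $\Phi$ onto $D$: since characters are homomorphisms, one has
\[
\bigcap_{i=1}^m r_{n_i}^{-1}\left(W(K_{n_i},V_i)\right)\subseteq W\left(K_{n_1}+\cdots+K_{n_m},V\right)
\]
as soon as $V_1\cdots V_m\subseteq V$, so basic neighbourhoods of $(G,\tau_S)^\wedge$ are refined by traces of basic neighbourhoods of $P$. (Equivalently: a compact subset of $G'$ in the box topology $\tau_{S'}=\tau^r$ is a finite sum of compacta of the summands, and the sum statement says exactly that $p$ is compact-covering, whence $p^\wedge$ is an embedding by \cite[Lemma 5.17]{Aus}.) So the repair is to replace ``finite union'' by ``finite sum'' and shrink the target neighbourhood $V$ multiplicatively; as written, your reduction rests on a false lemma, and the route you sketch toward it would not produce the sum version that is actually needed. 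Note also that the openness of $\Phi$ cannot be obtained for free from an open mapping theorem, since $(G,\tau_S)^\wedge$ is not yet known to be Polish at that stage.
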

As a corollary we prove the following two propositions (see Problems 2.21 and 2.22 \cite{Ga2}):
\begin{pro} \label{p2}
Let $\{ X_n\}_{n\in\omega} $ be a sequence of second countable locally compact Abelian groups. Then there is a complete countably infinite Abelian $MAP$ $s$-group $(G,\tau)$ such that
\[
(G,\tau)^\wedge = \prod_{n\in\omega} X_n.
\]
\end{pro}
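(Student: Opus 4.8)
The plan is to realize $\prod_{n\in\omega}X_n$ as $(G,\tau_S)^\wedge$, where $G$ is a suitable countable direct sum and $S$ is a countable family consisting of one $T$-sequence per summand, and then to read off the required properties (completeness, $MAP$, and the topology of the dual) from Theorems~\ref{t1} and~\ref{t4}. Observe first that $\prod_n X_n$ must be infinite for the statement to hold at all, since an infinite $MAP$ group always has infinite dual (otherwise it would embed into a finite group via $X\to X^{\wedge\wedge}$); so this is tacitly assumed.

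The first and essential ingredient is a \emph{realization lemma}: every second countable locally compact Abelian group $X$ is topologically isomorphic to $(G_0,\mathbf v)^\wedge$ for some countably infinite Abelian group $G_0$ and some $T$-sequence $\mathbf v$ in $G_0$, and moreover $(G_0,\mathbf v)$ may be taken $MAP$ (when $X$ is finite one takes $G_0=\widehat X$ discrete and $\mathbf v$ trivial). By Theorem~\ref{t01}, $(G_0,\mathbf v)^\wedge$ is always a characterized subgroup $s_{\mathbf v}((G_0)_d^\wedge)$ of a metrizable compact group carrying its canonical Polish topology $H_{\mathbf v}$, so the content of the lemma is exactly that $X$ is \emph{characterizable}; I would obtain this from the known description of characterizable Polish groups, or, for a self-contained proof, from the structure theory of second countable LCA groups, handling the factors $\mathbb R$, $\mathbb T^{b}$ ($b\le\omega$), countable discrete groups and metrizable profinite groups and then combining. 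Passing from any realization $X\cong(G_0,\mathbf v)^\wedge$ to $G_0/\mathbf n(G_0,\mathbf v)$ with the topology $\tau_{\overline{\mathbf v}}$ makes it $MAP$ without changing the dual: with $N=\mathbf n(G_0,\mathbf v)=X^\perp$, Theorem~\ref{t01} gives $(G_0/N,\tau_{\overline{\mathbf v}})^\wedge=s_{\overline{\mathbf v}}(N^\perp)=s_{\mathbf v}((G_0)_d^\wedge)\cap N^\perp=X$, and $X$ is dense in $N^\perp=X^{\perp\perp}$. \textbf{I expect this realization lemma --- equivalently, the characterizability of every second countable LCA group by a single $T$-sequence --- to be the main obstacle}; everything after it is bookkeeping with the duality formulas.

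For the construction, pick for each $n$ a countable Abelian $MAP$ group $(G_n,\mathbf v_n)$ with $(G_n,\mathbf v_n)^\wedge\cong X_n$, set $G:=\bigoplus_{n\in\omega}G_n$ (countably infinite), let $j_n\colon G_n\hookrightarrow G$ be the inclusions, put $\mathbf u_n:=j_n(\mathbf v_n)$, $S:=\{\mathbf u_n:n\in\omega\}$ and $\tau:=\tau_S$. Then $S\in\mathcal{TS}(G)$, because the subgroup topology induced on $G$ by the Hausdorff product $\prod_n(G_n,\mathbf v_n)$ is a Hausdorff group topology in which every $\mathbf u_n$ converges to $0$, and $(G,\tau_S)$ is a complete sequential $s$-group since $S$ is a \emph{countable} $TS$-set. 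Identifying $G_d^\wedge=\prod_n(G_n)_d^\wedge$ and using $(\chi,j_m(w))=(\chi_m,w)$, one gets $s_{\mathbf u_m}(G_d^\wedge)=\{(\chi_n)_n:\chi_m\in s_{\mathbf v_m}((G_m)_d^\wedge)\}$; hence Theorem~\ref{t1}(1) gives $(G,\tau_S)^\wedge=s_S(G_d^\wedge)=\bigcap_m s_{\mathbf u_m}(G_d^\wedge)=\prod_n s_{\mathbf v_n}((G_n)_d^\wedge)=\prod_n X_n$ as abstract groups, while Theorem~\ref{t1}(2) gives $\mathbf n(G,\tau_S)=\big[\prod_n s_{\mathbf v_n}((G_n)_d^\wedge)\big]^\perp=0$, because each factor $s_{\mathbf v_n}((G_n)_d^\wedge)$ is dense in $(G_n)_d^\wedge$ ($(G_n,\mathbf v_n)$ being $MAP$) and a product of dense subgroups is dense; thus $(G,\tau_S)$ is $MAP$.

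It remains to check that the algebraic identification $(G,\tau_S)^\wedge=\prod_n X_n$ is topological. By Theorem~\ref{t4}, $(G,\tau_S)^\wedge$ embeds onto a closed subgroup of $\prod_n(G,\mathbf u_n)^\wedge$, so its topology is the one induced by the natural map $\Phi\colon(\chi_m)_m\mapsto\big((\chi_m)_m\ \text{viewed in}\ (G,\mathbf u_k)^\wedge\big)_k$. A short sublemma shows $(G,\mathbf u_k)=(G,j_k(\mathbf v_k))$ equals the finite product $(G_k,\mathbf v_k)\times\big(\bigoplus_{m\ne k}G_m\big)_d$ topologically: the product topology on the right is Hausdorff with $j_k(\mathbf v_k)\to 0$, hence coarser than $\tau_{j_k(\mathbf v_k)}$, whereas the inclusions $(G_k,\mathbf v_k)\to(G,\tau_{j_k(\mathbf v_k)})$ and $\big(\bigoplus_{m\ne k}G_m\big)_d\to(G,\tau_{j_k(\mathbf v_k)})$ are continuous (the first because $\tau_{j_k(\mathbf v_k)}$ restricted to $G_k$ is a Hausdorff group topology in which $\mathbf v_k$ converges, hence $\le\tau_{\mathbf v_k}$), so $\tau_{j_k(\mathbf v_k)}$ is coarser than the product topology as well. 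Dualizing, $(G,\mathbf u_k)^\wedge\cong X_k\times\prod_{m\ne k}(G_m)_d^\wedge$, and under $\Phi$ an element $(\chi_m)_m$ of $\prod_m X_m$ has $k$-th coordinate $\big(\chi_k,(\chi_m)_{m\ne k}\big)$. Hence the topology of $(G,\tau_S)^\wedge$ is the coarsest making each map $(\chi_m)_m\mapsto\big(\chi_k,(\chi_m)_{m\ne k}\big)$ continuous; since each inclusion $X_m=s_{\mathbf v_m}((G_m)_d^\wedge)\hookrightarrow(G_m)_d^\wedge$ is continuous ($H_{\mathbf v_m}$ refining the subgroup topology), the coordinates landing in the $(G_m)_d^\wedge$ add nothing new, and the coarsest such topology is exactly the product topology of the $X_n$. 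Therefore $(G,\tau_S)^\wedge=\prod_{n\in\omega}X_n$ topologically, which completes the proof; the only genuine difficulty is the realization lemma, the rest being a routine computation with Theorems~\ref{t1} and~\ref{t4}.
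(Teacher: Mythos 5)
Your construction is exactly the paper's: realize each $X_n$ as the dual of a countably infinite group equipped with a single $TB$-sequence (the paper simply cites \cite{Ga2} for this realization lemma, which you correctly single out as the only non-routine input; note that by Proposition \ref{p1}(i) your ``$T$-sequence with $MAP$ refinement'' is precisely a $TB$-sequence), then take $G=\sum_n G_n$ with $S=\{j_n(\mathbf{v}_n)\}$. Where you diverge is in identifying the topology of the dual. The paper does this through Proposition \ref{p21}: by Proposition \ref{p20} the topology $\tau_S$ on the direct sum coincides with the rectangular (box) topology $\tau^r$, whence Kaplan's duality theorem \cite{Kap} gives $(G,\tau_S)^\wedge=\prod_n(G_n,\mathbf{v}_n)^\wedge$ topologically in one stroke, and Theorem 2.7 of \cite{Ga3} gives completeness and sequentiality. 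You instead invoke Theorem \ref{t4} to embed $(G,\tau_S)^\wedge$ into $\prod_k(G,\mathbf{u}_k)^\wedge$, prove the splitting $(G,j_k(\mathbf{v}_k))\cong(G_k,\mathbf{v}_k)\times\bigl(\sum_{m\ne k}G_m\bigr)_d$, and check that the resulting initial topology on $\prod_m X_m$ is the product topology; this computation is correct (the maximality argument for the splitting and the observation that the coordinates landing in $(G_m)_d^\wedge$ are dominated by the $X_m$-topologies both hold), but it is more laborious than the box-topology route. Your verification of $MAP$ via Theorem \ref{t1}(2) and density of $\prod_n s_{\mathbf{v}_n}((G_n)_d^\wedge)$ is also fine, and matches what the paper gets for free from each $(G_n,\mathbf{v}_n)$ being $MAP$. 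In short: correct, same skeleton, with a heavier but valid substitute for the $\tau_S=\tau^r$ plus Kaplan step.
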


\begin{pro} \label{p3}
There is a complete sequential $MAP$ group topology $\tau$ on $\mathbb{Z}^{\mathbb{N}}_0$ such that
\[
(\mathbb{Z}^{\mathbb{N}}_0 ,\tau)^\wedge = \mathbb{R}^{\mathbb{N}}.
\]
\end{pro}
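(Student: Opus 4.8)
The plan is to realize $\mathbb{R}^{\mathbb{N}}$ as the dual of a suitable $(\mathbb{Z}^{\mathbb{N}}_0,\tau_S)$ with $S$ a \emph{countable} $TS$-set, via Theorem~\ref{t4}; so this is the sharpening of Proposition~\ref{p2} in the case $X_n=\mathbb{R}$ (note $\widehat{\mathbb{R}}=\mathbb{R}$) in which the realizing group is moreover required to be free. Since a countable $TS$-set always produces a complete sequential topology by \cite{Ga3}, and since (Theorem~\ref{t1}(2)) being $MAP$ amounts to $s_S(G_d^\wedge)$ being dense in the compact group $G_d^\wedge$, it will be enough to arrange a splitting $\mathbb{Z}^{\mathbb{N}}_0=\bigoplus_{n\in\omega}G_n$ with each $G_n$ free of countable rank (so $G_n\cong\mathbb{Z}^{\mathbb{N}}_0$ and $(G_n)_d^\wedge\cong\mathbb{T}^{\mathbb{N}}$), together with, for each $n$, a countable $TS$-set $S_n$ in $G_n$ such that $(G_n,\tau_{S_n})$ is complete and $MAP$ and $(G_n,\tau_{S_n})^\wedge\cong\mathbb{R}$ topologically.

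Granting this, set $S=\bigcup_n S_n$, each $\mathbf{u}\in S_n$ being regarded as a sequence in $\mathbb{Z}^{\mathbb{N}}_0$ supported in the summand $G_n$; then $S$ is a countable $TS$-set (the direct-sum topology $\bigoplus_n(G_n,\tau_{S_n})$ witnesses this). Because $(\mathbb{Z}^{\mathbb{N}}_0)_d^\wedge=\prod_n(G_n)_d^\wedge$ and a sequence supported in $G_n$ constrains only the $n$-th coordinate, Theorem~\ref{t1}(1) gives
\[
(\mathbb{Z}^{\mathbb{N}}_0,\tau_S)^\wedge=s_S\bigl((\mathbb{Z}^{\mathbb{N}}_0)_d^\wedge\bigr)=\prod_{n}s_{S_n}\bigl((G_n)_d^\wedge\bigr)=\prod_{n}(G_n,\tau_{S_n})^\wedge\cong\mathbb{R}^{\mathbb{N}}
\]
as abstract groups; moreover Theorem~\ref{t01} identifies each $(\mathbb{Z}^{\mathbb{N}}_0,\mathbf{u})^\wedge$, $\mathbf{u}\in S_n$, with $(G_n,\mathbf{u})^\wedge\times\prod_{k\neq n}(G_k)_d^\wedge$, so that the closed embedding of Theorem~\ref{t4} into $\prod_{\mathbf{u}\in S}(\mathbb{Z}^{\mathbb{N}}_0,\mathbf{u})^\wedge$ carries the Polish topology of $(\mathbb{Z}^{\mathbb{N}}_0,\tau_S)^\wedge$ onto the product topology of $\prod_n\mathbb{R}=\mathbb{R}^{\mathbb{N}}$; this is a routine check, convergence on the left being convergence in each factor $(\mathbb{Z}^{\mathbb{N}}_0,\mathbf{u})^\wedge$, which, the Polish factor being finer than the compact one, is just coordinatewise convergence in $\mathbb{R}^{\mathbb{N}}$. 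Finally $\mathbf{n}(\mathbb{Z}^{\mathbb{N}}_0,\tau_S)=\bigl[\prod_n s_{S_n}((G_n)_d^\wedge)\bigr]^{\perp}=\bigoplus_n\bigl[s_{S_n}((G_n)_d^\wedge)\bigr]^{\perp}=\{0\}$ since each $(G_n,\tau_{S_n})$ is $MAP$, so $(\mathbb{Z}^{\mathbb{N}}_0,\tau_S)$ is $MAP$; it is sequential and complete because $S$ is countable; and $\bigoplus_nG_n\cong\mathbb{Z}^{\mathbb{N}}_0$. This yields Proposition~\ref{p3}.

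Everything thus reduces to the key ingredient: \emph{$\mathbb{R}$, with its usual topology, is the dual group of a complete $MAP$ topology on a free countable Abelian group which is generated by a countable set of convergent sequences.} By Theorem~\ref{t1} and Corollary~\ref{c02} this is the assertion that $\mathbb{R}$ embeds in $\mathbb{T}^{\mathbb{N}}$ as a \emph{dense} $\mathfrak{g}$-closed subgroup that is, in addition, the intersection of $s_{\mathbf{u}}(\mathbb{T}^{\mathbb{N}})$ over a \emph{countable} family of sequences $\mathbf{u}$. I would take $G_0$ to be the subgroup of $\mathbb{R}$ generated by a $\mathbb{Q}$-linearly independent sequence $(\alpha_j)$ whose $\mathbb{Z}$-span is dense in $\mathbb{R}$ (so $G_0$ is free of countable rank and dense in $\mathbb{R}$), with the subspace topology $\sigma$: then $\sigma$ is metrizable, hence sequential and $MAP$, and $(G_0,\sigma)^\wedge=\widehat{\mathbb{R}}=\mathbb{R}$ topologically, since the restriction map $\widehat{\mathbb{R}}\to\widehat{G_0}$ is a topological isomorphism (a dense subgroup of a metrizable group has the same dual). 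The remaining — and, I expect, \emph{main} — difficulty is to produce the countable $TS$-set $S_0$: concretely, to choose sequences $\mathbf{u}^{(j,j')}=\{q_k e_j-q'_k e_{j'}\}_k$ encoding simultaneous rational approximations of the ratios $\alpha_j/\alpha_{j'}$ (plus one further ``scaling'' sequence) so that $\bigcap_{j,j'}s_{\mathbf{u}^{(j,j')}}(\mathbb{T}^{\mathbb{N}})$ is \emph{exactly} the dense copy of $\mathbb{R}$, not a strictly larger subgroup, while remaining dense. The two constraints pull against each other: the naive ``solenoidal'' choice $\{m e_{m+1}-e_m\}$ cuts $\mathbb{T}^{\mathbb{N}}$ down to an honest copy of $\mathbb{R}$, but a non-dense one (so the resulting group is Hausdorff but not $MAP$), whereas the approximation sequences above preserve density but a priori leave a whole $\prod_j\mathbb{Z}$ worth of ambiguity in the coordinates; reconciling the two is the technical heart of the proof.
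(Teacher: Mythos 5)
Your reduction follows the paper's strategy in outline --- decompose $\mathbb{Z}^{\mathbb{N}}_0$ as a countable direct sum, put on each summand an $s$-topology generated by countably many convergent sequences whose dual is $\mathbb{R}$, and recover $\mathbb{R}^{\mathbb{N}}$ as the dual of the sum via the product formula --- but the argument does not close, and you say so yourself: the existence of the countable $TS$-set $S_0$ on a countable free Abelian group realizing $\mathbb{R}$ (with its usual topology) as the dual is left as the acknowledged ``technical heart'' and is never established. That existence statement is exactly what the paper imports from outside: by \cite[Proposition 2.9]{Ga2} there is a single $TB$-sequence $\mathbf{u}$ on $\mathbb{Z}^2$ with $(\mathbb{Z}^2,\mathbf{u})^\wedge\cong\mathbb{R}$, and the proof of Proposition \ref{p3} then consists of writing $\mathbb{Z}^{\mathbb{N}}_0\cong\sum_{n\in\omega}\mathbb{Z}^2$ and applying Proposition \ref{p21}. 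Without that ingredient (or a proof of your substitute for it), what you have is a correct reduction of the Proposition to an open sub-problem, not a proof.

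Two further points. First, the tension you identify --- density of $s_{S_0}\bigl((G_0)_d^\wedge\bigr)$ versus cutting it down to exactly a copy of $\mathbb{R}$ --- is resolved in the cited result by working in rank $2$ rather than countable rank: the irrational winding line is simultaneously dense in $\mathbb{T}^2$ and an honest copy of $\mathbb{R}$, and it is characterized by a single sequence of characters, so the ``$\prod_j\mathbb{Z}$ worth of ambiguity'' you worry about in $\mathbb{T}^{\mathbb{N}}$ never arises. Choosing each $G_n$ free of countable rank makes the problem strictly harder than necessary. Second, even granting your key ingredient, the appeal to the product machinery is not quite licit as you set it up: Proposition \ref{p21} attaches one $T$-sequence to each summand, whereas you attach a whole countable $TS$-set $S_n$ to each $G_n$, so you would need to re-run the box-topology identification $\tau_S=\tau^r$ (Proposition \ref{p20}) and Kaplan's theorem for summands carrying $s$-topologies generated by countably many sequences --- or simply take $|S_n|=1$, as the paper does, after which your roundabout route through the closed embedding of Theorem \ref{t4} is also unnecessary.
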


In the last section we pose some open questions.

\section{Duality} \label{sec1}

The following lemma will be used several times in the article:
\begin{lemma} {\rm \cite[Lemma 3.1]{DMT}} \label{l11}
{\it Let $G$ be an Abelian topological group and let $H\leq G^\wedge$. Then, for a sequence $\mathbf{u} =\{ u_n \}$ in $G$, one has $u_n \to 0$ in $(G, T_H)$ if and only if $H\leq s_\mathbf{u} (G^\wedge)$. }
\end{lemma}

The  following proposition connects the notions of $T$- and $TB$-sequences (for countably infinite $G$ see \cite{Ga1}).
\begin{pro} \label{p1}
{\it Let $\mathbf{u} =\{ u_n \}$ be a sequence in an Abelian group $G$. Then
\begin{enumerate}
\item[{\rm (i)}] $\mathbf{u}$ is a $TB$-sequence if and only if  it is a $T$-sequence and $(G, \mathbf{u})$ is $MAP$.
\item[{\rm (ii)}]  Let $\mathbf{u}$ be a $TB$-sequence and  let  $i_\mathbf{u} :G_d \to (G,\mathbf{u})$ and $j_\mathbf{u} : G_d \to (G, b\mathbf{u})$, $i_\mathbf{u}(g)=j_\mathbf{u} (g)=g$, be the natural continuous isomorphisms. Then $i_\mathbf{u}^\wedge \left((G, \mathbf{u})^\wedge\right) = j_\mathbf{u}^\wedge \left((G, b\mathbf{u})^\wedge\right) = s_\mathbf{u} (G_d^\wedge)$.
\item[{\rm (iii)}]  {\rm \cite{DMT}}  If $\mathbf{u}$ is a $TB$-sequence, then $\tau_{b\mathbf{u}} =T_{s_\mathbf{u} (G_d^\wedge)}$.
\end{enumerate} }
\end{pro}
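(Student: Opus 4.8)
The plan is to derive (i) and (ii) directly from the definitions, the main tools being Lemma~\ref{l11}, the Comfort--Ross description of precompact group topologies on $G$ (they are exactly the topologies $T_H$ with $H\le G_d^\wedge$, Hausdorff iff $H$ is dense in $G_d^\wedge$ \cite{CoR}), the maximality built into the definitions of $\tau_\mathbf{u}$ and $\tau_{b\mathbf{u}}$, and the elementary fact that a supremum of group topologies on a fixed group is again a group topology; part (iii) is the quoted theorem of \cite{DMT}, which one may additionally invoke to shorten (ii). The first observation is that, since $i_\mathbf{u}$ and $j_\mathbf{u}$ are the identity maps on the underlying set, their adjoints $i_\mathbf{u}^\wedge$ and $j_\mathbf{u}^\wedge$ are just the inclusions sending a continuous character of $(G,\mathbf{u})$, resp.\ of $(G,b\mathbf{u})$, to the same character viewed as an element of $\widehat{G_d}$; so $i_\mathbf{u}^\wedge\big((G,\mathbf{u})^\wedge\big)$ is precisely the set of $\chi\in\widehat{G_d}$ that are $\tau_\mathbf{u}$-continuous, and likewise for $j_\mathbf{u}^\wedge$.

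For (i), I would argue as follows. If $\mathbf{u}$ is a $TB$-sequence, pick a precompact Hausdorff group topology $\sigma$ on $G$ witnessing $u_n\to 0$; then $\sigma$ is in particular a Hausdorff group topology with $u_n\to 0$, so $\mathbf{u}$ is a $T$-sequence and $\sigma\subseteq\tau_\mathbf{u}$ by maximality of $\tau_\mathbf{u}$. Since $(G,\sigma)$ is precompact Hausdorff, its continuous characters separate the points of $G$, and each of them is also $\tau_\mathbf{u}$-continuous; hence $\mathbf{n}(G,\mathbf{u})=\{0\}$, i.e.\ $(G,\mathbf{u})$ is $MAP$. Conversely, if $\mathbf{u}$ is a $T$-sequence and $(G,\mathbf{u})$ is $MAP$, set $H:=\widehat{(G,\mathbf{u})}\le G_d^\wedge$; being $MAP$, $(G,\mathbf{u})$ has $H$ separating its points, so $T_H$ is a precompact Hausdorff group topology, and, as each $\chi\in H$ is $\tau_\mathbf{u}$-continuous, $\chi(u_n)\to 1$ for all $\chi\in H$, so $u_n\to 0$ in $T_H$ by Lemma~\ref{l11}. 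Thus $\mathbf{u}$ is a $TB$-sequence.

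For (ii), the inclusions "$\subseteq$" are trivial, since $u_n\to 0$ in $\tau_\mathbf{u}$ (resp.\ in $\tau_{b\mathbf{u}}$) forces $\chi(u_n)\to 1$ for every continuous character $\chi$. For the reverse inclusions I would take $\chi\in s_\mathbf{u}(G_d^\wedge)$; then $\langle\chi\rangle\le s_\mathbf{u}(G_d^\wedge)$, so $u_n\to 0$ in $T_{\langle\chi\rangle}$ by Lemma~\ref{l11}. The topology $\tau_\mathbf{u}\vee T_{\langle\chi\rangle}$ is then a Hausdorff group topology (being finer than $\tau_\mathbf{u}$) in which $u_n\to 0$; by maximality of $\tau_\mathbf{u}$ it coincides with $\tau_\mathbf{u}$, so $T_{\langle\chi\rangle}\subseteq\tau_\mathbf{u}$ and $\chi$ is $\tau_\mathbf{u}$-continuous. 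This yields $i_\mathbf{u}^\wedge\big((G,\mathbf{u})^\wedge\big)=s_\mathbf{u}(G_d^\wedge)$. For $j_\mathbf{u}$ one runs the same argument with $\tau_{b\mathbf{u}}$: writing $\tau_{b\mathbf{u}}=T_K$ with $K$ dense in $G_d^\wedge$, the topology $\tau_{b\mathbf{u}}\vee T_{\langle\chi\rangle}=T_{K+\langle\chi\rangle}$ is again precompact and Hausdorff (because $K+\langle\chi\rangle\supseteq K$ is dense) with $u_n\to 0$, hence equals $\tau_{b\mathbf{u}}$ by maximality, so $\chi$ is $\tau_{b\mathbf{u}}$-continuous. (Alternatively, granting (iii), $(G,b\mathbf{u})^\wedge$ is the character group of $\big(G,T_{s_\mathbf{u}(G_d^\wedge)}\big)$, which equals $s_\mathbf{u}(G_d^\wedge)$ by the standard fact that the continuous characters of $(G,T_H)$ are exactly $H$.)

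I expect the only genuinely delicate point to be the "$\supseteq$"/continuity direction of (ii): a priori nothing guarantees that a character lying in $s_\mathbf{u}(G_d^\wedge)$ is continuous for the \emph{finest} admissible topology $\tau_\mathbf{u}$ (resp.\ $\tau_{b\mathbf{u}}$), and this is exactly where forming the supremum with $T_{\langle\chi\rangle}$ and invoking the maximality in the definitions does the work. In (i) the corresponding subtlety is the implication "if $\mathbf{u}$ is a $TB$-sequence then $(G,\mathbf{u})$ is $MAP$", which is handled by passing to any witnessing precompact Hausdorff topology and using that precompact Hausdorff groups are $MAP$; note in particular that the topology $\tau_{b\mathbf{u}}$ itself is not needed for that direction.
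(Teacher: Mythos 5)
Your proposal is correct and uses essentially the same ingredients as the paper: Lemma~\ref{l11}, the Comfort--Ross correspondence between subgroups of $G_d^\wedge$ and precompact topologies, and the maximality of $\tau_{\mathbf{u}}$ and $\tau_{b\mathbf{u}}$. The only (harmless) difference is in (ii): the paper obtains $s_\mathbf{u}(G_d^\wedge)\subseteq j_\mathbf{u}^\wedge\left((G,b\mathbf{u})^\wedge\right)$ from the maximality of $\tau_{b\mathbf{u}}$ and then closes the chain $Y\subseteq H\subseteq s_\mathbf{u}(G_d^\wedge)\subseteq Y$, whereas you prove $s_\mathbf{u}(G_d^\wedge)\subseteq i_\mathbf{u}^\wedge\left((G,\mathbf{u})^\wedge\right)$ directly, character by character, via the join $\tau_\mathbf{u}\vee T_{\langle\chi\rangle}$.
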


\begin{proof}
Set $H=i_\mathbf{u}^\wedge \left( (G,\mathbf{u})^\wedge \right)$,  $Y=j_\mathbf{u}^\wedge \left( (G,b\mathbf{u})^\wedge \right)$ and $h:(G,\mathbf{u}) \to (G, b\mathbf{u})$, $h(g)=g$. Then $h$ is a continuous isomorphism and $j_\mathbf{u} =h\circ i_\mathbf{u}$. So $j_\mathbf{u}^\wedge = i_\mathbf{u}^\wedge \circ h^\wedge$ and $Y\subseteq H$.

(i) It is clear that, if a sequence $\mathbf{u}=\{ u_n \}$ is a $TB$-sequence, then it is a $T$-sequence and $(G, \mathbf{u})$ is $MAP$. Let us prove the converse assertion.  Let $x\in H$ and $x=i_\mathbf{u}^\wedge (\chi), \chi \in (G,\mathbf{u})^\wedge$. Then $(u_n, x)=(i_\mathbf{u} (u_n), \chi)  \to 1$. Thus $Y\subseteq H\subseteq s_\mathbf{u} (G_d^\wedge)$. Hence, by Lemma \ref{l11}, $u_n \to 0$ in $T_{H}$.  Since $i_\mathbf{u}^\wedge$ is injective and $(G,\mathbf{u})$ is $MAP$, the topology $T_{H}$ is Hausdorff. Since $T_H$ is precompact,  $\mathbf{u}$ is a $TB$-sequence.

(ii) We claim that $Y=H=s_\mathbf{u} (G_d^\wedge)$. Indeed, by  Lemma \ref{l11}, $u_n \to 0$ in $T_{s_\mathbf{u} (G_d^\wedge)}$. Thus the topology $\tau_{b\mathbf{u}}$ is finer than $T_{s_\mathbf{u} (G_d^\wedge)}$. Hence, by \cite[1.2 and 1.4]{CoR}, we have $s_\mathbf{u} (G_d^\wedge) \subseteq Y$. By item (i) of the proof, we obtain that $Y=H=s_\mathbf{u} (G_d^\wedge)$.

(iii)  follows from item (ii) and \cite[Theorem 1.2]{CoR}.
\end{proof}

{\it Proof of Theorem} \ref{t1}.
1) By definition, the natural inclusions $i_\mathbf{u} :G_d \to (G,\mathbf{u})$ and $t_\mathbf{u} : (G,\mathbf{u})\to (G, \tau_S)$, $i_\mathbf{u}(g)=t_\mathbf{u} (g)=g$, are continuous isomorphisms for every $\mathbf{u}\in S$, and $i_S =t_\mathbf{u} \circ i_\mathbf{u}$. By Proposition \ref{p1}(ii), $i_\mathbf{u}^\wedge \left( (G, \mathbf{u})^\wedge \right) = s_\mathbf{u} (G^\wedge_d )$. Hence $i_S^\wedge \left((G, \tau_S)^\wedge \right) \subseteq s_\mathbf{u} (G_d^\wedge)$ for every $\mathbf{u}\in S$. So $i_S^\wedge \left((G, \tau_S)^\wedge \right) \subseteq s_S (G_d^\wedge)$.

Conversely, let $x\in s_S (G_d^\wedge)$. By Proposition \ref{p1}(ii), $x\in i_\mathbf{u}^\wedge \left((G, \mathbf{u})^\wedge \right)$ for every $\mathbf{u}=\{ u_n\} \in S$. Thus, $x$ is an algebraic homomorphism from $(G, \tau_S)$ into $\mathbb{T}$ such that, by the definition of the topology $\tau_\mathbf{u}$, $(u_n , x)\to 1$  for every $\mathbf{u} \in S$. By \cite[Theorem 2.4]{Ga3}, $x$ is a continuous character of $(G, \tau_S)$. So $x\in i_S^\wedge \left((G, \tau_S)^\wedge \right)$.

2) By 1), algebraically we have
\[
\mathbf{n}(G, \tau_S) = \bigcap_{\chi \in (G, \tau_S)^\wedge } \ker \chi = \bigcap_{x\in s_S (G_d^\wedge)} \ker x = \left[ s_S (G_d^\wedge) \right]^\perp .\eqno \Box
\]

\begin{cor} \label{c11}
{\it Let $G$ be an infinite Abelian group and $S$ be an arbitrary set of sequences in $G$. Then the following statements are equivalent:
\begin{enumerate}
\item[{\rm 1)}] $S\in \mathcal{TBS}(G)$;
\item[{\rm 2)}] $S\in \mathcal{TS}(G)$ and $(G,\tau_S)$ is $MAP$;
\item[{\rm 3)}] $s_S (G_d^\wedge)$ is dense in $G_d^\wedge$.
\end{enumerate} }
\end{cor}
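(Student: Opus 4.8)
The plan is to prove the equivalence of the three statements by establishing the implications $1)\Rightarrow 2)\Rightarrow 3)\Rightarrow 1)$, using Theorem \ref{t1} and Proposition \ref{p1} as the main tools, together with the elementary duality facts about the topologies $T_H$ recalled in the introduction (in particular Lemma \ref{l11} and the results of \cite{CoR}).

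First I would handle $1)\Rightarrow 2)$. Suppose $S\in\mathcal{TBS}(G)$, witnessed by a precompact Hausdorff group topology $\sigma$ in which every sequence of $S$ converges to zero. Since $\sigma$ is in particular Hausdorff, $S\in\mathcal{TS}(G)$, and $\tau_S\supseteq\sigma$ wait — actually one needs $\tau_{bS}\supseteq\sigma$; but here I only need that $S\in\mathcal{TS}(G)$ and that $(G,\tau_S)$ is MAP. For the latter: every $\mathbf u\in S$ is then a $TB$-sequence, so by Proposition \ref{p1}(i) each $(G,\mathbf u)$ is MAP, i.e. $s_{\mathbf u}(G_d^\wedge)$ is dense in $G_d^\wedge$ (this density is exactly the MAP-ness of $(G,\mathbf u)$ via Theorem \ref{t01} or directly). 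Actually the cleanest route is: since $S$ admits a precompact Hausdorff topology with all sequences of $S$ converging to $0$, Lemma \ref{l11} applied to the corresponding dense subgroup $H\le G_d^\wedge$ gives $H\le s_S(G_d^\wedge)$, hence $s_S(G_d^\wedge)$ is dense; this already gives $1)\Rightarrow 3)$ directly. Then by Theorem \ref{t1}(2), $\mathbf n(G,\tau_S)=[s_S(G_d^\wedge)]^\perp$, and since $s_S(G_d^\wedge)$ is dense in $G_d^\wedge$ its annihilator in $G$ is trivial, so $(G,\tau_S)$ is MAP; this gives $3)\Rightarrow 2)$. So in fact it is most economical to prove $1)\Rightarrow 3)\Rightarrow 2)$ and then $2)\Rightarrow 1)$.

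For $2)\Rightarrow 1)$: assume $S\in\mathcal{TS}(G)$ and $(G,\tau_S)$ is MAP. Set $H=i_S^\wedge\big((G,\tau_S)^\wedge\big)=s_S(G_d^\wedge)$ by Theorem \ref{t1}(1). Since $(G,\tau_S)$ is MAP, $[s_S(G_d^\wedge)]^\perp=\mathbf n(G,\tau_S)=\{0\}$ by Theorem \ref{t1}(2), and a subgroup of $G_d^\wedge$ with trivial annihilator in $G$ is dense (this is standard Pontryagin duality for the compact group $G_d^\wedge$); hence $T_H$ is a Hausdorff totally bounded group topology on $G$. By Lemma \ref{l11}, for every $\mathbf u\in S$ we have $u_n\to 0$ in $(G,T_H)$ because $H=s_S(G_d^\wedge)\le s_{\mathbf u}(G_d^\wedge)$. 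Thus $T_H$ is a precompact Hausdorff group topology in which every sequence of $S$ converges to zero, so $S\in\mathcal{TBS}(G)$. This closes the cycle.

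I expect the only genuine subtlety to be the passage between "$s_S(G_d^\wedge)$ dense in $G_d^\wedge$'' and "$[s_S(G_d^\wedge)]^\perp=\{0\}$ in $G$'', i.e. the correct application of Pontryagin duality to the compact group $X=G_d^\wedge$ whose dual is $G$ (discrete): a subgroup $H\le X$ is dense iff its annihilator $H^\perp\le G$ is trivial. Everything else is a direct bookkeeping of the maps $i_S$, $j_S$ and their conjugates together with Lemma \ref{l11} and Theorem \ref{t1}; in particular no new topology needs to be constructed beyond $T_{s_S(G_d^\wedge)}$, which does all the work in the implication into $\mathcal{TBS}(G)$.
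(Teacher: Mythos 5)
Your proof is correct and uses the same ingredients as the paper's own proof (Lemma \ref{l11}, Theorem \ref{t1}, and the Comfort--Ross description of precompact Hausdorff topologies as $T_H$ with $H$ dense in $G_d^\wedge$); the paper merely traverses the cycle in the opposite order, taking $1)\Rightarrow 2)$ as trivial, deducing $2)\Rightarrow 3)$ from Theorem \ref{t1}(2), and proving $3)\Rightarrow 1)$ by exactly your $T_{s_S(G_d^\wedge)}$ argument. One organizational caveat: in your standalone implication $3)\Rightarrow 2)$ you invoke Theorem \ref{t1}(2), which presupposes $S\in\mathcal{TS}(G)$, and from 3) alone this is not yet known; you should first note that $H:=s_S(G_d^\wedge)$ dense makes $T_H$ a Hausdorff precompact topology in which, by Lemma \ref{l11}, every sequence of $S$ converges to zero, so that $S\in\mathcal{TBS}(G)\subseteq\mathcal{TS}(G)$ --- i.e.\ establish $3)\Rightarrow 1)$ first, which is the order the paper uses and costs you nothing since that argument already appears verbatim in your $2)\Rightarrow 1)$ step.
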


\begin{proof}
$1) \Rightarrow 2)$ is trivial.

$2) \Rightarrow 3)$ By Theorem \ref{t1}, $(G,\tau_S)$ is $MAP$ iff $s_S (G_d^\wedge)$ is dense in $G_d^\wedge$.

$3) \Rightarrow 1)$ For simplicity we set $H:= s_S (G_d^\wedge)$. By  Lemma \ref{l11}, every $\mathbf{u}\in S$ converges to zero in $T_H$. Since $H$ is dense in $G_d^\wedge$, by \cite[Theorem 1.9]{CoR}, $T_H$ is Hausdorff. So $S\in \mathcal{TBS}(G)$.
\end{proof}

{\it Proof of Theorem} \ref{t2}. Set $H:= s_S (G_d^\wedge)$ and $Y=j_S^\wedge \left( (G, \tau_{bS})^\wedge\right)$.
By Theorem \ref{t1}, $H =i_S^\wedge \left( (G, \tau_S)^\wedge\right)$. Since $\tau_S$ is finer than $\tau_{bS}$, the identity map $j: (G,\tau_S) \to (G, \tau_{bS})$ is a continuous isomorphism. Hence $j^\wedge : (G, \tau_{bS})^\wedge \to (G, \tau_{S})^\wedge$ is a continuous monomorphism. Since $j^\wedge_S = i^\wedge_S \circ j^\wedge$, we have $Y\subseteq H$. By  Lemma \ref{l11}, every $\mathbf{u}\in S$ converges to zero in $T_H$. Thus, $\tau_{bS}$ is finer than $T_H$. Hence, by \cite[1.2 and 1.4]{CoR}, $H\subseteq Y$. Thus, $H=Y$.   By \cite[Theorems 1.2 and 1.3]{CoR}, $\tau_{bS} = T_H$.
$\Box$

The following corollary generalizes \cite[Proposition 3.2]{DMT}.

\begin{cor} \label{c12}
{\it Let $G$ be an infinite Abelian group and $S\in \mathcal{TBS}(G)$. Then:
\begin{enumerate}
\item[{\rm 1)}] $\omega (G, \tau_{bS}) = \left| s_S (G_d^\wedge)\right|$;
\item[{\rm 2)}] $\tau_{bS}$ is metrizable iff $s_S (G_d^\wedge)$ is countable.
\end{enumerate} }
\end{cor}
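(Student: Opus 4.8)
The plan is to reduce everything to a statement about the precompact group $T_H$ attached to a dense subgroup $H$ of $G_d^\wedge$. Put $H:=s_S(G_d^\wedge)$. By Theorem~\ref{t2}, $\tau_{bS}=T_H$ and $j_S^\wedge$ identifies $(G,\tau_{bS})^\wedge$ with $H$ algebraically, so the corollary is exactly the assertion that, for a dense $H\le G_d^\wedge$ with $G$ infinite, the precompact group $(G,T_H)$ has weight $|H|$ (and $|H|$ is infinite). This is the verbatim generalization of the one-sequence statement \cite[Proposition 3.2]{DMT}, with $s_{\mathbf u}$ replaced throughout by $s_S$. Concretely I would proceed in three steps: (a) observe $H$ is infinite; (b) compute $\omega(G,\tau_{bS})=|H|$ by passing to the compact completion; (c) derive the metrizability clause from (a) and (b).

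For step (a): since $S\in\mathcal{TBS}(G)$, the topology $T_H$ is a Hausdorff group topology on the infinite group $G$. Let $K$ be the completion of $(G,\tau_{bS})=(G,T_H)$; by the Comfort--Ross theory of precompact topologies \cite{CoR} together with Theorem~\ref{t2}, $K$ is a compact Abelian group containing $G$ densely, and $\widehat K$ coincides algebraically with $(G,\tau_{bS})^\wedge=H$ (a continuous character of $G$ is uniformly continuous, hence extends to $K$). If $H$ were finite, then $K$ would be a finite, hence discrete, compact group, which is impossible as it contains the infinite subgroup $G$. Thus $H$ is infinite.

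For step (b): because $G$ is dense in $K$, taking closures in $K$ turns a neighbourhood base at $0$ of $(G,\tau_{bS})$ into a neighbourhood base at $0$ of $K$ of the same cardinality; since both groups are totally bounded, for each of them the weight equals the character at $0$, so $\omega(G,\tau_{bS})=\omega(K)$. Finally, for an infinite compact Abelian group one has $\omega(K)=|\widehat K|$: the sets $\{x\in K:\ |\chi(x)-1|<1/m\ \text{for all }\chi\text{ in a given finite subset of }\widehat K\}$ ($m\in\mathbb{N}$) form a base at $0$, giving $\omega(K)\le|\widehat K|$, while the characters form a uniformly discrete subset of $C(K)$ and hence number at most the density $\omega(K)$, giving the reverse inequality. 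Combining, $\omega(G,\tau_{bS})=|\widehat K|=|H|=\bigl|s_S(G_d^\wedge)\bigr|$, which is 1).

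For step (c): a totally bounded Hausdorff group is metrizable iff it is first countable iff it has a countable base at $0$, and by total boundedness this is equivalent to second countability, i.e. to $\omega=\aleph_0$. By 1) this happens precisely when $\bigl|s_S(G_d^\wedge)\bigr|=\aleph_0$; since $s_S(G_d^\wedge)$ is infinite by (a), this is exactly the countability of $s_S(G_d^\wedge)$, proving 2). The only ingredient that is not routine bookkeeping with Theorem~\ref{t2} and \cite{CoR} is the weight identity $\omega(K)=|\widehat K|$ for compact Abelian $K$ and its transfer to the dense subgroup $G$; this is the step I would expect to need the most care, although it is classical.
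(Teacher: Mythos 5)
Your proof is correct and follows essentially the same route as the paper: both reduce via Theorem~\ref{t2} to the identity $\tau_{bS}=T_H$ with $H=s_S(G_d^\wedge)$ and then invoke the weight formula $\omega(G,T_H)=|H|$ together with the metrizability criterion for precompact topologies. The only difference is that the paper simply cites these two facts from Comfort--Ross \cite{CoR} (and Corollary~\ref{c11} for density of $H$), whereas you reprove them by passing to the compact completion; that extra work is sound but not needed.
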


\begin{proof}
1) follows from Theorem \ref{t2} and the property $\omega (G, T_Y)= |Y|$ of the topology $T_Y$ generated by any subgroup $Y\leq G_d^\wedge$.

Put $H:= s_S (G_d^\wedge)$. By Corollary \ref{c11}, $H$ is a point-separating subgroup of $G_d^\wedge$. Thus 2) follows from  Theorem \ref{t2}  and \cite[Theorem 1.11]{CoR}.
\end{proof}

\begin{rmk}
Note that, in general, for $\tau_S$ the equality $\omega (G, \tau_{S}) = \left| s_S (G_d^\wedge)\right|$ is not fulfilled. Indeed, let $S=\{ \mathbf{u}\}$ contain only one $T$-sequence and let $S$ be such that $s_S (G_d^\wedge)$ is countably infinite and dense in $G_d^\wedge$ \cite{DiK}. Then $|s_S (G_d^\wedge)|= \aleph_0$, but $(G, \tau_{S})$ is not metrizable \cite{ZP1}, and hence $\omega (G, \tau_{S})> \aleph_0$. Now, let $(G,\tau)$ be a dense countable subgroup of a locally compact non-compact Abelian metrizable group $X$ with the induced topology and $S=S(G,\tau)$. By \cite[Theorem 1.13]{Ga3}, $\omega (G, \tau_{S})=\omega (G, \tau)= \aleph_0$, but $|s_S (G_d^\wedge)|= | X^\wedge |= \mathfrak{c}$.
\end{rmk}

As usual, the natural homomorphism from an Abelian topological group $G$ into its bidual group $G^{\wedge\wedge}$ is denoted by $\alpha$.
\begin{cor} \label{c13}
{\it Let $G$ be a countably infinite Abelian group and $S\in \mathcal{TBS}(G)$. If $s_S (G_d^\wedge)$  is countable, then $(G, \tau_{bS})^\wedge $ is discrete. In particular, $(G, \tau_{bS})$ is not reflexive.}
\end{cor}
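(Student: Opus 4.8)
The plan is to reduce the statement to the case of a single $TB$-sequence, where everything is already in place. Write $X:=G_d^\wedge$; as $G$ is countably infinite, $X$ is compact \emph{metrizable}, and $\widehat{X}=G$ by Pontryagin duality. Put $H:=s_S(G_d^\wedge)$. By Corollary~\ref{c11} the subgroup $H$ is dense in $X$, so (being dense in an infinite compact group) it is countably infinite; and by Theorem~\ref{t2} we have $\tau_{bS}=T_H$ while, via $j_S^\wedge$, the group $(G,\tau_{bS})^\wedge$ is algebraically $H$. Thus the task is to show that the compact-open topology which $H$ thereby carries is discrete.

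The main step is a passage to one sequence. Using that a countable subgroup of a compact metrizable Abelian group is characterized (see, e.g., \cite{DiK}), I would pick a sequence $\mathbf{u}=\{u_n\}$ in $\widehat{X}=G$ with $s_{\mathbf{u}}(X)=H$. Since $s_{\mathbf{u}}(X)=H$ is dense in $X$, Corollary~\ref{c11} applied to $S=\{\mathbf{u}\}$ shows that $\mathbf{u}$ is a $TB$-sequence, and Theorem~\ref{t2} (or Proposition~\ref{p1}(iii)) gives $\tau_{b\mathbf{u}}=T_{s_{\mathbf{u}}(X)}=T_H=\tau_{bS}$. Hence $(G,\tau_{bS})$ and $(G,b\mathbf{u})$ coincide, and it suffices to prove $(G,b\mathbf{u})^\wedge$ discrete. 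Now Corollary~\ref{c01} (with $S=\{\mathbf{u}\}$) says that the conjugate map $j^\wedge\colon (G,b\mathbf{u})^\wedge\to(G,\mathbf{u})^\wedge$ is a continuous isomorphism, and by Theorem~\ref{t01} we have $(G,\mathbf{u})^\wedge=H_{\mathbf{u}}$, which by \cite[Corollary~1]{Ga1} is a Polish group whose underlying set $s_{\mathbf{u}}(X)=H$ is countably infinite. A countably infinite Polish group is discrete (a non-empty countable Polish space is a Baire space, hence has an isolated point, and homogeneity forces every point to be isolated). So $(G,\mathbf{u})^\wedge$ is discrete, and, $j^\wedge$ being a continuous bijection onto a discrete group, $(G,\tau_{bS})^\wedge=(G,b\mathbf{u})^\wedge$ is discrete as well.

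For the final assertion: a discrete dual has compact bidual, so $(G,\tau_{bS})^{\wedge\wedge}$ is compact, whereas $\alpha\colon(G,\tau_{bS})\to(G,\tau_{bS})^{\wedge\wedge}$ is a continuous monomorphism from the countably infinite Hausdorff group $(G,\tau_{bS})$, which is not compact (by the Baire category theorem it would otherwise be discrete, hence finite); thus $\alpha$ cannot be a topological isomorphism and $(G,\tau_{bS})$ is not reflexive. One could instead start from the metrizability of $\tau_{bS}$ furnished by Corollary~\ref{c12}(2), but the route above seems shorter. The one point that is not mere bookkeeping with the results of this section is the reduction in the second paragraph — that the countable subgroup $H$ of the compact metrizable group $X$ is characterized by a single sequence; this is where I expect the real work (and the dependence on an outside result) to lie.
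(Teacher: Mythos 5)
Your proof is correct, but it follows a genuinely different route from the paper's. The paper argues via metrizability and determinedness: by Corollary~\ref{c12}(2) the topology $\tau_{bS}=T_{H}$, with $H:=s_S(G_d^\wedge)$ countable, is metrizable, so the completion $\overline{G}$ of $(G,\tau_{bS})$ is a compact metrizable group; since metrizable groups are determined \cite{Aus, Cha}, $(G,\tau_{bS})^\wedge$ is topologically isomorphic to the discrete group $\overline{G}^{\,\wedge}$, and non-reflexivity follows because the bidual is then an infinite compact (hence uncountable) group while $\alpha(G)$ is countable. You instead reduce to a single sequence: the Dikranjan--Kunen theorem gives a sequence $\mathbf{u}$ with $s_{\mathbf{u}}(G_d^\wedge)=H$, whence $\tau_{bS}=\tau_{b\mathbf{u}}$, and you then run essentially the argument of Corollary~\ref{c14} --- a continuous isomorphism onto $(G,\mathbf{u})^\wedge=H_{\mathbf{u}}$, which is countable and Polish, hence discrete by Baire category plus homogeneity. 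Both routes rest on one substantial external input (determinedness of metrizable groups there, characterizability of countable subgroups of compact metrizable groups here), and both are sound; your citation of \cite{DiK} for the reduction matches the paper's own use of that result in the Example at the end of Section~\ref{sec1}. Your version is somewhat longer but yields the extra information that $r_b(S)=1$ and that $(G,\tau_{bS})^\wedge\cong(G,\mathbf{u})^\wedge$ for a single $TB$-sequence; the paper's version is shorter and never needs $H$ to be characterized by one sequence.
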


\begin{proof}
By Corollary \ref{c12}, $\tau_{bS}$ is metrizable. Hence the completion $\overline{G}$ of $(G, \tau_{bS})$ is a metrizable compact group. Thus, $\overline{G}$ is determined \cite{Aus, Cha}. Hence $(G, \tau_{bS})^\wedge$ is topologically isomorphic to the discrete group $\overline{G}^\wedge$. In particular, $(G, \tau_{bS})^{\wedge\wedge}$ is an infinite compact group. Since $G$ is countable, $\alpha (G) \not= (G, \tau_{bS})^{\wedge\wedge}$. So $(G, \tau_{bS})$ is not reflexive.
\end{proof}

\begin{cor} \label{c14}
{\it Let $G$ be a countably infinite Abelian group, $\mathbf{u}$ be a $TB$-sequence and
$j : (G, \mathbf{u}) \to (G, b\mathbf{u})$ be the identity continuous isomorphism. If  $s_\mathbf{u} (G_d^\wedge)$ is countable, then $j^\wedge$ is a topological isomorphism.}
\end{cor}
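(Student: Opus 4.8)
The plan is to establish that $j^\wedge$ is a continuous algebraic isomorphism and that both $(G,b\mathbf{u})^\wedge$ and $(G,\mathbf{u})^\wedge$ are discrete; then $j^\wedge$, being a continuous bijective homomorphism between discrete groups, is automatically a topological isomorphism.

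\emph{Algebraic isomorphism and continuity.} Let $i_\mathbf{u}:G_d\to(G,\mathbf{u})$ and $j_\mathbf{u}:G_d\to(G,b\mathbf{u})$, $i_\mathbf{u}(g)=j_\mathbf{u}(g)=g$, be the natural continuous isomorphisms, so that $j_\mathbf{u}=j\circ i_\mathbf{u}$ and hence $j_\mathbf{u}^\wedge=i_\mathbf{u}^\wedge\circ j^\wedge$. By Proposition \ref{p1}(ii), $i_\mathbf{u}^\wedge\bigl((G,\mathbf{u})^\wedge\bigr)=j_\mathbf{u}^\wedge\bigl((G,b\mathbf{u})^\wedge\bigr)=s_\mathbf{u}(G_d^\wedge)$; since $i_\mathbf{u}^\wedge$ is injective (because $i_\mathbf{u}$ is surjective, hence has dense image), it follows that $j^\wedge\bigl((G,b\mathbf{u})^\wedge\bigr)=(G,\mathbf{u})^\wedge$. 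Moreover $j^\wedge$ is injective, again because $j$ is surjective, and $j^\wedge$ is continuous by the standard fact that the conjugate of a continuous homomorphism is continuous for the compact-open topologies (it carries compact sets to compact sets). Thus $j^\wedge$ is a continuous algebraic isomorphism.

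\emph{Discreteness of the two dual groups.} Put $S=\{\mathbf{u}\}$; since $\mathbf{u}$ is a $TB$-sequence, $S\in\mathcal{TBS}(G)$, and $\tau_{bS}=\tau_{b\mathbf{u}}$, $s_S(G_d^\wedge)=s_\mathbf{u}(G_d^\wedge)$. As $s_\mathbf{u}(G_d^\wedge)$ is countable by hypothesis, Corollary \ref{c13} shows that $(G,b\mathbf{u})^\wedge$ is discrete. For $(G,\mathbf{u})^\wedge$ I would use Theorem \ref{t01}: since $G$ is countable, $X=G_d^\wedge$ is a metrizable compact group, and $(G,\mathbf{u})^\wedge=H_\mathbf{u}$, that is, the set $s_\mathbf{u}(X)$ equipped with a Polish group topology. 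But $s_\mathbf{u}(X)$ is countable, so $H_\mathbf{u}$ is a countable Polish group, and a countable Polish group is discrete: by the Baire category theorem a nonempty countable completely metrizable space has an isolated point, and a topological group with an isolated point is discrete. Hence $(G,\mathbf{u})^\wedge$ is discrete as well.

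Combining the two steps, $j^\wedge$ is a continuous bijective homomorphism between the discrete groups $(G,b\mathbf{u})^\wedge$ and $(G,\mathbf{u})^\wedge$, hence a topological isomorphism, as required. The only point needing a little care is the deduction that a countable Polish group is discrete; should one wish to avoid the Baire-category argument, one can instead apply the open mapping theorem for Polish groups to the continuous surjection $j^\wedge$ from the (countable, hence Polish) discrete group $(G,b\mathbf{u})^\wedge$ onto the Polish group $(G,\mathbf{u})^\wedge=H_\mathbf{u}$, concluding that $j^\wedge$ is open and therefore a topological isomorphism.
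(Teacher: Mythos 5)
Your proof is correct and follows essentially the same route as the paper: establish that $j^\wedge$ is a continuous algebraic isomorphism, observe that $(G,\mathbf{u})^\wedge=H_\mathbf{u}$ is a countable Polish (hence, by Baire category, discrete) group, and conclude that both duals are discrete so that $j^\wedge$ is a topological isomorphism. The only cosmetic differences are that the paper cites Corollary \ref{c01} for the first step where you re-derive it from Proposition \ref{p1}(ii), and it deduces discreteness of $(G,b\mathbf{u})^\wedge$ from that of $(G,\mathbf{u})^\wedge$ via continuity of $j^\wedge$ rather than invoking Corollary \ref{c13}.
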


\begin{proof}
$j^\wedge$ is a continuous isomorphism by  Corollary \ref{c01}.  Since, by   Theorem \ref{t01}, $(G, \mathbf{u})^\wedge $ is complete and countable, it is discrete. Thus $(G, \tau_{bS})^\wedge$ is also discrete and $j^\wedge$ is a topological isomorphism. $\Box$
\end{proof}


{\it Proof of Theorem} \ref{t12}.
(i) follows from  Theorem \ref{t1}(1) and the definitions of $\mathfrak{g}$-closed subgroups.

(ii) It is clear that  $H$ is a $\mathfrak{g}$-closed dense subgroup of ${\rm cl} H$.  Put
\[
S:= \{ \mathbf{u}\in \left( ({\rm cl} H)^\wedge \right)^\mathbb{N} : \; H \leq s_\mathbf{u} ({\rm cl} H)\}.
\]
By   the definition of $\mathfrak{g}$-closed subgroups, $H=s_S ({\rm cl} H)$.  Since $H$ is dense in ${\rm cl} H$, $S\in \mathcal{TBS}(({\rm cl} H)^\wedge)$   by Corollary \ref{c11}. Now the assertion follows from Theorem \ref{t1}(1).
$\Box$

Let $G$ be a $MAP$ Abelian topological  group, $X=\widehat{G}$ and $\alpha$ be the natural including of $G$ into $G^{\wedge\wedge}$. Since $G$ is $MAP$, $\alpha$ is injective. The weak and weak$^\ast$ group topologies on $X$ we denote by $\tau_w$ and $\tau_{w^\ast}$ respectively, i.e.,  $\tau_w =\sigma(X,G)$ and $\tau_{w^\ast} =\sigma(X,G^{\wedge\wedge})$. The compact-open topology on $X$ is denoted by $\tau_{co}$. Then $\tau_w \subseteq \tau_{w^\ast} \subseteq \tau_{co}$.  Let $t: X_d \to (X, \tau_{co}) (=G^\wedge), t(x)=x,$ be the natural continuous isomorphism and $\mathfrak{b}:= t^\wedge$ be its conjugate continuous monomorphism. Set $bG:= X_d^\wedge$. It is well known that the group $bG$ with the continuous monomorphism $\mathfrak{b}\circ\alpha$ is the Bohr compactification of $G$ (although $\alpha$ need not be continuous, but $\mathfrak{b}\circ\alpha$ is always continuous since $(\mathfrak{b}\circ\alpha (g),x)=(\alpha(g),t(x))= (x,g)$ for every $g\in G$ and $x\in X_d$). We shall algebraically identify $G$ and  $G^{\wedge\wedge}$ with their images $\mathfrak{b}\circ\alpha (G)$ and $\mathfrak{b}\left( G^{\wedge\wedge} \right)$ respectively saying that they are subgroups of $bG$.
It is clear that
\begin{equation} \label{21}
\mathfrak{g}_{bG} (\mathfrak{b}\circ\alpha (G)) \subseteq \mathfrak{g}_{bG} (\mathfrak{b}\left( G^{\wedge\wedge})\right).
\end{equation}
\begin{pro} \label{p12}
Let $G$ be a $MAP$ Abelian topological  group and $X=\widehat{G}$. The following statements are equivalent:
\begin{enumerate}
\item[{\rm (i)}] $\mathbf{s}(X,\tau_{w}) = \mathbf{s}(X, \tau_{w^\ast})$;
\item[{\rm (ii)}] $\mathfrak{g}_{bG} (\mathfrak{b}\circ\alpha (G)) = \mathfrak{g}_{bG} (\mathfrak{b}\left( G^{\wedge\wedge})\right)$.
\end{enumerate}
In particular, if $G$ is reflexive, then (i) and (ii) are fulfilled.
\end{pro}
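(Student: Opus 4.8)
\emph{Proof proposal.} The plan is to translate both (i) and (ii) into a single statement about convergent sequences in $X$. The first point is that, taking $bG = X_d^\wedge$ (so that $\widehat{bG} = X_d$ canonically, $X_d$ being discrete, hence reflexive), both weak topologies on $X$ are of the form $T_H$ for the two distinguished dense subgroups of $bG$: from $(\mathfrak{b}\circ\alpha(g),x) = (x,g)$ for $g\in G,\ x\in X_d$, and from $\mathfrak{b} = t^\wedge$ (which gives $(\mathfrak{b}(\varphi),x) = (\varphi,t(x))$ for $\varphi\in G^{\wedge\wedge}$), one reads off $\tau_w = T_{\mathfrak{b}\circ\alpha(G)}$ and $\tau_{w^\ast} = T_{\mathfrak{b}(G^{\wedge\wedge})}$. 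Both subgroups are dense in $bG$ (the first is the image of $G$ in its Bohr compactification; the second contains it), so $\tau_w$ and $\tau_{w^\ast}$ are Hausdorff; hence $S(X,\tau_w),S(X,\tau_{w^\ast})\in\mathcal{TS}(X)$ and the $s$-refinements $\mathbf{s}(X,\tau_w) = (X,\tau_{S(X,\tau_w)})$, $\mathbf{s}(X,\tau_{w^\ast}) = (X,\tau_{S(X,\tau_{w^\ast})})$ are well defined.

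Next I would apply Lemma \ref{l11} with $G := X_d$ (so that $G^\wedge = bG$): a sequence $\mathbf{u}$ lies in $S(X,\tau_w)$ iff $\mathfrak{b}\circ\alpha(G)\leq s_\mathbf{u}(bG)$, and in $S(X,\tau_{w^\ast})$ iff $\mathfrak{b}(G^{\wedge\wedge})\leq s_\mathbf{u}(bG)$. Comparing with the definition of $\mathfrak{g}_{bG}$ and using $\widehat{bG} = X_d$, this gives $s_{S(X,\tau_w)}(bG) = \mathfrak{g}_{bG}(\mathfrak{b}\circ\alpha(G))$ and $s_{S(X,\tau_{w^\ast})}(bG) = \mathfrak{g}_{bG}(\mathfrak{b}(G^{\wedge\wedge}))$; by Theorem \ref{t1}(1) these groups are, algebraically, the dual groups of $\mathbf{s}(X,\tau_w)$ and $\mathbf{s}(X,\tau_{w^\ast})$. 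Thus (i)$\Rightarrow$(ii) is immediate: if the two $s$-refinements coincide as topological groups, their dual groups coincide, and hence $\mathfrak{g}_{bG}(\mathfrak{b}\circ\alpha(G)) = \mathfrak{g}_{bG}(\mathfrak{b}(G^{\wedge\wedge}))$.

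For (ii)$\Rightarrow$(i) I would use that every $s_\mathbf{u}(bG)$ is $\mathfrak{g}$-closed — immediate, since inserting the single sequence $\mathbf{u}$ into the defining intersection gives $\mathfrak{g}_{bG}(s_\mathbf{u}(bG))\subseteq s_\mathbf{u}(bG)$. Assuming (ii), let $\mathbf{u}\in S(X,\tau_w)$, that is $\mathfrak{b}\circ\alpha(G)\leq s_\mathbf{u}(bG)$; then $\mathfrak{g}_{bG}(\mathfrak{b}\circ\alpha(G))\leq s_\mathbf{u}(bG)$, so by (ii) $\mathfrak{b}(G^{\wedge\wedge})\leq\mathfrak{g}_{bG}(\mathfrak{b}(G^{\wedge\wedge}))\leq s_\mathbf{u}(bG)$, whence $\mathbf{u}\in S(X,\tau_{w^\ast})$. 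Since $\tau_w\subseteq\tau_{w^\ast}$ gives $S(X,\tau_{w^\ast})\subseteq S(X,\tau_w)$, we conclude $S(X,\tau_w) = S(X,\tau_{w^\ast})$, hence $\mathbf{s}(X,\tau_w) = \mathbf{s}(X,\tau_{w^\ast})$, which is (i). Lastly, if $G$ is reflexive then $\alpha(G) = G^{\wedge\wedge}$, so $\mathfrak{b}\circ\alpha(G) = \mathfrak{b}(G^{\wedge\wedge})$, (ii) holds trivially, and (i) follows.

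I do not expect a serious obstacle; the argument is essentially bookkeeping in Pontryagin duality. The crux is to recognize $\tau_w$ and $\tau_{w^\ast}$ as the $T_H$-topologies on $X$ for $H = \mathfrak{b}\circ\alpha(G)$ and $H = \mathfrak{b}(G^{\wedge\wedge})$ inside $bG = X_d^\wedge$, and then to notice that $\mathfrak{g}_{bG}(H)$ for such $H$ equals $s_{S(X,T_H)}(bG)$ — after which everything rests on the triviality that each $s_\mathbf{u}(bG)$ is $\mathfrak{g}$-closed. The one point demanding a little care is that $(X,\tau_w)$ and $(X,\tau_{w^\ast})$ are Hausdorff, so that the $s$-refinements are defined at all; this is exactly where density of $\mathfrak{b}\circ\alpha(G)$ in $bG$ is used.
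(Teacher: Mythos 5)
Your argument is correct and follows essentially the same route as the paper: both directions reduce to the observation that $H\leq s_{\mathbf u}(bG)$ (for $H=\mathfrak b\circ\alpha(G)$ or $H=\mathfrak b(G^{\wedge\wedge})$) is exactly the statement that $\mathbf u$ is a null sequence for the corresponding weak topology, so that (i) and (ii) are both equivalent to $S(X,\tau_w)=S(X,\tau_{w^\ast})$. Your packaging of this via $\tau_w=T_{\mathfrak b\circ\alpha(G)}$, $\tau_{w^\ast}=T_{\mathfrak b(G^{\wedge\wedge})}$ and Lemma~\ref{l11} (and the optional detour through Theorem~\ref{t1}) is only a reformulation of the paper's direct unwinding of $(\mathfrak b\circ\alpha(g),u_n)=(u_n,g)$.
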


\begin{proof}
(i)$\Rightarrow$(ii). By (\ref{21}), we have to show that $\mathfrak{g}_{bG} (\mathfrak{b}\circ\alpha (G)) \supseteq \mathfrak{g}_{bG} (\mathfrak{b}\left( G^{\wedge\wedge})\right).$ Let $\mathbf{u} =\{ u_n\}_{n\in\omega} \subset X$ be such that $\mathfrak{b}\circ\alpha (G) \subseteq s_\mathbf{u} (bG)$. This means that $(\mathfrak{b}\circ\alpha (g), u_n) = (u_n, g)\to 1$ for every $g\in G$, i.e., $\mathbf{u}\in S(X,\tau_{w})$. By hypothesis, $\mathbf{u}\in S(X, \tau_{w^\ast})$ either. Hence
\[
(\mathfrak{b}(\chi), u_n) = (\chi, u_n) \to 1 \mbox{ for every } \chi\in G^{\wedge\wedge},
\]
i.e., $\mathfrak{b}(G^{\wedge\wedge}) \subseteq s_\mathbf{u} (bG)$. So $\mathfrak{g}_{bG} (\mathfrak{b}\circ\alpha (G)) \supseteq \mathfrak{g}_{bG} (\mathfrak{b}\left( G^{\wedge\wedge})\right).$

(ii)$\Rightarrow$(i). Since $\tau_w \subseteq \tau_{w^\ast} $, we have to show only that if $\mathbf{u}=\{ u_n\}_{n\in\omega} \in S(X,\tau_{w})$, then also $\mathbf{u}\in S(X, \tau_{w^\ast})$. Assuming the converse we can find $\chi \in G^{\wedge\wedge}$ such that
\[
(\chi, u_n) \not\to 1, \mbox{ at } n\to\infty.
\]
Then $\mathfrak{b}(\chi) \not\in s_\mathbf{u} (bG)$. Thus $\mathfrak{b}(\chi) \not\in \mathfrak{g}_{bG} (\mathfrak{b}\circ\alpha (G))$. A contradiction.
\end{proof}

{\it Proof of Proposition} \ref{p01}.
Since $G$ is reflexive, $\tau_w = \tau_{w^\ast} $. By Proposition \ref{p12}, we have $\mathfrak{g}_{bG} (\mathfrak{b}\circ\alpha (G)) = \mathfrak{g}_{bG} (\mathfrak{b}\left( G^{\wedge\wedge})\right)$. By \cite[Theorem 2.4]{ChMPT}, the dual group of a separable metrizable Abelian group $G$ is  sequential. By \cite[Theorem 1.13]{Ga3}, $G^\wedge$ is a $s$-group. Hence, by Theorem \ref{t12}, $\mathfrak{g}_{bG} (\mathfrak{b}\circ\alpha (G)) = \mathfrak{b}\circ\alpha (G)$ and $\mathfrak{b}\circ\alpha (G)$ is a $\mathfrak{g}$-closed subgroup of $bG$.
$\Box$

Now we discuss the minimality of $|S|$ of $TS$-sets $S$ which generate the same topology.
\begin{defin}
{\it Let $G$ be an Abelian group.
\begin{enumerate}
\item[{\rm (1)}] If $S\in \mathcal{TS} (G)$, we put
\[
r_s (S)= \inf\left\{ |B| : \,  (G, \tau_B) \cong (G, \tau_S) \mbox{ and } B \in \mathcal{TS} (G) \right\},
\]
\[
r_s^\wedge (S)= \inf\left\{ |B| : \, s_B (G_d^\wedge) =s_S (G_d^\wedge) \mbox{ and } B \in \mathcal{TS} (G) \right\}.
\]
\item[{\rm (2)}] If $S\in \mathcal{TBS} (G)$, we put
\[
r_b (S)= \inf\left\{ |B| : \,  (G, \tau_{bB}) \cong (G, \tau_{bS}) \mbox{ and } B \in \mathcal{TBS} (G) \right\},
\]
\[
r_b^\wedge (S)= \inf\left\{ |B| : \, s_B (G_d^\wedge) =s_S (G_d^\wedge) \mbox{ and } B \in \mathcal{TBS} (G) \right\}.
\]
\end{enumerate} }
\end{defin}
\begin{rmk}
Let $(G,\tau)$ be a $s$-group and $\tau =\tau_S$ for some $S\in\mathcal{TS}(G)$. Then  the number $ r_s (S)$ coincides with the number $r_s (G,\tau)$ that is defined in \cite{Ga3}.
\end{rmk}
\begin{pro} \label{p11}
{\it Let $G$ be an infinite Abelian group.
\begin{enumerate}
\item[{\rm 1)}] If $S\in \mathcal{TS} (G)$, then $r_s^\wedge (S) \leq r_s (S)$.
\item[{\rm 2)}] If $S\in \mathcal{TBS} (G)$, then $r_s^\wedge (S) =r_b^\wedge (S)= r_b (S)$.
\item[{\rm 3)}] If $S\in \mathcal{TS} (G)$ is finite, then $r_s (S)=r_s^\wedge (S)=1$.
\end{enumerate} }
\end{pro}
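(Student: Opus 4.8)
The plan is to base everything on one mechanism: any topological isomorphism $\varphi\colon (G,\sigma)\to (G,\sigma')$ we encounter is induced by an abstract automorphism of $G$ (which I also denote $\varphi$), and this in turn induces topological automorphisms of $G_d$ and of $G_d^\wedge$; with the evident abuse of notation I write $\varphi$ and $\varphi^\wedge$ for all the induced maps. For any such $\varphi$ and any set $B$ of sequences in $G$, writing $\varphi(B):=\{\{\varphi(u_n)\}:\{u_n\}\in B\}$, I will use the transport rule
\[
s_{\varphi(B)}(G_d^\wedge)=(\varphi^\wedge)^{-1}\big(s_B(G_d^\wedge)\big),
\]
which follows at once from $(\varphi(u_n),\chi)=(u_n,\varphi^\wedge(\chi))$. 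For 1), take any $B\in\mathcal{TS}(G)$ admitting a topological isomorphism $\varphi\colon(G,\tau_B)\to(G,\tau_S)$. Then $\varphi(B)\in\mathcal{TS}(G)$, its sequences converging to $0$ in $\tau_S$, and $|\varphi(B)|=|B|$. Letting $i_B\colon G_d\to(G,\tau_B)$ and $i_S\colon G_d\to(G,\tau_S)$ be the natural continuous isomorphisms, I dualize the identity $\varphi\circ i_B=i_S\circ\varphi$ of maps $G_d\to(G,\tau_S)$ and, using that $\varphi^\wedge\colon(G,\tau_S)^\wedge\to(G,\tau_B)^\wedge$ is onto together with Theorem \ref{t1}(1) applied to both $B$ and $S$, obtain
\[
s_B(G_d^\wedge)=i_B^\wedge\big((G,\tau_B)^\wedge\big)=i_B^\wedge\big(\varphi^\wedge((G,\tau_S)^\wedge)\big)=\varphi^\wedge\big(i_S^\wedge((G,\tau_S)^\wedge)\big)=\varphi^\wedge\big(s_S(G_d^\wedge)\big).
\]
By the transport rule this yields $s_{\varphi(B)}(G_d^\wedge)=s_S(G_d^\wedge)$, hence $r_s^\wedge(S)\le|\varphi(B)|=|B|$; as $B$ was arbitrary, $r_s^\wedge(S)\le r_s(S)$.

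For 2) I would argue in four short steps. Since a precompact Hausdorff group topology is Hausdorff, $\mathcal{TBS}(G)\subseteq\mathcal{TS}(G)$, so the sets $B$ admissible for $r_s^\wedge(S)$ include those admissible for $r_b^\wedge(S)$, giving $r_s^\wedge(S)\le r_b^\wedge(S)$. Conversely, if $B\in\mathcal{TS}(G)$ satisfies $s_B(G_d^\wedge)=s_S(G_d^\wedge)$, then since $S\in\mathcal{TBS}(G)$ this common subgroup is dense in $G_d^\wedge$ by Corollary \ref{c11}, which in turn forces $B\in\mathcal{TBS}(G)$; hence the two families of admissible $B$ coincide and $r_s^\wedge(S)=r_b^\wedge(S)$. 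Next, if $B\in\mathcal{TBS}(G)$ satisfies $s_B(G_d^\wedge)=s_S(G_d^\wedge)$, then Theorem \ref{t2}(2) gives $\tau_{bB}=T_{s_B(G_d^\wedge)}=T_{s_S(G_d^\wedge)}=\tau_{bS}$, so $r_b(S)\le r_b^\wedge(S)$. Finally, $r_b^\wedge(S)\le r_b(S)$ is obtained exactly as in 1), replacing $i_B,i_S$ by the natural isomorphisms $j_B\colon G_d\to(G,\tau_{bB})$ and $j_S\colon G_d\to(G,\tau_{bS})$, Theorem \ref{t1}(1) by Theorem \ref{t2}(1), and observing that for a topological isomorphism $\varphi\colon(G,\tau_{bB})\to(G,\tau_{bS})$ the set $\varphi(B)$ still lies in $\mathcal{TBS}(G)$, its sequences converging to $0$ in the precompact Hausdorff topology $\tau_{bS}$. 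Combining the four steps gives $r_s^\wedge(S)=r_b^\wedge(S)=r_b(S)$.

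For 3), by 1) it suffices to prove $r_s(S)=1$, and since $r_s(S)\ge 1$ is clear I only need to exhibit one $T$-sequence $\mathbf v$ with $(G,\tau_{\{\mathbf v\}})=(G,\tau_S)$. Write $S=\{\mathbf u^1,\dots,\mathbf u^m\}$ with $\mathbf u^i=\{u^i_k\}_k$, and let $\mathbf v$ be the shuffle $u^1_0,\dots,u^m_0,u^1_1,\dots,u^m_1,u^1_2,\dots$. For every group topology $\tau$ on $G$, all the $\mathbf u^i$ converge to $0$ in $\tau$ if and only if $\mathbf v$ does: in one direction because each $\mathbf u^i$ is a subsequence of $\mathbf v$, and in the other because a shuffle of finitely many null sequences is null. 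Since $S\in\mathcal{TS}(G)$, some Hausdorff group topology makes all the $\mathbf u^i$, and therefore $\mathbf v$, converge to $0$; thus $\{\mathbf v\}\in\mathcal{TS}(G)$, and since $\tau_{\{\mathbf v\}}$ and $\tau_S$ are both the finest Hausdorff group topology satisfying these equivalent conditions, $\tau_{\{\mathbf v\}}=\tau_S$.

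I expect the only genuinely delicate point to be the transport step used in 1) and in the last part of 2): the topological isomorphism between $(G,\tau_B)$ and $(G,\tau_S)$ need not be the identity map, so a priori $s_B(G_d^\wedge)$ is merely isomorphic---not equal---to $s_S(G_d^\wedge)$, and one must apply the inducing automorphism $\varphi$ to $B$ before it qualifies as an admissible witness for $r_s^\wedge(S)$ (resp.\ $r_b^\wedge(S)$). Everything else is routine bookkeeping with Theorems \ref{t1} and \ref{t2}, Corollaries \ref{c11} and \ref{c01}, and the functoriality of $(\cdot)^\wedge$.
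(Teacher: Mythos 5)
Your proof is correct and follows the same overall route as the paper: part 1) via Theorem \ref{t1}(1), part 2) via Corollary \ref{c11} and Theorem \ref{t2}, and part 3) by reducing to $r_s(S)=1$. Two points where you go beyond the paper are worth noting. First, the paper's proofs of 1) and of $r_b(S)=r_b^\wedge(S)$ simply write $s_B(G_d^\wedge)=(G,\tau_B)^\wedge=(G,\tau_S)^\wedge=s_S(G_d^\wedge)$, silently treating the topological isomorphism $(G,\tau_B)\cong(G,\tau_S)$ as if it were the identity; your transport of $B$ through the inducing automorphism $\varphi$, with the identity $s_{\varphi(B)}(G_d^\wedge)=(\varphi^\wedge)^{-1}(s_B(G_d^\wedge))$ and the observation that $\varphi(B)$ is an admissible witness of the same cardinality, is exactly the repair needed to make the literal definitions (which require equality of subgroups of $G_d^\wedge$, not isomorphism) match, and it is a genuine improvement in rigor. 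Second, for 3) the paper outsources $r_s(S)=1$ to Proposition 2.6 of \cite{Ga3}, whereas your shuffle argument is self-contained and correct: a finite shuffle of null sequences is null in any group topology and each $\mathbf{u}^i$ is a subsequence of the shuffle, so the two finest admissible Hausdorff group topologies coincide. The rest of your argument (the four inequalities in 2), the use of density via Corollary \ref{c11} to upgrade $\mathcal{TS}$-witnesses to $\mathcal{TBS}$-witnesses, and the appeal to Theorem \ref{t2}(2) for $\tau_{bB}=\tau_{bS}$) matches the paper's reasoning.
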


\begin{proof}
1) Let $B \in \mathcal{TS} (G)$ be such that $(G, \tau_B) \cong (G, \tau_S)$. By Theorem \ref{t1}(1), algebraically,
\[
s_B (G_d^\wedge) =(G, \tau_B)^\wedge =(G, \tau_S)^\wedge =s_S (G_d^\wedge).
\]
So $| B| \geq r_s^\wedge  (S)$. Thus $r_s^\wedge (S) \leq r_s (S)$.

2) Let $S\in \mathcal{TBS} (G)$. By Corollary \ref{c11}, $s_S (G_d^\wedge)$ is dense in $G_d^\wedge$. Hence, if $s_B (G_d^\wedge)=s_S (G_d^\wedge)$ for $B\in \mathcal{TS} (G)$, then, by Corollary \ref{c11}, $B\in \mathcal{TBS} (G)$. Thus, $r_s^\wedge (S) =r_b^\wedge (S)$.

Let $B\in \mathcal{TBS} (G)$. By Theorem \ref{t2} and  \cite[Theorem 1.3]{CoR}, $s_B (G_d^\wedge)=s_S (G_d^\wedge)$ if and only if $\tau_{bB} =\tau_{bS}$. So $r_b (S) =r_b^\wedge (S)$.

3)  By Proposition 2.6 of \cite{Ga3}, $r_s (S)=1$ and the assertion follows from item 1).
\end{proof}

\begin{exa}
Let $(G,\tau)$ be a dense countably infinite subgroup of a compact infinite metrizable Abelian group with the induced topology. Thus $(G,\tau)$ is  a $s$-group. Set $S=S(G,\tau)$. Since $(G, \mathbf{u})$ is either discrete or non metrizable, by Proposition \ref{p11}(3), we have $r_s (S)\geq \aleph_0$. On the other hand, by Theorem \ref{t1}, algebraically, $s_{S}(G_d^\wedge) = (G,\tau)^\wedge$ is  a countable subgroup of $G_d^\wedge$. So, by \cite{DiK}, there exists a $TB$-sequence $\mathbf{u}$ in $G$ such that $s_{\mathbf{u}}(G_d^\wedge) =s_{S}(G_d^\wedge)$. Thus $r_s^\wedge (S)=1$ and hence $r_s (S) > r_s^\wedge (S)$. We do not know any characterization of those $s$-groups for which $r_s (S) = r_s^\wedge (S)$. $\Box$
\end{exa}

\section{Structure of Abelian $s$-groups} \label{sec2}

In the following proposition we describe all sequences converging to zero in $(G,\mathbf{u})$.
\begin{pro} \label{p31}
{\it Let $\mathbf{u}=\{ u_n \}$ be a $T$-sequence in an Abelian group $G$. A sequence $\mathbf{v}=\{ v_n \}$ converges to zero in $(G,\mathbf{u})$ if and only if there are  $m\geq 0$ and $n_0 \geq 0$ such that for every $n\geq n_0$ each member $v_n \not= 0$ can be represented in the form
\[
v_n = a^n_{1} u_{k_1^n} +\dots + a^n_{l_n} u_{k_{l_n}^n},
\]
where $k_1^n <\dots <k_{l_n}^n$, $|a^n_{1}| + \dots +|a^n_{l_n}| \leq m+1$ and  $k_1^n \to \infty$.}
\end{pro}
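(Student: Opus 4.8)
The plan is to use the explicit description of the topology $\tau_{\mathbf u}$ due to Protasov and Zelenyuk. Recall that a basis of neighborhoods of zero for $\tau_{\mathbf u}$ on $G=(G,\mathbf u)$ is given by the sets
\[
A(n_1,n_2,\dots) = \bigcup_{k=1}^\infty \bigl\{ g_1 + \dots + g_k : g_i \in \{0\}\cup\{\pm u_m : m\ge n_i\} \bigr\},
\]
where $(n_k)$ runs over all sequences of positive integers (this is the content of \cite{ZP1, ZP2}, and $(G,\mathbf u)$ is precisely $G$ with the finest group topology in which $u_n\to 0$). So the "only if" direction should follow by unwinding what it means for $\mathbf v$ to eventually lie in each $A(n_1,n_2,\dots)$, and the "if" direction should follow by checking that a sequence of the stated form is eventually inside every such basic neighborhood.

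First I would prove sufficiency. Suppose $\mathbf v$ has the stated form with parameters $m$ and $n_0$: for $n\ge n_0$, either $v_n=0$ or $v_n = a^n_1 u_{k^n_1}+\dots+a^n_{l_n}u_{k^n_{l_n}}$ with $k^n_1<\dots<k^n_{l_n}$, $\sum_j|a^n_j|\le m+1$, and $k^n_1\to\infty$. Fix a basic neighborhood $A(n_1,n_2,\dots)$; I may assume $n_1\le n_2\le\dots$. Since $k^n_1\to\infty$, choose $N\ge n_0$ so that $k^n_1\ge n_{m+1}$ for all $n\ge N$. Then for such $n$ with $v_n\ne 0$, the element $v_n$ is a sum of at most $m+1$ terms $\pm u_{k^n_j}$ (counted with multiplicity given by $|a^n_j|$), each with index $k^n_j\ge k^n_1\ge n_{m+1}\ge n_i$ for every $i\le m+1$; hence $v_n\in A(n_1,n_2,\dots)$. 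This shows $v_n\to 0$ in $(G,\mathbf u)$.

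Next, necessity. Assume $v_n\to 0$ in $(G,\mathbf u)$. The key step — and the main obstacle — is to extract the uniform bound $m+1$ on the coefficient sums; the index condition $k^n_1\to\infty$ will come comparatively cheaply once one has a representation at all. The idea is: for each fixed $m\ge 0$, the set $A_m := A(\underbrace{1,1,\dots,1}_{\ge m},1,1,\dots)$ of sums of at most finitely many $\pm u_j$ is all of $\langle\{u_j\}\rangle$, so one instead works with the neighborhoods that limit the "length". Concretely, consider for each $m$ the neighborhood $W_m:=A(1,\dots)$ engineered (via \cite{ZP1}) so that every element of $W_m$ whose canonical expression in the $u_j$'s uses only sufficiently large indices has coefficient-sum $\le m+1$; since $v_n\to 0$, for a suitable choice there is $n_0$ past which $v_n$ lies in the relevant set, forcing the bound. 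To make the indices large and increasing, I would use that $v_n\in A(n_1,n_2,\dots)$ eventually for the specific choice $n_i=i$: this yields a representation $v_n=g_1+\dots+g_{k}$ with $g_i\in\{0\}\cup\{\pm u_{m_i}: m_i\ge i\}$, so after collecting equal indices and discarding zeros we get $v_n=a^n_1 u_{k^n_1}+\dots+a^n_{l_n}u_{k^n_{l_n}}$ with $k^n_1<\dots<k^n_{l_n}$; and running this with $n_i=h(i)$ for faster and faster $h$ (or a diagonal argument over the countably many conditions) forces $k^n_1\to\infty$. Combining the length bound with the index growth gives exactly the asserted form.

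I expect the technical heart to be a careful bookkeeping lemma from \cite{ZP1}: controlling, for an element written as a word in $\{\pm u_j\}$ with large indices, its minimal coefficient-sum, so that membership in a single well-chosen basic neighborhood pins down $m$. Granting that, the rest is the routine unwinding sketched above.
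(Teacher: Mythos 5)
Your sufficiency argument is fine and is essentially what the paper means by ``the sufficiency is clear''. The genuine gap is in the necessity direction, specifically in how you propose to obtain the uniform bound $m+1$ on the coefficient sums. No basic neighborhood $A(n_1,n_2,\dots)$ of zero in $(G,\mathbf{u})$ bounds the number of summands: for any choice of $(n_i)$ and any $k$, an element such as $u_{n_1}+u_{n_2}+\dots+u_{n_k}$ belongs to $A(n_1,n_2,\dots)$, so elements of a single basic neighborhood can have arbitrarily large coefficient sum. Hence there is no ``well-chosen basic neighborhood'' whose membership ``pins down $m$'', and the bookkeeping lemma you hope to import from Protasov--Zelenyuk does not exist in that form. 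The paper gets the bound from an entirely different source: after reducing to the case $\langle\mathbf{u}\rangle=G$ (using that $\langle\mathbf{u}\rangle$ is open in $\tau_{\mathbf{u}}$, so $v_n$ eventually lies in it), it invokes the structural fact (Theorem 2.10 of \cite{Ga3}) that every compact subset of $(G,\mathbf{u})$ with $\langle\mathbf{u}\rangle=G$ is contained in some $A(m,0)$, the set of elements admitting a representation with coefficient sum at most $m+1$; applied to the compact set $\mathbf{v}\cup\{0\}$ this yields the uniform $m$ at one stroke. This compactness input is the missing idea in your sketch.

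A second, smaller gap: even granting representations with bounded coefficient sum and, separately, representations with large smallest index, your plan to ``combine the length bound with the index growth'' does not produce a single representation enjoying both properties. The paper avoids this by proving $k_1^n\to\infty$ for the bounded representation directly, by contradiction: if $k_1^n$ had a bounded subsequence, one passes repeatedly to subsequences along which $k_1^n=k_1$ and $a_1^n=a_1$ are constant, then $k_2^n=k_2$ and $a_2^n=a_2$, and so on; since the partial sums $|a_1|<|a_1|+|a_2|<\dots$ are bounded by $m+1$, after at most $m+1$ steps one extracts a constant nonzero subsequence of $\mathbf{v}$, contradicting $v_n\to 0$. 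You should replace the diagonal argument over the neighborhoods $A(h(1),h(2),\dots)$ by an argument of this kind carried out on the fixed bounded-coefficient representation.
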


\begin{proof}
If either $\mathbf{u}$ or $\mathbf{v}$ is trivial, the proposition is evident. Assume that $\mathbf{u}$ and $\mathbf{v}$ are non-trivial.
The sufficiency is clear. Let us prove the necessity.
Since the subgroup $\langle\mathbf{u}\rangle$ of $G$ is open in $\tau_\mathbf{u}$, there is $n_0$ such that $v_n \in \langle\mathbf{u}\rangle$ for every $n\geq n_0$. Thus, without loss of generality, we may assume that $\langle\mathbf{u}\rangle =G$. Since $\mathbf{v}\cup \{ 0\}$ is compact and $\langle\mathbf{u}\rangle =G$, by \cite[Theorem 2.10]{Ga3}, there is $m\geq 0$ such that $\mathbf{v}\subset A(m,0)$. So, if $v_n \not= 0$, then
\begin{equation} \label{e1}
v_n = a^n_{1} u_{k_1^n} +\dots + a^n_{l_n} u_{k_{l_n}^n}, \mbox{ where } k_1^n <\dots <k_{l_n}^n \mbox{ and } |a^n_{1}| + \dots +|a^n_{l_n}| \leq m+1.
\end{equation}
Also we may assume that for any fix $n$ every sum of terms of the form $a^n_{i} u_{k_i^n}$ in (\ref{e1}) is not equal to zero (in particular, $a^n_{i} u_{k_i^n}\not= 0$ for $i=1,\dots, l_n$).

Now we have to show that $k_1^n \to \infty$. Assuming the converse  and passing to a subsequence we may suppose that $k_1^n = k_1$, $a^n_{1} =a_1$ and $a^n_{k_1^n} u_{k_1^n}=a_1 u_{k_1}\not= 0$ for every $n$. So
\[
v_n = a_1 u_{k_1} +a^n_{2} u_{k_2^n} +\dots + a^n_{l_n} u_{k_{l_n}^n} = a_1 u_{k_1}+ w_n^1.
\]
If $k_2^n \to\infty$, we obtain that $w_n^1$ converges to zero. Hence $0\not= a_1 u_{k_1}= v_n - w_n^1 \to 0$. This is a contradiction. Thus, there is a bounded subsequence of $\{ k_2^n \}$. Passing to a subsequence  we may suppose that $k_2^n = k_2$, $a^n_{2} =a_2$ and $a^n_{2} u_{k_2^n}=a_2 u_{k_2}\not= 0$ for every $n$. So
\[
v_n = a_1 u_{k_1}+a_2 u_{k_2} +a^n_{3} u_{k_3^n} +\dots + a^n_{l_n} u_{k_{l_n}^n} = a_1 u_{k_1}+ a_2 u_{k_2}+ w_n^2.
\]
By hypothesis, $a_1 u_{k_1}+ a_2 u_{k_2} \not= 0$. And so on. Since
\[
0< |a_1 | <  |a_1 |+ |a_2 |< \dots \leq m+1,
\]
after at most $m+1$ steps, we obtain that there is a {\it fix } and {\it non-zero} subsequence of $\mathbf{v}$. Thus $v_n \not\to 0$. This contradiction shows that $k_1^n \to \infty$.
\end{proof}

The following theorem makes more precise Theorem 2.9 of \cite{Ga3}.
\begin{theorem} \label{t02}
Let $\mathbf{u}=\{ u_n \}$ be a $T$-sequence in an Abelian group $G$ such that $\langle\mathbf{u}\rangle =G$. Then $(G,\mathbf{u})$ is a quotient  group of  $(\mathbb{Z}^\mathbb{N}_0, \mathbf{e})$ under the sequence-covering homomorphism
\[
 \pi ((n_1, n_2,\dots, n_m, 0,\dots))= n_1 u_1 +n_2 u_2 +\dots +n_m u_m .
\]
\end{theorem}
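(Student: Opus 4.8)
The plan is to verify that $\pi$ is a continuous, surjective, sequence-covering homomorphism; once this is done, the assertion that $\pi$ is a topological quotient homomorphism (so that $(G,\mathbf{u})\cong(\mathbb{Z}^{\mathbb{N}}_0,\mathbf{e})/\ker\pi$ as topological groups) follows formally. Surjectivity is immediate from $\langle\mathbf{u}\rangle=G$, and $\pi$ is a homomorphism because $\mathbb{Z}^{\mathbb{N}}_0$ is free abelian on $\{e_n\}$ with $\pi(e_n)=u_n$. Two facts will be used repeatedly: $(G,\mathbf{u})$ is sequential, because $\{\mathbf{u}\}$ is a one-element (hence countable) $TS$-set with $\tau_{\{\mathbf{u}\}}=\tau_\mathbf{u}$; and $(\mathbb{Z}^{\mathbb{N}}_0,\mathbf{e})$ is sequential, because $\mathbf{e}$ is a $T$-sequence in $\mathbb{Z}^{\mathbb{N}}_0$, so $\{\mathbf{e}\}$ is likewise a countable $TS$-set.

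For continuity, since the domain is sequential and $\pi$ is a homomorphism, it suffices to show that $\pi(v_k)\to 0$ whenever $v_k\to 0$ in $(\mathbb{Z}^{\mathbb{N}}_0,\mathbf{e})$. Applying Proposition \ref{p31} to the $T$-sequence $\mathbf{e}$, we get $m\ge 0$ and $k_0$ such that for $k\ge k_0$ every nonzero $v_k$ equals $a_1^k e_{j_1^k}+\dots+a_{l_k}^k e_{j_{l_k}^k}$ with $j_1^k<\dots<j_{l_k}^k$, $|a_1^k|+\dots+|a_{l_k}^k|\le m+1$ and $j_1^k\to\infty$. Then $\pi(v_k)=a_1^k u_{j_1^k}+\dots+a_{l_k}^k u_{j_{l_k}^k}$ is of exactly the form whose sufficiency in Proposition \ref{p31} forces $\pi(v_k)\to 0$ in $(G,\mathbf{u})$. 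For sequence-covering, I would run this backwards: given $w_k\to 0$ in $(G,\mathbf{u})$, Proposition \ref{p31} yields representations $w_k=a_1^k u_{j_1^k}+\dots+a_{l_k}^k u_{j_{l_k}^k}$ with the same bounds for $k\ge k_0$; put $v_k:=a_1^k e_{j_1^k}+\dots+a_{l_k}^k e_{j_{l_k}^k}\in\mathbb{Z}^{\mathbb{N}}_0$ (so $v_k=0$ when $w_k=0$) and let $v_k$ be an arbitrary preimage of $w_k$ for the finitely many $k<k_0$. Then $\pi(v_k)=w_k$ for all $k$, and the sufficiency direction of Proposition \ref{p31} applied to $(\mathbb{Z}^{\mathbb{N}}_0,\mathbf{e})$ gives $v_k\to 0$.

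To conclude that $\pi$ is a quotient homomorphism, let $U\subseteq G$ have $\pi^{-1}(U)$ open; as $(G,\mathbf{u})$ is sequential it is enough to check that $U$ is sequentially open. If $w_k\to w\in U$, lift $w_k-w\to 0$ to some $v_k\to 0$ in $(\mathbb{Z}^{\mathbb{N}}_0,\mathbf{e})$ as above, pick $z\in\pi^{-1}(w)\subseteq\pi^{-1}(U)$, and observe $v_k+z\to z$; openness of $\pi^{-1}(U)$ then yields $v_k+z\in\pi^{-1}(U)$, i.e.\ $w_k\in U$, for all but finitely many $k$. I do not expect a genuine obstacle here: all the substance sits in Proposition \ref{p31}, and the only care needed is the routine bookkeeping with the exceptional initial segment $k<k_0$ and the zero terms, plus the two reductions to sequential continuity and sequential openness via sequentiality of the groups involved.
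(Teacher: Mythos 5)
Your proposal is correct, and the half of it that carries the real content --- the sequence-covering property --- is exactly the paper's argument: both you and the author take a null sequence in $(G,\mathbf{u})$, apply Proposition \ref{p31} to write its nonzero tail members as bounded combinations $a^k_1u_{j_1^k}+\dots+a^k_{l_k}u_{j_{l_k}^k}$ with $j_1^k\to\infty$, and lift them to $a^k_1e_{j_1^k}+\dots+a^k_{l_k}e_{j_{l_k}^k}$, which converge to zero by the sufficiency direction of the same proposition. (You are slightly more careful than the paper about the finitely many exceptional indices $k<k_0$ and about choosing arbitrary preimages there, which is a harmless but welcome touch.) Where you genuinely diverge is in the remaining claims: the paper simply cites Theorem 2.9 of \cite{Ga3} for the fact that $\pi$ is a continuous quotient homomorphism and proves only sequence-covering on top of that, whereas you rederive continuity from sequential continuity (via Proposition \ref{p31} applied to $\mathbf{e}$, though the cited \cite[Theorem 2.4]{Ga3} with $\pi(e_n)=u_n\to 0$ would do it in one line) and then deduce the quotient property from sequence-covering plus sequentiality of $(G,\mathbf{u})$, by checking that any $U$ with $\pi^{-1}(U)$ open is sequentially open. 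That last step is in effect a direct proof of the special case of the paper's later Theorem \ref{t07} ((i)$\Rightarrow$(ii)) that is needed here, and it buys you a self-contained argument whose only external inputs are Proposition \ref{p31} and the sequentiality of groups of the form $(G,\mathbf{u})$; the cost is some redundancy with machinery the paper already imports wholesale from \cite{Ga3}.
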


\begin{proof}
Taking into account  Theorem 2.9 of \cite{Ga3}, we have to show only  that $\pi$ is sequence-covering. Let $\mathbf{v} =\{ v_n \} \in S(G,\mathbf{u})$. By Proposition \ref{p31}, for some natural number $m$ we can represent every $v_n \not= 0$ in the form
\[
v_n = a^n_{1} u_{k_1^n} +\dots + a^n_{l_n} u_{k_{l_n}^n},
\]
where $k_1^n <\dots <k_{l_n}^n$, $|a^n_{1}| + \dots +|a^n_{k_{l_n}^n}| \leq m+1$ and $k_1^n \to \infty$.
 Set
\[
e'_n = a^n_{1} e_{k_1^n} +\dots + a^n_{l_n} e_{k_{l_n}^n} \mbox{ if } v_n \not= 0, \mbox{ and } e'_n =0 \mbox{ if } v_n =0.
\]
Then $e'_n \to 0$ in $(\mathbb{Z}^\mathbb{N}_0, \mathbf{e})$ and $\pi (e'_n) = v_n$.
\end{proof}

Let $\{ (G_i, \tau_i) \}_{i\in I}$, where $I$ is a non-empty set of indices, be a family of Hausdorff  topological groups. For every $i\in I$ fix $U_i \in \mathcal{U}_{G_i}$ and put
\[
 \sum_{i\in I} U_i :=\left\{ (g_i)_{i\in I} \in \sum_{i\in I} G_i : \; g_i \in U_i \mbox{ for  all } i \in I \right\}.
\]
Then the sets of the form $ \sum_{i\in I} U_i $, where $U_i \in \mathcal{U}_{G_i}$ for every $i\in I$, form a neighborhood basis at the unit of a Hausdorff group topology $\tau^r$ on $\sum_{i\in I} G_i$ that is called the rectangular (or box) topology.

Let $\mathbf{u}=\{ g_n \}$ be an arbitrary sequence in $S(G_i, \tau_i)$. Evidently, the sequence $j_i (\mathbf{u})$ converges to zero in $\tau^r$. Thus,  the set
$\bigcup_{i\in I} j_i \left( S(G_i, \tau_i)\right)$
is a $TS$-set of sequences in $\sum_{i\in I} G_i$. So, if $(G_i, \tau_i)$ is a $s$-group for all $i\in I$, then Definition \ref{d04} is correct. Moreover, we can prove the following:
\begin{pro}
{\it Let $G= \sum_{i\in I} G_i$, where $(G_i, \tau_i)$ is a $s$-group for every $i\in I$. Set $S:=\bigcup_{i\in I} j_i \left( S(G_i, \tau_i)\right)$. The topology $\tau_S$ on $G$ coincides with the finest Hausdorff group topology $\tau'$ on $G$ for which all inclusions $j_i$ are continuous.}
\end{pro}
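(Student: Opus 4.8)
The plan is to establish $\tau_S=\tau'$ by proving the two inclusions separately, noting at the outset that $\tau_S$ is defined and Hausdorff because $S=\bigcup_{i\in I}j_i\!\left(S(G_i,\tau_i)\right)\in\mathcal{TS}(G)$, as was observed just before the statement, and that $\tau_S$ is a group topology on $G=\sum_{i\in I}G_i$.

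First I would prove $\tau'\subseteq\tau_S$. The idea is simply that every member of $S$ is a null sequence of $(G,\tau')$: each $j_i\colon(G_i,\tau_i)\to(G,\tau')$ is continuous, hence carries every sequence converging to $0$ in $\tau_i$ to a sequence converging to $0$ in $\tau'$, and every member of $S$ has the form $j_i(\mathbf v)$ with $\mathbf v\in S(G_i,\tau_i)$. Since $\tau'$ is then a Hausdorff group topology on $G$ in which all members of $S$ converge to zero and $\tau_S$ is by definition the finest such topology, $\tau'\subseteq\tau_S$.

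For the reverse inclusion it suffices, by the very definition of $\tau'$, to check that $\tau_S$ itself makes every $j_i$ continuous (this incidentally reconfirms that $\tau'$ exists and equals $\tau_S$). Fix $i$ and let $\sigma_i$ be the initial group topology on $G_i$ pulled back from $(G,\tau_S)$ along $j_i$, i.e. the group topology with neighbourhood base at $0$ given by the sets $j_i^{-1}(U)$ for $U$ a neighbourhood of $0$ in $\tau_S$; it is a group topology because $j_i$ is a homomorphism and $\tau_S$ a group topology, and it is Hausdorff because $j_i$ is injective and $\tau_S$ is Hausdorff. For every $\mathbf v\in S(G_i,\tau_i)$ we have $j_i(\mathbf v)\in S$, so $j_i(\mathbf v)$ is null in $\tau_S$ and hence $\mathbf v$ is null in $\sigma_i$; thus $\sigma_i$ is a Hausdorff group topology on $G_i$ in which every member of $S(G_i,\tau_i)$ converges to zero. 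Here the hypothesis that $(G_i,\tau_i)$ is a $s$-group enters: writing $\tau_i=\tau_{S_i'}$ for some $S_i'\in\mathcal{TS}(G_i)$, each member of $S_i'$ is null in $\tau_i$, so $S_i'\subseteq S(G_i,\tau_i)$ and therefore $\tau_{S(G_i,\tau_i)}\subseteq\tau_{S_i'}=\tau_i$; combined with the always-valid inclusion $\tau_i\subseteq\tau_{S(G_i,\tau_i)}$ this gives $\tau_i=\tau_{S(G_i,\tau_i)}$, i.e. $\tau_i$ is itself the finest Hausdorff group topology on $G_i$ in which all members of $S(G_i,\tau_i)$ converge to zero. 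Comparing with $\sigma_i$ yields $\sigma_i\subseteq\tau_i$, which is exactly the continuity of $j_i\colon(G_i,\tau_i)\to(G,\tau_S)$, completing the proof.

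The first inclusion is routine; the substance is in the second, and there the real points are the two observations that the pullback $\sigma_i$ of $\tau_S$ along $j_i$ is a Hausdorff group topology whose null sequences include all of $S(G_i,\tau_i)$, and that a $s$-topology coincides with its own $s$-refinement $\tau_{S(G_i,\tau_i)}$. This last identity is the only place where being a $s$-group is used, and it is what allows the maximality built into the definition of $\tau_{S(G_i,\tau_i)}$ to bound $\sigma_i$ from above; everything else uses only the maximality in the definitions of $\tau_S$ and $\tau'$. (Alternatively one could derive the continuity of $j_i$ from the universal property of $\tau_{S(G_i,\tau_i)}$ recorded in \cite[Theorem 2.4]{Ga3}, but the pullback argument keeps the proof self-contained.)
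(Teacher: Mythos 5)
Your proof is correct and follows the same two-inclusion decomposition as the paper: $\tau'\subseteq\tau_S$ comes from the maximality built into the definition of $\tau_S$ once one notes that every member of $S$ is null in $\tau'$, and the reverse inclusion comes from showing that $\tau_S$ itself makes every $j_i$ continuous. The only substantive difference is in how that continuity is established. The paper simply invokes \cite[Theorem 2.4]{Ga3} (the universal property that a homomorphism out of a $s$-group is continuous as soon as it carries the defining null sequences to null sequences), whereas you reprove the special case you need: you pull $\tau_S$ back along the injective homomorphism $j_i$ to a Hausdorff group topology $\sigma_i$ on $G_i$ whose null sequences contain all of $S(G_i,\tau_i)$, observe that a $s$-topology coincides with its own $s$-refinement ($\tau_i=\tau_{S(G_i,\tau_i)}$), and conclude $\sigma_i\subseteq\tau_i$ by maximality. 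Your version is self-contained and makes explicit exactly where the $s$-group hypothesis is used; the paper's is shorter because the universal property is available as a cited result. Both arguments are sound, and your verification that $\tau_i=\tau_{S(G_i,\tau_i)}$ (via $S_i'\subseteq S(G_i,\tau_i)$ and the antitonicity of $S\mapsto\tau_S$) is complete.
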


\begin{proof}
Fix $i\in I$. By construction, for every $\{ u_n\} \in S(G_i, \tau_i)$, $j_i (u_n) \to e_G$ in $\tau_S$. By \cite[Theorem 2.4]{Ga3}, the inclusion $j_i$ is continuous. Thus, $\tau_S \subseteq \tau'$. Conversely, if $j_i$ is continuous, then $j_i \left( S(G_i, \tau_i)\right) \subset S(G,\tau')$. Hence $S\subseteq S(G,\tau')$ and $\tau' \subseteq \tau_S$ by the definition of $\tau_S$.
\end{proof}

\begin{theorem} \label{t41}
Let $(X,\tau)$ be an Abelian $s$-group. Set $I=S(X,\tau)$. For every $\mathbf{u}\in I$, let $p_\mathbf{u} (\langle\mathbf{u}\rangle , \mathbf{u})\to X, p_\mathbf{u} (g)=g$, be the natural including of $(\langle\mathbf{u}\rangle , \mathbf{u})$ into $X$. Then the natural homomorphism
\[
p: {s-\sum}_{\mathbf{u}\in S(X,\tau)} (\langle \mathbf{u}\rangle, \mathbf{u}) \to X, \; p\left( (x_\mathbf{u}) \right) = \sum_\mathbf{u} p_\mathbf{u} (x_\mathbf{u}) =\sum_\mathbf{u} x_\mathbf{u},
\]
 is a quotient sequence-covering map.
\end{theorem}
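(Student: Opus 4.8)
The plan is to check, in turn, that $p$ is a well-defined continuous surjective homomorphism, that it is sequence-covering, and finally that it is a quotient map, the last property being forced by the previous ones together with the hypothesis that $(X,\tau)$ is an $s$-group. Write $I=S(X,\tau)$ and let $G={s-\sum}_{\mathbf{u}\in I}(\langle\mathbf{u}\rangle,\mathbf{u})$ carry its topology $\tau^s$ from Definition \ref{d04}. A few preliminary remarks on the summands: for each $\mathbf{u}=\{u_n\}\in I$ the restriction of $\tau$ to $\langle\mathbf{u}\rangle$ is a Hausdorff group topology in which $u_n\to 0$, so $\mathbf{u}$ is a (possibly trivial) $T$-sequence in $\langle\mathbf{u}\rangle$ and $\tau|_{\langle\mathbf{u}\rangle}\subseteq\tau_{\mathbf{u}}$; in particular each inclusion $p_{\mathbf{u}}:(\langle\mathbf{u}\rangle,\mathbf{u})\to X$ is continuous. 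The map $p$ is a well-defined homomorphism since an element of $\sum_{\mathbf{u}\in I}\langle\mathbf{u}\rangle$ has only finitely many nonzero coordinates and $p\circ j_{\mathbf{u}}=p_{\mathbf{u}}$. For continuity, by \cite[Theorem 2.4]{Ga3} it is enough to verify that $p$ carries each of the sequences generating $\tau^s$, that is each $\{j_{\mathbf{u}}(v_n)\}_n$ with $\{v_n\}\in S(\langle\mathbf{u}\rangle,\mathbf{u})$, to a null sequence of $X$; and indeed $p(j_{\mathbf{u}}(v_n))=v_n=p_{\mathbf{u}}(v_n)\to 0$ because $p_{\mathbf{u}}$ is continuous. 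For surjectivity, given $x\in X$ the eventually-zero sequence $\mathbf{x}=(x,0,0,\dots)$ belongs to $I$ and $x\in\langle\mathbf{x}\rangle$, whence $x=p(j_{\mathbf{x}}(x))$.

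The heart of the argument is the sequence-covering property, and it becomes almost immediate once one observes that the index set of the $s$-sum is precisely $S(X,\tau)$. Let $\{y_n\}$ converge to zero in $(X,\tau)$, so $\mathbf{y}=\{y_n\}\in I$ and $(\langle\mathbf{y}\rangle,\mathbf{y})$ is one of the summands of $G$. By the very definition of the topology $\tau_{\mathbf{y}}$ the sequence $\mathbf{y}$ converges to zero in $(\langle\mathbf{y}\rangle,\mathbf{y})$, so $\{y_n\}\in S(\langle\mathbf{y}\rangle,\mathbf{y})$ and hence, by Definition \ref{d04}, $\{j_{\mathbf{y}}(y_n)\}_n$ converges to zero in $\tau^s$. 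Taking $z_n=j_{\mathbf{y}}(y_n)$ we obtain $z_n\to 0$ in $G$ and $p(z_n)=y_n$ for every $n$, so $p$ is sequence-covering.

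It remains to show that $p$ is a quotient map; let $\tau'$ be the quotient topology on $X$ induced by $p$. Since $p:(G,\tau^s)\to(X,\tau)$ is continuous we have $\tau\subseteq\tau'$, so $(X,\tau')$ is a Hausdorff topological group and it suffices to prove $\tau'\subseteq\tau$. Repeating the computation of the previous paragraph with the quotient map in place of $p$: for each $\mathbf{y}=\{y_n\}\in I$ the sequence $z_n=j_{\mathbf{y}}(y_n)$ tends to zero in $(G,\tau^s)$, hence $y_n=p(z_n)$ tends to zero in $(X,\tau')$. Thus $(X,\tau')$ is a Hausdorff group topology in which every sequence of $I$ converges to zero. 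Now use that $(X,\tau)$ is an $s$-group: $\tau=\tau_{S_0}$ for some $S_0\in\mathcal{TS}(X)$, and $S_0\subseteq S(X,\tau)=I$ since every sequence of $S_0$ converges to zero in $\tau_{S_0}=\tau$; consequently every sequence of $S_0$ converges to zero in $(X,\tau')$ as well, and since $\tau_{S_0}$ is the \emph{finest} Hausdorff group topology with that property we conclude $\tau'\subseteq\tau_{S_0}=\tau$. Therefore $\tau'=\tau$ and $p$ is a quotient map.

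I expect the only delicate points to be bookkeeping ones: confirming that each $\mathbf{u}\in I$ really is a (perhaps trivial) $T$-sequence in $\langle\mathbf{u}\rangle$ so that the $s$-sum over $I$ is defined, and applying \cite[Theorem 2.4]{Ga3} correctly to a homomorphism out of an $s$-sum. Conceptually there is no real obstacle: taking the index set to be all of $S(X,\tau)$ makes every null sequence of $X$ lift along $p$ coordinate-wise, which simultaneously gives sequence-covering and forces the quotient topology to contain no more open sets than $\tau$.
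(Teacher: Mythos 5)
Your proof is correct and follows essentially the same route as the paper: surjectivity via the sequence $(x,0,0,\dots)\in S(X,\tau)$, the tautological lift $z_n=j_{\mathbf{y}}(y_n)$ for sequence-covering, and continuity via \cite[Theorem 2.4]{Ga3}. The only difference is presentational: where the paper handles the quotient property by citing \cite[Theorem 1.11]{Ga3} (giving $G/\ker p\cong(X,\tau_{p(S)})$) together with \cite[Proposition 1.7]{Ga3} ($\tau=\tau_{S(X,\tau)}$ for an $s$-group), you unpack the same content directly, comparing the quotient topology $\tau'$ with $\tau=\tau_{S_0}$ via the maximality of $\tau_{S_0}$ — which is fine, granted the standard fact that $\tau'$ is a (Hausdorff, since it refines $\tau$) group topology.
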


\begin{proof}
 Set
\[
G:={s-\sum}_{\mathbf{u}\in S(X,\tau)} (\langle \mathbf{u}\rangle, \mathbf{u})  \mbox{ and }  S:= \bigcup_{\mathbf{u}\in S(X,\tau)} j_\mathbf{u} (S(\langle \mathbf{u}\rangle, \mathbf{u})) \in \mathcal{TS} (G).
\]
Since any element of $X$  can be regarded as the first element of some sequence $\mathbf{u}\in S(X,\tau)$, $p$ is surjective. By construction, $p$ is sequence-covering.

Let $\mathbf{v}=\{ v_n\} \in S$. By construction, $p(v_n) = v_n \to 0$ in $\tau$. Thus, by  \cite[Theorem 2.4]{Ga3}, $p$ is continuous. Set $H= \ker p$. By  \cite[Theorem 1.11]{Ga3}, $G/H \cong (X, \tau_{p(S)} )$. Since, by construction, $p(S)=S(X,\tau)$, we obtain that $G/H \cong (X,\tau)$ by Proposition 1.7 of \cite{Ga3}.
\end{proof}

To prove Theorem \ref{t06} we need the following proposition.
\begin{pro} \label{p42}
Let $\{ (X_i, \nu_i)\}_{i\in I}$ and $\{ (G_i, \tau_i)\}_{i\in I}$ be non-empty families of Abelian $s$-groups and let $\pi_i : G_i \to X_i$ be a quotient sequence-covering map for every $i\in I$. Set $X= {s-\sum}_{i\in I} X_i$, $G={s-\sum}_{i\in I} G_i$ and $\pi : G\to X, \pi ((g_i))=(\pi_i(g_i))$. Then $\pi$ is a quotient  map.
\end{pro}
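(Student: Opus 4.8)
\textbf{Proof proposal for Proposition \ref{p42}.}

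The plan is to verify the two halves of the assertion ``$\pi$ is a quotient map'' separately: first that $\pi$ is a continuous surjective homomorphism, and then that $\tau^s_X$ (the $s$-sum topology on $X$) is exactly the quotient topology induced by $\pi$ from the $s$-sum topology $\tau^s_G$ on $G$. Surjectivity is immediate since each $\pi_i$ is onto and $\pi$ acts coordinatewise on the direct sums. For continuity, I would use the universal property of $\tau^s_G$ recorded in Theorem \ref{t41}'s proof (via \cite[Theorem 2.4]{Ga3}): it suffices to check that $\pi$ carries every sequence in the defining set $\bigcup_{i\in I} j_i^G(S(G_i,\tau_i))$ to a null sequence in $X$. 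But if $\mathbf{u}=\{u_n\}\in S(G_i,\tau_i)$, then $\pi(j_i^G(u_n)) = j_i^X(\pi_i(u_n))$, and $\pi_i(u_n)\to 0$ in $X_i$ by continuity of $\pi_i$, so $j_i^X(\pi_i(u_n))\to 0$ in $\tau^s_X$ by the definition of the $s$-sum. Hence $\pi$ is continuous.

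Next I would identify the quotient. Write $H=\ker\pi$ and let $q:G\to G/H$ be the canonical map and $\bar\pi: G/H \to X$ the induced continuous isomorphism; the goal is to show $\bar\pi$ is a homeomorphism, i.e. $G/H$ carries the topology $\tau^s_X$. By \cite[Theorem 1.11]{Ga3} (as used in the proof of Theorem \ref{t41}), $G/H \cong (X, \tau_{q(S)})$ where $S=\bigcup_i j_i^G(S(G_i,\tau_i))$, so $G/H$ is itself a $s$-group whose topology is generated by the set of sequences $\pi(S) = \bigcup_i j_i^X(\pi_i(S(G_i,\tau_i)))$. Thus it remains to compare the two $s$-topologies on $X$: the one generated by $\bigcup_i j_i^X(\pi_i(S(G_i,\tau_i)))$ and the one generated by $\bigcup_i j_i^X(S(X_i,\nu_i))$. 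Since $\pi_i$ is continuous, $\pi_i(S(G_i,\tau_i)) \subseteq S(X_i,\nu_i)$, whence $\pi(S)\subseteq S_X:=\bigcup_i j_i^X(S(X_i,\nu_i))$ and therefore $\tau_{S_X} \subseteq \tau_{\pi(S)}$; that is, the quotient topology on $X$ is finer than $\tau^s_X$.

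For the reverse inclusion I would use the hypothesis that each $\pi_i$ is \emph{sequence-covering}, which is exactly what upgrades the set inclusion above to a surjection at the level of convergent sequences: given any $\mathbf{v}=\{v_n\}\in S(X_i,\nu_i)$, there is $\mathbf{w}=\{w_n\}\in S(G_i,\tau_i)$ with $\pi_i(w_n)=v_n$, so $j_i^X(v_n) = \pi(j_i^G(w_n))$ lies in $\pi(S(G,\tau^s_G))$ termwise. Consequently every generating sequence of $\tau^s_X$ is the $\pi$-image of a null sequence of $G$, hence is null in the quotient topology; applying \cite[Theorem 2.4]{Ga3} to the continuous isomorphism $(X,\tau^s_X)\to G/H$ inverse to $\bar\pi$ shows it is continuous, so $\tau_{\pi(S)}\subseteq \tau^s_X$. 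Combining the two inclusions gives $\tau_{\pi(S)}=\tau^s_X$, so $\bar\pi$ is a topological isomorphism and $\pi$ is a quotient map. The main obstacle is the reverse inclusion: a coordinatewise-continuous surjection between box-type direct sums need not be quotient, and it is precisely the sequence-covering property of each $\pi_i$ — together with the fact that the $s$-sum topology is \emph{determined} by its convergent sequences rather than by open sets — that makes the argument go through; I would be careful that a convergent sequence in $G=s\text{-}\sum G_i$ that is not already supported in a single summand is still handled, but Theorem \ref{t41}/\cite[Theorem 1.11]{Ga3} reduces everything to the generating sequences $j_i^G(S(G_i,\tau_i))$, which is what makes the lifting step legitimate.
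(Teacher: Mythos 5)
Your proposal is correct and follows essentially the same route as the paper: surjectivity is immediate, continuity comes from \cite[Theorem 2.4]{Ga3} applied to the generating null sequences, and the quotient identification comes from \cite[Theorem 1.11]{Ga3} together with the observation that sequence-covering upgrades $\pi_i(S(G_i,\tau_i))\subseteq S(X_i,\nu_i)$ to an equality, so that $\pi(S_G)=S_X$. The paper states this as a single set equality rather than as your two topology inclusions, but the content is identical.
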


\begin{proof}
It is clear that $\pi$ is surjective. Set
\[
S_X :=  \bigcup_{i\in I} j_i ( S(X_i, \nu_i)) \mbox{ and } S_G :=  \bigcup_{i\in I} j_i (S(G_i, \tau_i)).
\]
Since $\pi_i$ is sequence-covering, we have $\pi_i (S(G_i, \tau_i))=S(X_i, \nu_i)$. Hence $\pi (S_G) = S_X$. Thus, by  \cite[Theorem 2.4]{Ga3}, $\pi$ is continuous. By  \cite[Theorem 1.11]{Ga3}, $G/\ker (\pi) \cong (X, \tau_{\pi (S_G)})$. Hence $G/\ker (\pi) \cong X$ and $\pi$ is a quotient map.
\end{proof}

{\it Proof of Theorem} \ref{t06}.
Let $I=S(X,\tau)$.  For every $\mathbf{u} \in I$, put $G_\mathbf{u} = (\mathbb{Z}^\mathbb{N}_0, \mathbf{e})$, $X_\mathbf{u} =(\langle\mathbf{u}\rangle , \mathbf{u}) $ and
$\pi_\mathbf{u} ((n_1, \dots, n_m, 0,\dots))= n_1 u_1 +\dots +n_m u_m$. Let $p_\mathbf{u} (\langle\mathbf{u}\rangle , \mathbf{u})\to X, p_\mathbf{u} (g)=g$, be the natural including of $(\langle\mathbf{u}\rangle , \mathbf{u})$ into $X$. Then the theorem immediately follows from Theorems \ref{t02} and \ref{t41} and Proposition \ref{p42}.
$\Box$

 The following theorem is a natural counterpart of \cite[Theorem 4.1]{Siw}:
\begin{theorem} \label{t07}
Let $(X,\tau)$ be a non-trivial Hausdorff Abelian topological group. The following statements are equivalent:
\begin{enumerate}
\item[{\rm (i)}] $(X,\tau)$ is a $s$-group;
\item[{\rm (ii)}] every continuous sequence-covering homomorphism from an Abelian $s$-group onto $(X,\tau)$ is quotient.
\end{enumerate}
\end{theorem}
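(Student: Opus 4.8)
The plan is to prove the two implications separately, using the structure theorem (Theorem \ref{t06}) for the harder direction. The direction (ii)$\Rightarrow$(i) is the one that requires the machinery already developed, so I would start there. Assuming (ii), I want to exhibit \emph{some} continuous sequence-covering homomorphism onto $(X,\tau)$ whose domain is a known $s$-group; then (ii) forces it to be quotient, and since a quotient of an $s$-group is an $s$-group (the class $\mathbf{SA}$ is closed under quotients, as recalled in the introduction from \cite{Ga3}), we conclude that $(X,\tau)$ is an $s$-group. The natural candidate is built exactly as in Theorem \ref{t41}: for each $\mathbf{u}\in S(X,\tau)$ form $(\langle\mathbf{u}\rangle,\mathbf{u})$, take $G:={s-\sum}_{\mathbf{u}\in S(X,\tau)}(\langle\mathbf{u}\rangle,\mathbf{u})$, and let $p\colon G\to X$ be the summation map. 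The proof of Theorem \ref{t41} shows $p$ is surjective, continuous, and sequence-covering; but that proof actually \emph{also} shows it is quotient, so there is a subtlety: to keep the argument honest I would instead invoke Theorem \ref{t06} more directly, or simply note that the \emph{existence} of a continuous sequence-covering homomorphism from an $s$-group onto $(X,\tau)$ together with hypothesis (ii) yields that $(X,\tau)$ is a quotient of an $s$-group. Concretely: Theorem \ref{t06} is not available (it assumes (i)), so the clean route is to produce $p$ as above by the elementary facts that each $(\langle\mathbf{u}\rangle,\mathbf{u})$ is an $s$-group, that the $s$-sum of $s$-groups is an $s$-group (Definition \ref{d04}), and that $p$ is continuous and sequence-covering by the computation $p(S_G)=S(X,\tau)$ exactly as in Theorem \ref{t41}; hypothesis (ii) then makes $p$ quotient, and $X\cong G/\ker p$ is an $s$-group.

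For (i)$\Rightarrow$(ii), assume $(X,\tau)=\,$$(X,\tau_S)$ for some $S\in\mathcal{TS}(X)$, and let $f\colon (Y,\sigma)\to (X,\tau)$ be a continuous sequence-covering homomorphism from an Abelian $s$-group $(Y,\sigma)=(Y,\sigma_T)$, $T\in\mathcal{TS}(Y)$. I must show $f$ is a quotient map, i.e.\ that a subset $U\subseteq X$ with $f^{-1}(U)$ open in $\sigma$ is open in $\tau$. By \cite[Theorem 2.4]{Ga3}, $\tau_S=\tau$ is characterized by: a homomorphically-compatible topology is $\supseteq\tau_S$ iff every sequence of $S$ converges to $0$ in it. So it suffices to equip $X$ with the quotient topology $\tau_q$ induced by $f$ (a group topology, since $f$ is a surjective continuous homomorphism and $\ker f$ is a subgroup), observe $\tau_q\supseteq\tau$ trivially since $f$ is $\tau$-continuous, and then show $\tau_q\subseteq\tau$. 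For the latter it is enough, by the universal property defining $\tau_S$, to check that every $\mathbf{v}=\{v_n\}\in S$ converges to $0$ in $\tau_q$. Here is where sequence-covering enters: since $f$ is surjective we would lift, but more to the point we use that $\tau_q$ is the quotient topology, so a sequence converges to $0$ in $\tau_q$ iff... — and this is exactly the delicate point, because quotient maps need not reflect convergent sequences. The right move is the reverse: take $\mathbf{v}\in S$; since $(X,\tau)=(X,\tau_S)$ and $f$ is sequence-covering \emph{onto} $X$, and since $S(X,\tau)\supseteq S$, there is $\{y_n\}\to 0$ in $(Y,\sigma)$ with $f(y_n)=v_n$; hence $\{y_n\}\in S(Y,\sigma)$, so $f(y_n)=v_n\to 0$ in $\tau_q$ because $f$ is continuous from $\sigma$ to $\tau_q$ (the quotient topology makes $f$ continuous). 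Therefore every $\mathbf{v}\in S$ is null in $\tau_q$, whence $\tau_S\supseteq\tau_q$, i.e.\ $\tau\supseteq\tau_q$. Combined with $\tau_q\supseteq\tau$ we get $\tau_q=\tau$, which says precisely that $f$ is quotient.

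The main obstacle I anticipate is being careful about what ``sequence-covering onto $(X,\tau)$'' gives us versus what we need: sequence-covering lifts $\tau$-null sequences in $X$ to $\sigma$-null sequences in $Y$, and we must feed those exactly the sequences of $S$ — which is legitimate because $S\subseteq S(X,\tau)$ by the very definition of $\tau_S$ (all sequences of $S$ are $\tau$-null). A second, more pedestrian point is verifying that the quotient topology $\tau_q$ on $X=Y/\ker f$ transported across the algebraic isomorphism $X\cong Y/\ker f$ is genuinely a group topology and that $f$ is continuous into it; this is standard for surjective homomorphisms of topological groups. Finally, in the (ii)$\Rightarrow$(i) direction one must make sure the family $S(X,\tau)$ is non-empty and the $s$-sum construction is non-trivial, which holds since $(X,\tau)$ is non-trivial (and if $(X,\tau)$ were discrete, $S(X,\tau)$ consists of eventually-zero sequences and the statement is immediate, so one may assume non-discreteness to align with Theorem \ref{t06} if that route is preferred). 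With these caveats handled, both implications reduce to the characterization \cite[Theorem 2.4]{Ga3} of $\tau_S$ by its null sequences, plus closedness of $\mathbf{SA}$ under quotients.
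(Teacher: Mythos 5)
Your proposal is correct and follows essentially the same route as the paper: for (ii)$\Rightarrow$(i) you build exactly the summation map $p$ from the $s$-sum $G={s-\sum}_{\mathbf{u}\in S(X,\tau)}(\langle\mathbf{u}\rangle,\mathbf{u})$, observe it is continuous, surjective and sequence-covering without assuming (i), and let hypothesis (ii) together with quotient-stability of $\mathbf{SA}$ finish the job, just as the paper does. For (i)$\Rightarrow$(ii) the paper simply cites \cite[Theorem 1.11]{Ga3} (which identifies $G/\ker p$ with $(X,\tau_{p(S(G,\nu))})$) plus the identity $\tau=\tau_{S(X,\tau)}$, whereas you unpack the same content by comparing the quotient topology $\tau_q$ with $\tau$ directly via the maximality of $\tau_S$ and the lifting of the sequences of $S$ granted by sequence-covering; this is a correct, slightly more self-contained rendering of the same argument rather than a different one.
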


\begin{proof}
 (i) $\Rightarrow$ (ii). Let $p: G\to X$ be a sequence-covering continuous homomorphism from a $s$-group $(G, \nu)$ onto $X$. Set $H= \ker p$. We have to show that $p$ is quotient, i.e., $X\cong G/H$. Since $p$ is surjective, by  \cite[Theorem 1.11]{Ga3}, we have $G/H \cong (X, \tau_{p(S(G,\nu ))})$. By hypothesis and Proposition 1.7 of \cite{Ga3}, $p(S(G,\nu ))=S(X,\tau)$ and $\tau= \tau_{S(X,\tau)}$. Thus $G/H \cong X$.

(ii) $\Rightarrow$ (i). Let $I=S(X,\tau)$, $S:= \bigcup_{\mathbf{u}\in S(X,\tau)} j_\mathbf{u} (S(\langle \mathbf{u}\rangle, \mathbf{u})) \in \mathcal{TS} (G),$ $G:={s-\sum}_{\mathbf{u}\in S(X,\tau)} (\langle \mathbf{u}\rangle, \mathbf{u})$   and
\[
p: G \to X, \; p\left( (x_\mathbf{u}) \right) = \sum_\mathbf{u} p_\mathbf{u} (x_\mathbf{u}) =\sum_\mathbf{u} x_\mathbf{u},
\]
Since every $(\langle \mathbf{u}\rangle, \mathbf{u})$ is a $s$-group, $G$ is a $s$-group either. By  \cite[Theorem 2.4]{Ga3}, $p$ is continuous. Since  $p$ is sequence-covering, by hypothesis, $p$ is quotient. Thus $(X,\tau) \cong G/\ker p$. By Theorem  \cite[Theorem 1.11]{Ga3}, we also have $G/\ker p \cong (X, \tau_{p(S)})$. Thus $\tau = \tau_{p(S)}$ and $(X,\tau)$ is a $s$-group.
\end{proof}

\section{Countable $s$-sums of $s$-groups} \label{sec3}

We start from the description of the topology $\tau_S$ on $G$ for countably infinite $S\in \mathcal{TS}(G)$.

\begin{pro} \label{p20}
{\it Let  $S=\{ \mathbf{u}_n\}_{n\in\omega} \in \mathcal{TS}(G)$.  Then the family $\mathcal{U}$ of all the sets of the form
\[
\sum_n W_n = \bigcup_{n=0}^\infty (W_0 +W_1 +\dots + W_n ), \mbox{  where } 0\in W_n \in \tau_{\mathbf{u}_n}, \; n\geq 0,
\]
forms an open basis at $0$ of $\tau_S$.}
\end{pro}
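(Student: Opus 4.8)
The plan is to prove this by a double inclusion, showing that the candidate family $\mathcal{U}$ is a neighborhood basis at $0$ for a group topology on $G$, that every $\mathbf{u}_n$ converges to $0$ in that topology, and that it is the finest such (Hausdorff) topology, hence equals $\tau_S$. First I would recall from the discussion preceding Definition \ref{d04} (and from \cite{Ga3}) that $\tau_S \subseteq \bigwedge_{n} \tau_{\mathbf{u}_n}$; the real content is to identify this infimum concretely. Since each $\langle \mathbf{u}_n\rangle$ is open in $\tau_{\mathbf{u}_n}$, a set $W$ is a $0$-neighborhood in $\bigwedge_n \tau_{\mathbf{u}_n}$ iff $W$ is a $0$-neighborhood in every $\tau_{\mathbf{u}_n}$; the sets $\sum_n W_n$ are manifestly of this kind, so $\mathcal{U}$ consists of $0$-neighborhoods in the infimum topology. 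The crux is the converse: that the $\sum_n W_n$ are cofinal among all such neighborhoods, equivalently that they form a basis for a (group) topology in which the $\mathbf{u}_n$ converge, which must then coincide with $\tau_S$ by maximality.

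The key steps, in order, are: (1) Verify that $\mathcal{U}$ is a filter basis satisfying the standard axioms for a neighborhood basis at $0$ of a group topology: symmetry ($-\sum_n W_n = \sum_n(-W_n)$, using that each $\tau_{\mathbf{u}_n}$ is a group topology so $-W_n$ is again a $0$-neighborhood there), and the halving/absorption condition, namely for each $\sum_n W_n \in \mathcal{U}$ there is $\sum_n V_n \in \mathcal{U}$ with $\sum_n V_n + \sum_n V_n \subseteq \sum_n W_n$. For the last point I would pick $V_n$ with $V_n + V_n + V_n \subseteq W_n$ inside $\tau_{\mathbf{u}_n}$ and exploit the telescoping structure $\bigcup_N (V_0 + \cdots + V_N)$: a sum of an element of $V_0+\cdots+V_M$ and an element of $V_0+\cdots+V_K$ lands in $W_0 + \cdots + W_{\max(M,K)}$ after regrouping consecutive blocks, which is the main computational nuisance. (2) Check the topology $\tau'$ generated by $\mathcal{U}$ is Hausdorff: if $0 \neq g \in G$, since each $\tau_{\mathbf{u}_n}$ is Hausdorff pick $W_0 \in \tau_{\mathbf{u}_0}$ with $g \notin W_0$ and... actually here one must be slightly careful since $g$ could lie in $W_1 + \cdots$; instead, use that $\tau' \supseteq \tau_S$ which is already Hausdorff, or directly that $\langle \mathbf{u}_0 \rangle$-cosets separate. (3) Show each $\mathbf{u}_n$ converges to $0$ in $\tau'$: given $\sum_k W_k \in \mathcal{U}$, eventually $u^{(n)}_j \in W_n \subseteq \sum_k W_k$ since $\mathbf{u}_n \to 0$ in $\tau_{\mathbf{u}_n}$. (4) Conclude $\tau' \subseteq \tau_S$ by maximality of $\tau_S$ among Hausdorff group topologies in which all of $S$ converges to $0$; combined with $\tau_S \subseteq \bigwedge_n \tau_{\mathbf{u}_n} = \tau'$ (the latter equality being exactly steps (1) and the cofinality observation above), this gives $\tau' = \tau_S$.

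The main obstacle I anticipate is step (1), specifically proving the sub-additivity condition $\sum_n V_n + \sum_n V_n \subseteq \sum_n W_n$ cleanly. The infinite-union (telescoping) shape of the basic sets means one cannot just invoke the finite-product case; one has to track which "blocks" $V_0 + \cdots + V_m$ and $V_0 + \cdots + V_k$ an arbitrary pair of summands comes from, and then re-bracket the combined expression $v_0 + \cdots + v_m + v'_0 + \cdots + v'_k$ so that it is visibly an element of $W_0 + \cdots + W_{m+k}$ (or, with the $V_n+V_n+V_n \subseteq W_n$ choice, of $W_0 + \cdots + W_{\max(m,k)}$). This is elementary but requires care about commutativity (available, $G$ is Abelian) and about the indexing. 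A secondary subtlety is making sure $\mathcal{U}$ is closed under finite intersections as a filter basis — here one uses that $W_n \cap W'_n$ is again a $0$-neighborhood in $\tau_{\mathbf{u}_n}$, so $\sum_n (W_n \cap W'_n) \subseteq (\sum_n W_n) \cap (\sum_n W'_n)$, which is immediate. Once the group-topology axioms are in hand, steps (2)--(4) are short, and the identification with $\tau_S$ follows from the universal property of $\tau_S$ recorded in the excerpt.
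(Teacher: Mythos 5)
Your steps (1)--(3) match the paper's argument (it checks the five neighborhood-basis axioms of Hewitt--Ross, Theorem 4.5, to see that $\mathcal{U}$ generates a group topology $\tau'$, and notes that each $\mathbf{u}_n$ converges to $0$ in $\tau'$, whence $\tau'\subseteq\tau_S$; incidentally $V_n+V_n\subseteq W_n$ already suffices for the halving condition, since $\sum_n V_n+\sum_n V_n\subseteq\sum_n(V_n+V_n)$ after regrouping). But there is a genuine gap in the reverse inclusion $\tau_S\subseteq\tau'$, which you route through the asserted equality $\bigwedge_n\tau_{\mathbf{u}_n}=\tau'$. Only $\tau'\subseteq\bigwedge_n\tau_{\mathbf{u}_n}$ is clear; the converse is exactly the ``cofinality'' claim you flag as the crux, and your justification for it --- that being a basis for a group topology in which the $\mathbf{u}_n$ converge is ``equivalent'' to cofinality, and that maximality then forces coincidence with $\tau_S$ --- is circular: maximality of $\tau_S$ yields only $\tau'\subseteq\tau_S$, never the other inclusion. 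The equality itself is suspect: the set-theoretic infimum of infinitely many group topologies need not be a group topology, and a set $U$ that is open in every $\tau_{\mathbf{u}_n}$ separately need not contain any set of the form $W_0+W_1+\dots+W_n+\dots$, because openness of $U$ in $\tau_{\mathbf{u}_1}$ gives a neighborhood $V_w$ of $0$ with $w+V_w\subseteq U$ for each $w\in W_0$ individually, with no uniform choice over the (infinite) set $W_0$. Your earlier ``iff'' (``$W$ is a $0$-neighborhood in $\bigwedge_n\tau_{\mathbf{u}_n}$ iff it is one in every $\tau_{\mathbf{u}_n}$'') is the same error: only the forward implication holds.

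The missing ingredient is to use that $\tau_S$ is itself a group topology coarser than every $\tau_{\mathbf{u}_n}$. Given a $\tau_S$-neighborhood $U$ of $0$, choose recursively $\tau_S$-neighborhoods $U_n$ of $0$ with $U_0+U_0\subseteq U$ and $U_n+U_n\subseteq U_{n-1}$; since $\tau_S\subseteq\tau_{\mathbf{u}_n}$ (Lemma \ref{l21}), each $U_n$ contains some $W_n\in\tau_{\mathbf{u}_n}$ with $0\in W_n$, and then $W_0+\dots+W_n\subseteq U_0+\dots+U_n\subseteq U_0+U_0\subseteq U$ by telescoping, so $\sum_n W_n\subseteq U$. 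This is precisely the paper's step and is what replaces your appeal to the intersection topology.
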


Proposition \ref{p20} is an immediate corollary of the following two assertions.
\begin{lemma} \label{l21}
{\it Let $S\in \mathcal{TS}(G)$ for an Abelian group $G$ and $S=\cup_{i\in I} S_i$, where $I$ is a non-empty set of indices. Then $\tau_S \subseteq \bigwedge_i \tau_{S_i}$.}
\end{lemma}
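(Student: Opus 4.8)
The plan is to deduce the inclusion directly from the monotonicity of the assignment $S\mapsto\tau_S$ already recorded in the Introduction, namely: if $S'$ is a nonempty subset of $S$, then $S'\in\mathcal{TS}(G)$ and $\tau_S\subseteq\tau_{S'}$. So there is nothing genuinely new to prove; the lemma merely packages this monotonicity (applied to each $S_i\subseteq S$) together with the elementary fact that a topology lying below every member of a family lies below their meet. I would not expect any real obstacle.

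Concretely, I would first fix an arbitrary $i\in I$. Since $S_i\subseteq S\in\mathcal{TS}(G)$ and $S_i\neq\emptyset$, the topology $\tau_{S_i}$ is defined. Every member of $S_i$ is a member of $S$, hence converges to $0$ in $(G,\tau_S)$; thus $(G,\tau_S)$ is itself a Hausdorff group topology on $G$ in which all sequences of $S_i$ converge to $0$. By the defining maximality of $\tau_{S_i}$ among such topologies, $\tau_S\subseteq\tau_{S_i}$. Since $i\in I$ was arbitrary, $\tau_S$ is a common lower bound of the family $\{\tau_{S_i}\}_{i\in I}$, and as $\bigwedge_{i\in I}\tau_{S_i}$ denotes the greatest lower bound of this family in the lattice of group topologies on $G$, it follows at once that $\tau_S\subseteq\bigwedge_{i\in I}\tau_{S_i}$, which is the assertion.

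Two minor points deserve a sentence. First, if some $S_i$ is empty, then $\tau_{S_i}$ is the discrete topology, which is the top element and therefore irrelevant to the meet; since $S=\bigcup_{i\in I}S_i\neq\emptyset$ there is at least one nonempty $S_i$, so we may simply restrict attention to $\{i\in I:S_i\neq\emptyset\}$ without changing $\bigwedge_{i\in I}\tau_{S_i}$. Second, one does not need to decide whether $\bigwedge_{i\in I}\tau_{S_i}$ coincides with the set-theoretic intersection $\bigcap_{i\in I}\tau_{S_i}$: the argument only uses that it is the infimum, and $\tau_S$ being $\subseteq\tau_{S_i}$ for every $i$ already forces it to be $\subseteq$ that infimum.
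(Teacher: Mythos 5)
Your proof is correct and follows essentially the same route as the paper's: both deduce $\tau_S\subseteq\tau_{S_i}$ for each $i$ from the monotonicity of $S\mapsto\tau_S$ (i.e.\ the maximality in Definition \ref{d01}) and then pass to the meet. Your extra remarks on empty $S_i$ and on the meaning of $\bigwedge$ are harmless refinements of the same argument.
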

\begin{proof}
It is clear that $S_i \in \mathcal{TS}(G)$ and $\tau_{S} \subseteq \tau_{S_i}$ for every $i\in I$. Thus, if $U\in \tau_{S}$, then $U\in \tau_{S_i}$ for every $i\in I$. Hence $\tau_S \subseteq \bigwedge_i \tau_{S_i}$.
\end{proof}

\begin{pro} \label{p21}
{\it Let  $S=\cup_{n=0}^\infty S_n \in \mathcal{TS}(G)$.  Then the family $\mathcal{U}$ of all the sets of the form
\[
\sum_n W_n := \bigcup_{n=0}^\infty (W_0 +W_1 +\dots + W_n ), \mbox{  where } 0\in W_n \in \tau_{S_n}, \; n\geq 1,
\]
is an open basis at $0$ of $\tau_S$.}
\end{pro}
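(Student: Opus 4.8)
The plan is to produce a group topology $\tau'$ on the Abelian group $G$ having the family $\mathcal{U}$ as a base of open neighbourhoods of $0$, and then to identify $\tau'$ with $\tau_S$ by two coarseness comparisons. I recall that every $S_n\subseteq S$ again lies in $\mathcal{TS}(G)$, with associated $s$-topology $\tau_{S_n}$ (so all sequences of $S_n$ converge to $0$ in $\tau_{S_n}$), and that by Lemma~\ref{l21} one has $\tau_S\subseteq\tau_{S_n}$ for every $n$.

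\emph{Step 1: $\mathcal{U}$ is a neighbourhood base of a group topology $\tau'$.} Here I would check the standard axioms for a base of neighbourhoods of the identity of a group topology. That $0$ lies in every member, and that $\mathcal{U}$ is directed by reverse inclusion, is clear, since $W'_n\in\tau_{S_n}$ and $\sum_n(W_n\cap W'_n)\subseteq\bigl(\sum_n W_n\bigr)\cap\bigl(\sum_n W'_n\bigr)$; replacing each $W_n$ by $W_n\cap(-W_n)$ produces symmetric members. The crucial point is the halving condition: given $U=\sum_n W_n$, choose for each $n$ a set $V_n\in\tau_{S_n}$ with $0\in V_n$ and $V_n+V_n\subseteq W_n$ (possible because $\tau_{S_n}$ is a group topology); then $V_n\subseteq W_n$, and since every element of $\sum_n V_n$ lies in some $V_0+\dots+V_m$, for $x\in V_0+\dots+V_m$ and $y\in V_0+\dots+V_k$ with $m\le k$ one gets $x+y\in(V_0+V_0)+\dots+(V_m+V_m)+V_{m+1}+\dots+V_k\subseteq W_0+\dots+W_k\subseteq U$, so $\bigl(\sum_n V_n\bigr)+\bigl(\sum_n V_n\bigr)\subseteq U$. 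Hence $\mathcal{U}$ is the neighbourhood base at $0$ of a unique group topology $\tau'$; a similar estimate at an arbitrary point of $\sum_n W_n$ (decomposing it as $g_0+\dots+g_m$ with $g_i\in W_i$ and shrinking each $W_i$ around $g_i$) shows each such set is $\tau'$-open, so $\mathcal{U}$ consists of open sets.

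\emph{Step 2: $\tau'=\tau_S$.} For $\tau_S\subseteq\tau'$ it suffices to show every $\tau_S$-neighbourhood of $0$ contains a member of $\mathcal{U}$. Given such a neighbourhood, use that $\tau_S$ is a group topology to pick $\tau_S$-open symmetric sets $O_0\supseteq O_1\supseteq\cdots$ with $O_0$ inside it and $O_{n+1}+O_{n+1}\subseteq O_n$; an easy induction gives $O_1+\dots+O_m\subseteq O_0$ for all $m\ge1$. By Lemma~\ref{l21} each $W_n:=O_{n+1}$ belongs to $\tau_{S_n}$ and contains $0$, so $\sum_n W_n\in\mathcal{U}$ and $\sum_n W_n=\bigcup_m(O_1+\dots+O_{m+1})\subseteq O_0$. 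Conversely, any $\mathbf{u}=\{u_n\}\in S$ converges to $0$ in $\tau'$: choosing $k$ with $\mathbf{u}\in S_k$, for every $U=\sum_n W_n\in\mathcal{U}$ we have $u_n\in W_k\subseteq W_0+\dots+W_k\subseteq U$ eventually, since $u_n\to 0$ in $\tau_{S_k}$. Thus $\tau'$ is a group topology in which all sequences of $S$ converge to $0$; being finer than the Hausdorff topology $\tau_S$ it is itself Hausdorff, so $\tau'\subseteq\tau_S$ by the maximality property defining $\tau_S$. Combining the two inclusions, $\tau'=\tau_S$, which is the assertion.

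I expect the main obstacle to be Step~1 — confirming that $\mathcal{U}$ genuinely is the neighbourhood base of a group topology, in particular the telescoping bookkeeping behind the halving condition and the verification that the sets $\sum_n W_n$ are open. Once that is in place, Step~2 is just the two short comparisons above, using only Lemma~\ref{l21}, the fact that each $\tau_{S_n}$ is a group topology in which the sequences of $S_n$ converge, and the maximality property of $\tau_S$.
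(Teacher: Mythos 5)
Your proof is correct and follows essentially the same route as the paper: verify that $\mathcal{U}$ satisfies the neighbourhood-base axioms for a group topology (with the same halving/telescoping estimate $\sum_n V_n+\sum_n V_n\subseteq\sum_n(V_n+V_n)\subseteq\sum_n W_n$), then obtain $\tau'\subseteq\tau_S$ from the maximality of $\tau_S$ since each $\mathbf{u}\in S_k$ converges in $\tau'$, and $\tau_S\subseteq\tau'$ by nesting $\tau_S$-neighbourhoods $U_n+U_n\subseteq U_{n-1}$ and invoking Lemma~\ref{l21} to place them in $\tau_{S_n}$. Your explicit remark that Hausdorffness of $\tau'$ follows from $\tau_S\subseteq\tau'$ before applying maximality is a small point the paper leaves implicit.
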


\begin{proof} It is clear that $S_n \in \mathcal{TS}(G)$ for every $n$.

1. We claim that $\mathcal{U}$ {\it forms an open basis at zero of a Hausdorff group topology $\tau$ on $G$}. For this we have to check the five conditions of Theorem 4.5 of \cite{HR1}.

Let $\sum_n W_n \in \mathcal{U}$. To prove (i) choose $V_n \in \tau_{S_n}$ such that $V_n + V_n \subseteq W_n$. Then
\[
\sum_n V_n +\sum_n V_n \subseteq \sum_n (V_n +V_n) \subseteq \sum_n W_n.
\]

(ii) and (iv) are trivial.

To prove (iii) let $g=w_{n_1} +\dots + w_{n_m} \in \sum_n W_n$, where $w_{n_k}\not= 0, k=1,\dots, m$. If $n\not\in \{ n_1,\dots, n_m\}$, we set $V_n = W_n$. If $n=n_k$, we may choose an open neighborhood $V_{n_k}$ of zero in $\tau_{S_{n_k}}$ such that $w_{n_k} +V_{n_k} \subset W_{n_k}$. Then $g+\sum_n V_n \subseteq \sum_n W_n$.

To prove (v) let $\sum_n V_n, \sum_n W_n \in \mathcal{U}$. Set $F_n =W_n \cap V_n \in \tau_{S_{n}}$. Then $\sum_n F_n \subseteq \sum_n V_n \cap \sum_n W_n$.

2. We claim that $\tau \subseteq \tau_S$. By the definition of $\tau_S$ we have to show only that every $\mathbf{u} =\{ u_k \} \in S_n$ converges to zero in $\tau$. Let $\sum_n W_n \in \mathcal{U}$. Since $W_n \in \tau_{S_n}$, $u_k \in W_n \subset \sum_n W_n$ for all sufficiently large $k$. Thus
$\mathbf{u} $ converges to zero in $\tau$.

3. We claim that $\tau =\tau_S$. Let $U \in \tau_S$ be an arbitrary neighborhood of zero. Then there is a sequence of open neighborhoods of zero $U_n \in \tau_S, n\geq 0,$ such that $U_0 +U_0 \subseteq U$ and $U_n +U_n \subseteq U_{n-1}, n\geq 1$. By Lemma \ref{l21}, $\tau_S \subseteq \bigwedge_n \tau_{S_n}$. Hence for every $n\geq 0$ we may choose an open neighborhood $W_n$ of zero in $\tau_{S_n}$ such that $W_n \subset U_n$. It is clear that $\sum_{n} W_n \subseteq U$.
\end{proof}

To prove Theorem \ref{t4}, we need the following proposition.
\begin{pro} \label{p21}
{\it Let $\{ G_n \}_{n\in \omega}$ be a sequence of Abelian groups and let $\mathbf{u}_n$ be a $T$-sequence in $G_n$ for every $n\in \omega$. Set $G=\sum_{n\in \omega} G_n$ and $S=\{ j_n (\mathbf{u}_n) \}_{n\in \omega}$. Then $(G,\tau_S)$ is a complete sequential group, $\tau_S =\tau^r$ and
\[
(G,\tau_S)^\wedge = \prod_{n\in \omega} (G_n, \mathbf{u}_n)^\wedge .
\]
Moreover, if all $G_n$ are countably infinite, then $(G,\tau_S)^\wedge $ is a Polish group.}
\end{pro}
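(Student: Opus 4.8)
The plan is to begin by identifying $\tau_S$ with the rectangular topology. Let $\tau^r$ be the rectangular topology on $G=\sum_n G_n$ in which the $n$-th summand carries $\tau_{\mathbf{u}_n}$; recall from Section~\ref{sec2} that $\tau^r$ is a Hausdorff group topology in which every $j_n(\mathbf{u}_n)$ converges to zero, so $S\in\mathcal{TS}(G)$ and $\tau^r\subseteq\tau_S$. For the opposite inclusion I would establish the following ``coproduct'' property of $\tau^r$: for every Abelian topological group $H$, a homomorphism $f\colon(G,\tau^r)\to H$ is continuous if and only if each restriction $f\circ j_n\colon(G_n,\mathbf{u}_n)\to H$ is continuous. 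The forward direction is clear since every $j_n$ is $\tau^r$-continuous; for the converse, given a neighbourhood $U$ of $0$ in $H$ choose neighbourhoods $U_n$ of $0$ with $U_0+U_0\subseteq U$ and $U_n+U_n\subseteq U_{n-1}$, put $V_n=(f\circ j_n)^{-1}(U_n)$, and check by a telescoping-sum estimate, using that every element of $G$ has finite support, that $f$ carries the basic $\tau^r$-neighbourhood $\{g\in G:g_n\in V_n\text{ for all }n\}$ into $U$. Since $(G_n,\mathbf{u}_n)=(G_n,\tau_{\{\mathbf{u}_n\}})$ with the singleton $\{\mathbf{u}_n\}\in\mathcal{TS}(G_n)$, \cite[Theorem~2.4]{Ga3} says $f\circ j_n$ is continuous iff $f(j_n(\mathbf{u}_n))\to 0$ in $H$; combining, $f\colon(G,\tau^r)\to H$ is continuous iff $f$ sends every member of $S$ to a null sequence, which by \cite[Theorem~2.4]{Ga3} is exactly the criterion for $\tau_S$-continuity. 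Taking $f=\mathrm{id}_G$ and $H=(G,\tau_S)$, resp.\ $H=(G,\tau^r)$, yields $\tau_S=\tau^r$.

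Completeness and sequentiality of $(G,\tau_S)$ are then immediate, since $S$ is countable: this is the quoted fact from \cite{Ga3} that $(G,\tau_S)$ is complete and sequential for every countable $TS$-set of sequences. For the dual group, specialising the coproduct property to $H=\mathbb{T}$ shows that a character of $(G,\tau^r)$ is exactly a family $(\chi_n)_n$ with $\chi_n\in(G_n,\mathbf{u}_n)^\wedge$, via $\chi\longleftrightarrow(\chi\circ j_n)_n$; that is, $\chi\mapsto(\chi\circ j_n)_n$ is an isomorphism of abstract groups $(G,\tau_S)^\wedge\to\prod_n(G_n,\mathbf{u}_n)^\wedge$.

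It remains to see that this algebraic isomorphism is topological, i.e.\ that $\Phi\colon\prod_n(G_n,\mathbf{u}_n)^\wedge\to(G,\tau^r)^\wedge$ and its inverse $\Phi^{-1}=(j_n^\wedge)_n$ are both continuous. Continuity of $\Phi^{-1}$ is immediate: each $j_n\colon(G_n,\mathbf{u}_n)\to(G,\tau^r)$ is continuous, hence so is its conjugate $j_n^\wedge$. The substantive point, which I expect to be the main obstacle, is the description of the compact subsets of $(G,\tau^r)$, namely that \emph{every compact $K\subseteq(G,\tau^r)$ satisfies $K\subseteq\sum_{n\le N}G_n$ for some $N$}. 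To prove it, note first that $\sum_{n\le N}G_n$ is $\tau^r$-closed, its complement being $\bigcup_{n>N}\pi_n^{-1}(G_n\setminus\{0\})$, which is open since each coordinate projection $\pi_n$ is continuous and each $G_n$ is $T_1$. If $K$ had unbounded support, we could pick $x^{(i)}\in K$ with $m_i:=\max\operatorname{supp}x^{(i)}\to\infty$ (strictly increasing, after passing to a subsequence) and $x^{(i)}_{m_i}\neq 0$; since $K$, being a compact subset of the sequential Hausdorff group $(G,\tau_S)=(G,\tau^r)$, is sequentially compact, we may pass to a further subsequence with $x^{(i)}\to x^{\ast}\in K$; for large $i$ we have $m_i>\max\operatorname{supp}x^{\ast}$, so choosing (for each such $i$) a neighbourhood $V_{m_i}$ of $0$ in $G_{m_i}$ with $x^{(i)}_{m_i}\notin V_{m_i}$ — possible since $G_{m_i}$ is Hausdorff — and $V_m=G_m$ for the remaining indices, rectangular convergence $x^{(i)}\to x^{\ast}$ forces $x^{(i)}_{m_i}\in V_{m_i}$ for all large $i$, a contradiction. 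Granting this, continuity of $\Phi$ follows by a routine argument: for a compact $K\subseteq\sum_{n\le N}G_n$ and a neighbourhood $V$ of $1$ in $\mathbb{T}$, put $K_n=\pi_n(K)$ (compact in $G_n$) and pick a neighbourhood $V'$ of $1$ with $(V')^{N+1}\subseteq V$; then $\Phi$ maps $\{\psi\in(G_0,\mathbf{u}_0)^\wedge:\psi(K_0)\subseteq V'\}\times\cdots\times\{\psi:\psi(K_N)\subseteq V'\}\times\prod_{n>N}(G_n,\mathbf{u}_n)^\wedge$ into $\{\chi:\chi(K)\subseteq V\}$, because $\chi(g)=\prod_{n\le N}\chi_n(g_n)$ for $g\in K$. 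Hence $\Phi$ is a topological isomorphism, proving $(G,\tau_S)^\wedge=\prod_n(G_n,\mathbf{u}_n)^\wedge$.

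Finally, assume each $G_n$ is countably infinite. Then $G_{n,d}^\wedge$ is a metrizable compact Abelian group, so by Theorem~\ref{t01} together with \cite[Corollary~1]{Ga1} the group $(G_n,\mathbf{u}_n)^\wedge=\bigl(s_{\mathbf{u}_n}(G_{n,d}^\wedge)\bigr)_{\mathbf{u}_n}$ carries a Polish topology; a countable product of Polish groups is Polish; and by the previous step $(G,\tau_S)^\wedge$ is topologically isomorphic to $\prod_n(G_n,\mathbf{u}_n)^\wedge$, hence Polish, as claimed.
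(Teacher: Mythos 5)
Your proof is correct, and it follows the same skeleton as the paper's: identify $\tau_S$ with the rectangular topology $\tau^r$, compute the dual of the box sum as the product of the duals, and invoke Theorem \ref{t01} to get Polish factors. The difference is that you supply direct arguments for the two middle steps where the paper relies on citations. For $\tau_S=\tau^r$ the paper appeals to Proposition \ref{p20} (the $\sum_n W_n$ neighbourhood basis of $\tau_S$ for countable $S$, verified through the Hewitt--Ross axioms), whereas you derive it from a coproduct-style universal property of $\tau^r$ combined with \cite[Theorem 2.4]{Ga3}; your telescoping estimate is sound, and this route has the advantage of not having to reconcile neighbourhoods taken in $\tau_{j_n(\mathbf{u}_n)}$ on all of $G$ with neighbourhoods in the individual summands. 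For $(G,\tau^r)^\wedge=\prod_n(G_n,\mathbf{u}_n)^\wedge$ the paper simply cites Kaplan \cite{Kap}; you reprove the needed special case, the crux being that every compact subset of the box sum is supported in finitely many coordinates. Your derivation of that fact from sequential compactness is valid (compact subsets of a sequential Hausdorff space are sequentially compact), though it quietly leans on the already-established sequentiality of $(G,\tau_S)$; a finite-subcover argument with translates of a single rectangular neighbourhood would give bounded support without that detour. The net effect is a longer but essentially self-contained proof; nothing is missing.
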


\begin{proof}
$(G,\tau_S)$ is a complete sequential group by Theorem 2.7 of \cite{Ga3}. By Proposition \ref{p20},  $\tau_S =\tau^r$.  Thus, by \cite{Kap}, $(G,\tau_S)^\wedge = \prod_{n\in \omega} (G_n, \mathbf{u}_n)^\wedge$. If all $G_n$ are countably infinite, then, by  Theorem \ref{t01}, all $(G_n, \mathbf{u}_n)^\wedge$ are Polish. Hence $(G,\tau_S)^\wedge $ is a Polish group either.
\end{proof}

{\it Proof of Theorem} \ref{t4}.
Set $G' =\sum_{n\in \omega} G_n$, where $G_n =G$ for every $n\in\omega$, and $S' =\{ j_n (\mathbf{u}_n) \}_{n\in \omega}$. Then, by Proposition \ref{p21},
\[
(G',\tau_{S'})^\wedge = \prod_{n\in \omega} (G, \mathbf{u}_n)^\wedge
\]
 is a Polish group.

Set $p: (G',\tau_{S'}) \to (G,\tau_S), p((g_n)) = \sum_n g_n$. Since $p(j_n (\mathbf{u}_n) ) =\mathbf{u}_n$ converges to zero in $(G,\tau_S)$, $p$ is continuous by Theorem 2.4 of \cite{Ga3}. Set $H=\ker p$. Since $p(S')=S$, by \cite[Theorem 1.11]{Ga3}, $(G,\tau_S)\cong (G',\tau_{S'})/H$. Then the conjugate homomorphism $p^\wedge$ is a continuous isomorphism from $(G,\tau_S)^\wedge$ onto the annihilator $H^\perp$ of $H$ in $(G',\tau_{S'})^\wedge $.
By \cite[Theorem 2.7]{Ga3}, every compact subset of $(G,\tau_S)$ is contained in a compact subset $K_n$ of the form
\[
K_n := \left[ \bigcup_{i=0}^n \left( \mathbf{u}_i \cup (-\mathbf{u}_i) \right) \right] + \cdots + \left[ \bigcup_{i=0}^n \left( \mathbf{u}_i \cup (-\mathbf{u}_i) \right) \right]
\]
with $n+1$ summands. It is clear that a subset $K'_n$ of $G'$ of the form
\[
K'_n := \left[ \bigcup_{i=0}^n \left( j_i (\mathbf{u}_i) \cup (-j_i (\mathbf{u}_i)) \right) \right] + \cdots + \left[ \bigcup_{i=0}^n \left( j_i (\mathbf{u}_i) \cup (-j_i (\mathbf{u}_i)) \right) \right]
\]
with $n+1$ summands, is compact. Since  $p(K'_n )=K_n$ and $p$ is onto and continuous, $p$ is  compact-covering.  Thus, by \cite[Lemma 5.17]{Aus},  $p^\wedge$ is an embedding of $(G,\tau_S)^\wedge$ into the Polish group $(G',\tau_{S'})^\wedge $. So $(G,\tau_S)^\wedge \cong H^\perp$ is a Polish group.
$\Box$

{\it Proof of Proposition} \ref{p2}. For every $X_n$ there is a countably infinite Abelian group $G_n$ and a $TB$-sequence $\mathbf{u}_n$ in $G_n$ such that $(G_n , \mathbf{u}_n)^\wedge \cong X_n$ \cite{Ga2}. Set $G=\sum_{n\in \omega} G_n$ and $S=\{ j_n(\mathbf{u}_n) \}_{n\in \omega}$. Then the proposition follows from Proposition \ref{p21}.
$\Box$

{\it Proof of Proposition} \ref{p3}. By Proposition 2.9 of \cite{Ga2}, there is a $TB$-sequence $\mathbf{u}$ on $\mathbb{Z}^2$, such that $(\mathbb{Z}^2, \mathbf{u})^\wedge \cong \mathbb{R}$. Since
$\sum_{n\in \omega} \mathbb{Z}^2 \cong \mathbb{Z}^\mathbb{N}_0$, the assertion follows from Proposition \ref{p21}.
$\Box$

\section{Open questions}

We start from a question concerning Theorem \ref{t12}:
\begin{problem}
{\it Let $X$ be a compact Abelian group and $H$ be a $\mathfrak{g}$-closed non-dense subgroup of $X$. Is there $S\in \mathcal{TS}(\widehat{X})$ such that $i^\wedge_S \left( (\widehat{X}, \tau_S )^\wedge \right)=H$}?
\end{problem}

As it was noted, if $G$ is a {\it  separable} metrizable Abelian topological group, then, by \cite[Theorem 1.7]{ChMPT}, the dual group $G^\wedge$ is sequential. Hence  $G^\wedge$ is a $s$-group. The following questions are open:

\begin{problem}
{\it Let $G$ be a non-separable metrizable (resp. Fr\'{e}chet-Urysohn or sequential) Abelian group. Is $G^\wedge$ a $s$-group}?
\end{problem}
\begin{problem}
{\it Let $G$ be an  Abelian $s$-group. When $G^\wedge$ is a $s$-group}?
\end{problem}
\begin{problem}
{\it Let $G$ be an  Abelian (resp. metrizable,  Fr\'{e}chet-Urysohn, sequential or a $s$-group) topological group such that $G^\wedge$ is a (resp. metrizable,  Fr\'{e}chet-Urysohn or sequential) $s$-group. What we can say additionally about $G$ and  $G^\wedge$}?
\end{problem}
For example, if $G$ is metrizable and $G^\wedge$ is Fr\'{e}chet-Urysohn, then, by \cite[Theorem 2.2]{ChMPT}, $G^\wedge$ is locally compact metrizable group.

Let $G$ be an Abelian group and $S\in \mathcal{TBS}(G)$. Theorem \ref{t2} gives a complete description of the topology $\tau_{bS}$ on  $G$. On the other hand, we do not know any description of the topology on the dual group.
\begin{problem}
{\it Describe the topology of $(G, \tau_{bS})^\wedge$.}
\end{problem}
By Corollary \ref{c01}, $(G, \tau_{bS})^\wedge =(G, \tau_{S})^\wedge$ algebraically. It is natural to ask:
\begin{problem}
{\it  When the groups $(G, \tau_{bS})^\wedge$ and $(G, \tau_{S})^\wedge$ are topologically isomorphic}? {\it In particular, when $(G, \tau_{\mathbf{u}})^\wedge \cong (G, \tau_{b\mathbf{u}})^\wedge$}?
\end{problem}
Let $G$ be a countably infinite Abelian group and $S\in \mathcal{TBS}(G)$. By Corollary \ref{c13}, if $(G, \tau_{bS})^\wedge$ is countable, then $(G, \tau_{bS})$ is not reflexive.
\begin{problem}
{\it Is there a $S\in \mathcal{TBS}(G)$ for a {\rm  countably} infinite Abelian group $G$ such that  $(G, \tau_{bS})$ is reflexive}?
\end{problem}
Note that the positive answer to the last question will give the positive answer to the  following general problem:
\begin{problem}
{\rm (M. G. Tkachenko)} {\it Is there a reflexive precompact group topology on a {\rm  countably} infinite Abelian group (for example, on $\mathbb{Z}$)}?
\end{problem}

Taking into consideration of Corollary \ref{c04} and Proposition \ref{p01}, one can ask:
\begin{problem}
{\it Which $MAP$ Abelian groups are $\mathfrak{g}$-closed in its Bohr compactification}?
\end{problem}

\end{document}